\newtheorem{prop}{Proposition}
\newtheorem{lem}[prop]{Lemma}
\newtheorem{thm}[prop]{Theorem}
\theoremstyle{definition}
\newtheorem{defi}[prop]{Definition}
\newtheorem{rem}[prop]{Remark}
\newcommand{\fb}{\mathfrak{b}}
\newcommand{\fg}{\mathfrak{g}}
\newcommand{\fh}{\mathfrak{h}}
\newcommand{\fl}{\mathfrak{l}}
\newcommand{\fn}{\mathfrak{n}}
\newcommand{\fp}{\mathfrak{p}}
\newcommand{\fs}{\mathfrak{s}}
\newcommand{\calA}{\mathcal{A}}
\newcommand{\calC}{\mathcal{C}}
\newcommand{\calD}{\mathcal{D}}
\newcommand{\calE}{\mathcal{E}}
\newcommand{\calF}{\mathcal{F}}
\newcommand{\calH}{\mathcal{H}}
\newcommand{\calI}{\mathcal{I}}
\newcommand{\calL}{\mathcal{L}}
\newcommand{\calM}{\mathcal{M}}
\newcommand{\calN}{\mathcal{N}}
\newcommand{\calO}{\mathcal{O}}
\newcommand{\calR}{\mathcal{R}}
\newcommand{\calU}{\mathcal{U}}
\newcommand{\calV}{\mathcal{V}}
\newcommand{\bC}{\mathbb{C}}
\newcommand{\bD}{\mathbb{D}}
\newcommand{\bG}{\mathbb{G}}
\newcommand{\bL}{\mathbb{L}}
\newcommand{\bN}{\mathbb{N}}
\newcommand{\bO}{\mathbb{O}}
\newcommand{\bP}{\mathbb{P}}
\newcommand{\bR}{\mathbb{R}}
\newcommand{\bZ}{\mathbb{Z}}
\newcommand{\Lie}{{\rm Lie}}
\newcommand{\Rep}{{\rm Rep}}
\newcommand{\QCoh}{{\rm QCoh}}
\newcommand{\modu}{{\textrm -}{\rm mod}}
\newcommand{\itop}{{\it op}}
\newcommand{\conda}{(\ast)}
\DeclareMathOperator{\Supp}{Supp}
\DeclareMathOperator{\Ann}{Ann}
\DeclareMathOperator{\Sym}{Sym}
\DeclareMathOperator{\gr}{gr}
\DeclareMathOperator{\id}{id}
\DeclareMathOperator{\Gr}{Gr}
\DeclareMathOperator{\DR}{DR}
\DeclareMathOperator{\calHom}{\calH\!{\it om}}
\DeclareMathOperator{\Hom}{Hom}
\DeclareMathOperator{\totimes}{\overset{\#}{\otimes}}
\title[Radon transforms of twisted D-modules on partial flag varieties]{Radon transforms of twisted D-modules \\ on partial flag varieties}
\author{Kohei Yahiro}
\date{}
\address{Graduate School of Mathematical Sciences, The University of Tokyo, 3-8-1 Komaba Meguro-ku Tokyo 153-8914, Japan}
\email{yahiro@ms.u-tokyo.ac.jp}
\subjclass[2010]{17B10}
\keywords{Beilinson-Bernstein localization, Radon transform, partial flag variety}
\begin{document}
\setlength{\baselineskip}{13pt}

\begin{abstract}
In this paper we study intertwining functors for twisted D-modules on partial flag varieties and their relation to the representations of semisimple Lie algebras. We show that certain intertwining functors give equivalences of derived categories of twisted D-modules. This is a generalization of a result by Marastoni. We also show that these intertwining functors from dominant to antidominant direction are compatible with taking global sections. 
\end{abstract}

\maketitle

\section{Introduction}

In this paper we study the integral transforms for modules over sheaves of twisted differential operators on partial flag varieties which is called intertwining functors or Radon transforms and its relation to the representations of reductive Lie algebras over $\bC$. A sheaf of twisted differential operators (TDO) on a smooth algebraic variety is a sheaf of rings which is locally isomorphic to the sheaf of the differential operators. We call modules over a TDO twisted D-modules. Taking global sections induces a functor from the category of twisted D-modules on partial flag varieties $G/P$ to a category of representations of Lie algebra $\fg:=\Lie\ G$. Beilinson and Bernstein \cite{BeiBer81} established an equivalence of these categories. In \cite{BeiBer83} they defined intertwining functors for twisted D-modules on full flag varieties $G/B$. Intertwining functors are defined as integral transforms of twisted D-modules along the orbits of product of two flag varieties $G/B \times G/B$ and hence parametrized by the elements of the Weyl group. Intertwining functors change the parameter of TDO by an action of Weyl group. Beilinson and Bernstein proved that these intertwining functors are equivalences of categories. Marastoni \cite{Mar13} considered the integral transform between a partial flag variety $G/P$ and its opposite partial flag variety $G/P^{\itop}$ of D-modules and proved that it is an equivalence of derived categories. We extend the definition of intertwining functors to a certain class of orbits of the product of two partial flag varieties $G/P \times G/P'$ where $P$ and $P'$ are associate parabolic subgroups and prove that they give equivalences between derived categories of twisted D-modules (Theorem \ref{main1}). Mili\v ci\'c \cite{Mil94} studied the compatibility between intertwining functors and global section functors and proved that intertwining functors in one direction are compatible with global section functors. We extend his result to the intertwining functors defined in this paper (Theorem \ref{main2}). 

Let us now explain the preceding results, related results and our results in more details. 

Let $G$ be a connected reductive algebraic group over $\bC$, $B$ be its Borel subgroup and $H$ be a Cartan subgroup contained in $B$. We denote their Lie algebras by $\fg$, $\fb$ and $\fh$. We denote by $\Pi$ the set of simple roots and by $\rho$ the half sum of positive roots. 
We denote the enveloping algebra of $\fg$ by $\calU(\fg)$. Let $\lambda \in \fh^*$. We define the Verma modules by $M(\lambda):=\calU(\fg) \otimes_{\calU(\fb)} \bC_\lambda$, where $\bC_\lambda$ is regarded as a $\fb$-module by $\fb \to \fh$. We denote by $I(\lambda):=\Ann_{\calU(\fg)}(M(\lambda))$ the annihilator of the Verma module. 

The theorem of Beilinson and Bernstein relates representations of semisimple Lie algebras and D-modules on flag varieties. To state their result in full generality and to explain the results of this paper, we need the notion of sheaves of twisted differential operators (TDO). 
For the precise definition of the TDO, see Definition \ref{defTDO}. The isomorphism classes of TDO's on the flag variety $G/B$ are parametrized by the elements of $\fh^*$. For each $\lambda \in \fh^*$, there is a natural way to construct a corresponding TDO $\calD^\lambda_{G/B}$ and a homomorphism $\psi^\lambda: \calU(\fg) \to \Gamma(G/B, \calD^\lambda_{G/B})$. We denote by $\calD^\lambda_{G/B}\modu$ the category of quasi-coherent $\calD^\lambda_{G/B}$-modules. The localization theorem of Beilinson and Bernstein (\cite{BeiBer81}) states the following. 
The homomorphism of algebras $\psi^\lambda: \calU(\fg) \to \Gamma(G/B, \calD^\lambda_{G/B})$ factors through an isomorphism $\calU(\fg)/I(\lambda-2\rho) \cong \Gamma(G/B, \calD^{\lambda}_{G/B})$ and if $\lambda$ is regular and dominant the functor $\Gamma: \calD^\lambda_{G/B}\modu \to \calU(\fg)/I(\lambda-2\rho)\modu$ which assign to a $\calD^{\lambda}_{G/B}$-module $\calM$ the space $\Gamma(G/B, \calM)$ of all global sections is an equivalence of categories. For the definition of regularity and dominance, see Definition \ref{regdom}. 
Note that our choice of positive roots is the opposite to that of Beilinson and Bernstein. 
The inverse functor $\Delta^\lambda$ (see \S \ref{TDOflag}) is called the localization functor. 

This theorem connects the representation theory of semisimple Lie algebras and the geometry of the flag variety. For example, the results of Kazhdan and Lusztig \cite{KazLus80} and Lusztig and Vogan \cite{LusVog83} on the perverse sheaves on flag varieties can be applied via the localization theorem and the Riemann-Hilbert correspondence (\cite[Chapter VIII]{Bor87}) to the representation theory and yield a formula of multiplicities of standard modules, one of which is known as the Kazhdan-Lusztig conjecture. 

Beilinson and Bernstein in \cite{BeiBer83} studied the $\calD^\lambda_{G/B}$-module for not necessarily antidominant $\lambda$. In this case the functor $\Gamma$ is not exact. But they proved that the localization theorem still holds for regular $\lambda$ if we consider derived categories \cite[\S 13. Corollary]{BeiBer83}. 

Backelin and Kremnizer studied the case of non-regular $\lambda$ and established a localization theorem \cite{BacKre15} using the relative enveloping algebra of Borho and Brylinski \cite{BorBry89}. 

An analogue of the localization theorem still holds for the partial flag variety $G/P_I$, where $P_I$ is a parabolic subgroup of $G$ which contains $B$ corresponding to $I \subset \Pi$. 
Isomorphism classes of TDO's on the partial flag variety $G/P_I$ are parametrized by $(\fh/\fh_I)^*$, where $\fh_I$ is the subalgebra generated by the coroots $\check\alpha$, $\alpha \in I$. For partial flag varieties the homomorphism $\psi^\lambda_I: \calU(\fg) \to \Gamma(G/P_I, \calD^\lambda_{G/P_I})$ is not always a surjection. For a regular and antidominant weight $\lambda \in (\fh/\fh_I)^* \subset \fh^*$, the following result is known. The homomorphism $\psi^\lambda_I$ is surjective and the functor $\Gamma: \calD^\lambda_{G/P_I}\modu \to \Gamma(G/P_I, \calD^\lambda_{G/P_I})\modu$ is an equivalence of categories. 
This theorem is stated in \cite{BeiBer81} and a proof is found in \cite[Theorem 6.3]{Bie90}. In Proposition \ref{propequiv} we show that this theorem still holds for any regular weight $\lambda \in (\fh/\fh_I)^*$ if we consider derived categories. 
Bien used the localization theorem for dominant weight on partial flag varieties to study discrete spectrum of the semisimple symmetric space. Kitchen studied the relation of the global section functor on $G/B$ and that on $G/P_I$ under the pullback along the quotient map $G/B \to G/P_I$ and proved that the functor $\Gamma$ commutes with the pullback \cite[Theorem 5.1]{Kit12}. She used this result to study the global sectons of standard twisted D-modules on partial flag varieties.

Beilinson and Bernstein defined an intertwining functor for full flag varieties $G/B$ in \cite[\S 11]{BeiBer83}. The intertwining functors are defined as integral transforms of twisted D-modules along the $G$-orbit under the diagonal $G$-action on $G/B \times G/B$. Thus intertwining functors are parametrized by elements $w \in W$ of the Weyl group and changes $\lambda$ by the action of the Weyl group $w(\lambda-\rho) +\rho$, in a way that $\Gamma(G/B, \calD^\lambda_{G/B})$ are unchanged. 
Beilinson and Bernstein proved that the intertwining functors are equivalences of derived categories. 
They used intertwining functors to prove the Casselman's submodule theorem \cite[Theorem 1]{BeiBer83}. 
Mili\v ci\'c \cite{Mil94} studied the property of intertwining functors and proved that an intertwining functor 
in one direction commutes with the derived functor of the global section functor. In this paper we generalize this result to partial flag varieties (Main Theorem \ref{main2}). This is one of the main results of this paper. The result by Mili\v ci\'c is used to give a classification of irreducible admissible $(\fg, K)$-modules. Kashiwara and Tanisaki \cite{KasTan96} studied the case of affine flag varieties. They showed that intertwining functors are equivalences of categories and that an intertwining functor 
in one direction commutes with the derived functor $\bR\Gamma$. They used these results to prove the Kazhdan-Lusztig conjecture for affine flag varieties. 
Marastoni studied the Radon transform of (non-twisted) D-modules on Grassmannian varieties \cite[Theorem 1]{Mar98} and general partial flag varieties \cite[Theorem 1.1]{Mar13} in the case intertwining functor is given by the open orbit in $G/P \times G/P^{\textit{op}}$, where $P^{\textit{op}}$ is the opposite of $P$ in $G$. We generalize his result to intertwining functors given by more general orbits (Main Theorem \ref{main1}). This is also one of the main results of this paper. 

Intertwining functors are studied from different perspectives. We mention some of related results. 
D'Agnolo and Schapira \cite{DAgSch96a} established general theory of integral transform of D-modules along a correspondence. In \cite{DAgSch96b} they applied their theory for the $n$-dimensional projective space $\bP$ and the dual projective space $\bP^*$ with the correspondence given by the closed orbit of the product $\bP \times \bP^*$ under the diagonal action of the general linear group $GL(n+1)$.
Marastoni and Tanisaki \cite{MarTan03} treated the Radon transform for two partial flag varieties when the Radon transform is given by the closed $G$-orbit. They studied how weakly equivariant D-modules behave under the Radon transform. 

Yun \cite{Yun09} studied integral transforms of perverse sheaves which are constructible along fixed stratifications. 
If the stratifications on both sides satisfies some good properties with respect to the correspondence, he proved that the Radon transform with respect to the correspondence is an equivalence of derived categories and that the Radon transform sends tilting objects to projective objects. 
The stratifications of $G/P$ and $G/P^{\textit{op}}$ with respect to $B$-orbits and the open $G$-orbit of $G/P \times G/P^{\textit{op}}$ satisfy the assumptions of Yun's theorem and he obtained a category equivalence. This equivalence is a special case of Marastoni's result in the sense that the categories of sheaves constructible along these strata are the category of D-modules that are smooth along $B$-orbits by the Riemann-Hilbert correspondence. The method of Yun has an advantage that it allows to calculate the weights of mixed perverse sheaves. Yun's theorem is also applicable to the Radon transform between an affine flag variety and its opposite thick flag variety. 

Arkhipov and Gaitsgory \cite{ArkGai??} studied the intertwining operators for the category of twisted D-modules on an affine flag variety and its opposite thick affine flag variety using D-modules on the moduli stack of principal $G$-bundles on $\bP^1$ with reductions to the Borel subgroup at 0 and $\infty$, which can be regarded as the quotient stack $G \backslash (G/I \times G/I^{\itop})$ for the algebraic loop group. 

Cautis, Dodd and Kamnitzer \cite{CauDodKamX16} constructed categorical $\fs\fl_2$ action on $\bigoplus_{0 \leq i \leq n} D^b(\calD_{\Gr(i,n),h}\modu)$, the direct sum of derived categories of filtered D-modules on Grassmannian varieties. 
They showed that the resulting equivalence of category $D^b(\calD_{\Gr(i,n),h}\modu) \cong D^b(\calD_{\Gr(n-i,n),h}\modu)$ is given by the Radon transform along the open $GL(n)$-orbit of the product. 

Let us now explain the results in this paper. 

Let $I$ and $J$ be subsets of the set of simple roots $\Pi$ of $G$. We have corresponding parabolic subgroups $P_I$ and $P_J$ of $G$. The $G$-orbits of $G/P_J \times G/P_I$ are parametrized by double cosets in $W_I\backslash W/W_J$ of the Weyl group by the parabolic subgroups $W_I$ and $W_J$. We denote by $\bO_w$ the orbit corresponding to $w$. It is possible to define an integral transform for any $G$-orbit on the product, but to consider twisted D-modules we restrict to the case of $w$ for which the projections from $\bO_w$ to $G/P_I$ and $G/P_J$ are affine space fibrations, i.e., $w$ for which $wJ=I$ holds (Condition $\conda$).

We define the intertwining functors $R^{w,\mu}_{+}$ and $R^{w,\mu}_{!}$ for $w \in W$ and $\mu \in X^*(P_I)$ by first pulling back the twisted D-modules from $G/P_I$ to $\bO_w$, then tensoring by the invertible sheaf $\calL^{\mu} \otimes \det(\Theta_{p^w_1})$, and then pushing it forward to $G/P_J$ (Definition \ref{defradon}). Here $\det\Theta_{p^w_1}$ is the determinant invertible sheaf of relative tangent sheaf of the projection $p^w_1:\bO_w \to G/P_J$ and $\calL^\mu$ the $G$-equivariant invertible sheaf associated to $\mu$. 
The intertwining functors $R^{w, \mu}_+$ and $R^{w, \mu}_!$ send $D^b(\calD^\lambda_{G/P_I}\modu)$ to $D^b(\calD^{w^{-1}(\lambda-\rho)+\rho+w^{-1}\mu}_{G/P_J}\modu)$. 
The first main result of this paper is that the intertwining functors for these $w$ give equivalences of derived categories. 

\begin{thm}[{Theorem \ref{thmequiv}}]\label{main1}~\\
The functors $R^{w,\mu}_+$ and $R^{w^{-1},-w^{-1}\mu}_!$ are mutually inverse equivalences. 
\end{thm}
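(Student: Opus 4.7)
The plan is to show $R^{w^{-1},-w^{-1}\mu}_! \circ R^{w,\mu}_+ \cong \id$ on $D^b(\calD^\lambda_{G/P_I}\modu)$; the reverse composition then follows by symmetry, since Condition $\conda$ is invariant under $w\mapsto w^{-1}$ (if $wJ=I$ then $w^{-1}I=J$). Form the fiber product
\[
Z := \bO_w \times_{G/P_J} \bO_{w^{-1}} \subset G/P_I \times G/P_J \times G/P_I,
\]
with projections $\pi_1 : Z\to\bO_w$, $\pi_2 : Z\to\bO_{w^{-1}}$, and outer projections $q_1:=p^w_2\circ\pi_1$, $q_2:=p^{w^{-1}}_1\circ\pi_2 : Z \to G/P_I$. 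Under Condition $\conda$ the four maps $p^w_i, p^{w^{-1}}_i$ are smooth affine-space fibrations, so D-module base change on the defining cartesian square, combined with the projection formula to pass the outer line bundles through the inner pushforward (and with the $(-)_+$ vs. $(-)_!$ discrepancy absorbed into a $\det\Theta$ twist for smooth morphisms), rewrites the composition as
\[
\calM \longmapsto (q_2)_!\bigl(q_1^*\calM \otimes \calT\bigr), \qquad \calT := \pi_1^*(\calL^\mu\otimes\det\Theta_{p^w_1}) \otimes \pi_2^*(\calL^{-w^{-1}\mu}\otimes\det\Theta_{p^{w^{-1}}_1}).
\]

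This integral transform is convolution with the kernel $(q_1,q_2)_!\calT$ on $G/P_I \times G/P_I$, so the theorem reduces to showing that this kernel equals $\Delta_*\calO_{G/P_I}$ (with $\Delta$ the diagonal embedding), up to the canonical twist. The $G$-orbits appearing in the image of $(q_1,q_2)$ are indexed by $u\in W_J$ via the double-coset class $W_I(w^{-1}uw)W_I$; the value $u=e$ yields the distinguished stratum $Z_0\subset Z$, which under $\conda$ projects as an affine-space bundle onto the diagonal $\Delta(G/P_I)\cong G/P_I$. I would show (i) that $\calT|_{Z_0}\cong\det\Theta_{Z_0/\Delta(G/P_I)}$ via a root-level weight computation, whence the pushforward of this restriction supplies the desired $\Delta_*\calO_{G/P_I}$ contribution by the standard formula for pushforward along an affine-space fibration twisted by its tangent determinant; and (ii) that on each non-diagonal stratum the pushforward of the corresponding restriction of $\calT$ vanishes.

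The main obstacle, I expect, will be the vanishing of the non-diagonal contributions in (ii). One approach is to exhibit a length-additive factorization $w = w_1\cdot w_2$ through an intermediate parabolic $P_K$ with each factor satisfying $\conda$ for $(I,K)$ respectively $(K,J)$, to establish a compatibility $R^{w,\mu}_+ \cong R^{w_1,?}_+\circ R^{w_2,?}_+$, and thereby reduce inductively to the simple-reflection base case, which is handled as in Beilinson--Bernstein \cite{BeiBer83} via direct analysis on the associated $\mathbb{P}^1$-bundle. An alternative, more direct route is a weight argument: on each non-diagonal stratum the twist $\calT$ restricted to the generic $q_2$-fiber carries a non-trivial character under a central subtorus of the stabilizer, forcing the pushforward to vanish. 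The weight identification in (i) is comparatively clean once the isomorphism $X^*(P_J)\cong X^*(P_I)$ induced by $w$ (enabled precisely by $\conda$) is invoked to cancel the contributions of $\calL^\mu$ and $\calL^{-w^{-1}\mu}$, leaving a combinatorial identity among root spaces in the unipotent radicals of $P_I$ and $P_J$ that must be checked by a direct count.
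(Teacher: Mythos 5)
Your high-level strategy (reduce to a kernel calculation on $G/P_I\times G/P_I$, stratify by $G$-orbits, check the diagonal contribution and kill the rest, and use a length-additive factorization to set up an induction) is the right shape, and the composition step $R^{w_1w_2}_?\cong R^{w_2}_?\circ R^{w_1}_?$ via base change is exactly Proposition~\ref{compose} in the paper. But both of your proposed routes for the crucial off-diagonal vanishing have genuine gaps. First, the base case of the Brink--Howlett induction is \emph{not} a simple-reflection / $\mathbb{P}^1$-bundle situation: the factorization of Proposition~\ref{BH} terminates at $w=v[\alpha,J]=w_0^{J\cup\{\alpha\}}w_0^J$, whose orbit $\bO_{v[\alpha,J]}$ is the open $G$-orbit in $G/P_J\times G/P_I$ (where $I\cup\{\alpha'\}=J\cup\{\alpha\}$), and the projections $p^w_i$ are affine-space fibrations of dimension $\ell(v[\alpha,J])$, which can be arbitrarily large. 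So you cannot import the Beilinson--Bernstein $\mathbb{P}^1$-bundle computation; the honest base case is the open-orbit (``Grassmannian duality'') transform, i.e.\ a twisted version of Marastoni's theorem, which the paper proves separately as Lemma~\ref{lemequiv}. Moreover, even after reducing to $v[\alpha,J]$ you are not yet in a maximal-parabolic situation unless $J\cup\{\alpha\}=\Pi$; the paper bridges this by a local argument over $\bar P_{J\cup\{\alpha\}}$-orbits, where the correspondence factors as (a maximal-parabolic correspondence for the Levi $L_{J\cup\{\alpha\}}$) $\times$ (identity on $\bar U_{J\cup\{\alpha\}}$), together with $G$-equivariance to move the base point. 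Your plan omits this step.

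Second, your ``weight argument'' for the off-diagonal vanishing is not a valid D-module mechanism. A nontrivial character of a central subtorus acting on a twist does not by itself force $\bR\Gamma$ or a pushforward to vanish for twisted D-modules; there is no analogue of equivariant-sheaf isotypic vanishing here, and in fact on the diagonal stratum the twist is also nontrivial yet contributes. What actually makes the off-diagonal contributions die in the paper is topological: after restricting to a fiber over $(x_1,x_2)$ with $x_1\ne x_2$, the composed kernel reduces to $a_{U_1+}\,j_{1!}\calO_{U_1\cap U_2}$ for open Bruhat-type cells $U_1,U_2\subset G/P_J$, which via Riemann--Hilbert is $\bR\Gamma(j_{1!}\bC_{U_1\cap U_2})$; this vanishes because $U_1$ is an affine space and $Z:=U_1\setminus(U_1\cap U_2)=C\cap gY$ (open cell intersected with a translate of the boundary) is contractible, so the long exact sequence collapses. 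This contractibility lemma is the real engine of the proof and does not appear in your plan. Without a correct vanishing mechanism for the base case (and without the Levi-reduction step for general $v[\alpha,J]$), the argument does not close.
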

If we set $\lambda =0$, $\mu=\rho - w\rho$ and $w$ to be the minimal coset representative of longest element of $W$, this theorem specializes to the result of Marastoni \cite[Theorem 1.1]{Mar13}. 

Next we consider the compatibility of the intertwining functor for $\mu=0$ and the global section functors. We denote by $R^{w}_+$ and $R^{w}_!$ the intertwining functors for $\mu=0$. 

We denote by $\bR\Gamma^\lambda_I$ the composition of the derived functor of taking global section $D^b(\calD^\lambda_{G/P_I}\modu) \to D^b(\Gamma(G/P_I, \calD^\lambda_{G/P_I})\modu)$ and the pullback along ${\rm U}^\lambda_I:=\calU(\fg)/{\rm Ker}(\psi^\lambda_I) \to \Gamma(G/P_I, \calD^\lambda_{G/P_I})$.
We have natural morphisms of functors $I^w_+: \bR\Gamma^{\lambda}_I \to \bR\Gamma^{w^{-1}*\lambda}_J \circ R^w_{+}$ and $I^w_!:\bR\Gamma^{\lambda}_I \circ R^w_! \to \bR\Gamma^{w^{-1}*\lambda}_J$ (Proposition \ref{intmor}). 
We give a sufficient condition for $I^w_+$, $I^w_!$ to be isomorphisms. 
We need some notation. We define $v[\alpha, I] \in W$ for $\alpha \in \Pi \setminus I$ by $v[\alpha, I]= w^{I \cup \{\alpha\}}_0w^{I}_0$, where $w^I_0$ is the longest element of $W_I$. 
Take $\alpha_1, \ldots,  \alpha_r$ in Proposition \ref{BH} and let $I_0= I = v[\alpha_1, I_1]I_1,I_1 = v[\alpha_2, I_2]I_2, \ldots , I_{r-1} = v[\alpha_r, I_r]I_r , I_r=J$. We define the (scalar) generalized Verma module by $M^\fg_{\fp_I}(\mu):= \calU(\fg)\otimes_{\calU(\fp_I)}\bC_\mu$ for a character $\mu$ of $\fp_I$. For $K_1 \subset K_2 \subset \Pi$, we denote by $\fl_{K_1}$ the Levi subalgebra of $\fg$ corresponding to $K_1$ containing $\fh$ and by $\fp^{K_2}_{K_1}$ the parabolic subalgebra $\fl_{K_2}\cap \fp_{K_1}$ of $\fl_{K_2}$. 

The second main result of this paper is the following. 
\begin{thm}[{Theorem \ref{thmisom}}]\label{main2}
Let $\lambda_0=\lambda \in (\fh/\fh_I)^*$ and $\lambda_i := v[\alpha_i, I_i]^{-1}*\lambda_{i-1}$. 
Assume that $\lambda$ is regular and for each $i$ the generalized Verma module $M^{\fl_{I_i \cup \{\alpha_i\}}}_{\fp^{I_i \cup \{\alpha_i\}}_{I_i}}({v[\alpha_i,I_i]^{-1}\lambda_{i-1}})$ of the Levi subalgebra is irreducible. Then the morphisms $I^w_+: \bR\Gamma^{\lambda}_I \to \bR\Gamma^{w^{-1}*\lambda}_J \circ R^w_{+}$ and $I^w_!:\bR\Gamma^{\lambda}_I \circ R^w_! \to \bR\Gamma^{w^{-1}*\lambda}_J$ are isomorphisms of functors. 
\end{thm}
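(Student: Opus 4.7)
The plan is to argue by induction on the length $r$ of the chain $I_0 = I, I_1, \ldots, I_r = J$ furnished by Proposition \ref{BH}, reducing the statement to the single-step case $w = v[\alpha, I]$ for one simple root $\alpha \notin I$, which then has to be applied at each of the weights $\lambda_0, \lambda_1, \ldots, \lambda_{r-1}$ in turn.

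To run the induction, I would first establish a compositional compatibility of the intertwining functors and of the morphisms $I^w_\pm$: whenever $w = w_1 w_2$ with the geometric data matching up, one has $R^w_+ \cong R^{w_2}_+ \circ R^{w_1}_+$ (and dually for $R^w_!$), and $I^w_+$ factors as the whiskered composite
\begin{equation*}
\bR\Gamma^{\lambda}_I \xrightarrow{I^{w_1}_+} \bR\Gamma^{w_1^{-1}*\lambda}_{I_1} \circ R^{w_1}_+ \xrightarrow{I^{w_2}_+ * R^{w_1}_+} \bR\Gamma^{w^{-1}*\lambda}_J \circ R^{w_2}_+ \circ R^{w_1}_+ ,
\end{equation*}
with an analogous factorization for $I^w_!$. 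This compatibility should follow from the construction of the morphism in Proposition \ref{intmor} together with the composition property of $R^w_\pm$ used in the proof of Theorem \ref{main1}; once in place, the inductive step is formal.

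For the base case $w = v[\alpha, I]$, the key observation is that $P_I$ and $P_J$ both lie in the larger parabolic $P_{I \cup \{\alpha\}}$, so one has a span
\begin{equation*}
G/P_I \to G/P_{I \cup \{\alpha\}} \leftarrow G/P_J
\end{equation*}
whose legs are smooth proper fibrations with fibres isomorphic to partial flag varieties of the Levi $L := L_{I \cup \{\alpha\}}$. The orbit $\bO_{v[\alpha, I]}$ lies in the fibre product of this span, and up to the determinant twist $R^{v[\alpha, I]}_+$ is computed fibrewise over $G/P_{I \cup \{\alpha\}}$ via the analogous intertwining functor on the Levi's partial flag varieties $L/P^{I \cup \{\alpha\}}_I$ and $L/P^{I \cup \{\alpha\}}_J$. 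A base-change and projection-formula argument then reduces the question of compatibility with $\bR\Gamma$ to this Levi-level intertwining problem. The irreducibility hypothesis on $M^{\fl_{I \cup \{\alpha\}}}_{\fp^{I \cup \{\alpha\}}_I}(v[\alpha, I]^{-1}\lambda)$ is precisely what is needed here: combined with the regularity of $\lambda$ (which is preserved under the shifted Weyl action and so propagates down the chain $\lambda_i$), it is the hypothesis under which Proposition \ref{propequiv} supplies a derived Beilinson--Bernstein equivalence for $L/P^{I \cup \{\alpha\}}_I$ at this weight, and that equivalence forces the Levi-side intertwining morphism to be an isomorphism. Descending through the base change yields the isomorphism $I^{v[\alpha, I]}_+$; the case of $I^{v[\alpha, I]}_!$ is parallel.

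The main obstacle I expect is exactly this last step: cleanly translating the abstract ``scalar generalized Verma module is irreducible'' into a concrete cohomological statement about the derived pushforward from $G/P_I$ to $G/P_{I \cup \{\alpha\}}$, and then linking it to the intertwining morphism on the Levi side. In Mili\v ci\'c's original $G/B$-setting the analogous reduction ultimately reduces to a rank-one $\bP^1$ computation; here the analogue is on a possibly higher-dimensional Levi partial flag variety, and one seems to need the full strength of the derived Beilinson--Bernstein equivalence (Proposition \ref{propequiv}) for the Levi. A subsidiary bookkeeping task is to verify that the weight $\lambda_{i-1}$ recorded in the statement is precisely the weight produced by the first $i-1$ steps of the induction, so that the hypothesis is applied at exactly the correct place.
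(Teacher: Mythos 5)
Your high-level plan --- reduce to the single-step case $w = v[\alpha, I]$ via the Brink--Howlett reduced expression and work locally with the Levi $L_{I\cup\{\alpha\}}$ --- matches the shape of the paper's argument, and your remark that regularity of $\lambda$ propagates down the chain $\lambda_i$ is correct. The base case, however, is where your sketch has a genuine gap. You assert that the irreducibility of $M^{\fl_{I\cup\{\alpha\}}}_{\fp^{I\cup\{\alpha\}}_I}(v[\alpha,I]^{-1}\lambda)$ together with regularity ``is the hypothesis under which Proposition~\ref{propequiv} supplies a derived Beilinson--Bernstein equivalence for $L/P^{I\cup\{\alpha\}}_I$'' and that this equivalence ``forces the Levi-side intertwining morphism to be an isomorphism.'' Neither claim is right. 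Proposition~\ref{propequiv} needs only regularity, and irreducibility of a scalar generalized Verma module is not a hypothesis of it; moreover, regularity of $\lambda$ for $\fg$ does not obviously transfer to regularity of the induced twist on the Levi flag variety, so even the applicability of Proposition~\ref{propequiv} at the Levi level is not established. And even granting a derived equivalence there, an equivalence of categories does not by itself force the natural transformation $I^{v[\alpha,I]}_+$ to be an isomorphism --- one still has to compute something.

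What the paper actually does with the irreducibility hypothesis is quite different, and this is the key idea missing from your sketch. The crucial step (Lemma~\ref{RofD} for the Levi, glued via weak equivariance across the chain) is to identify the \emph{object} $R^w_+(\calD^{\lambda}_{G/P_I})$ with $\calD^{w^{-1}*\lambda}_{G/P_J}$, not to compare global section functors directly. Both sides are weakly $G$-equivariant, so by Proposition~\ref{wequivequiv} they are determined by their fibers at $eP_J$; Proposition~\ref{fiberGVM} identifies the fiber of $\calD^{v^{-1}*\lambda}_{G/P_J}$ with the generalized Verma module $M^{\fg}_{\fp_J}(v^{-1}\lambda)$, a base-change computation identifies the fiber of $R^v_+\calD^\lambda_{G/P_I}$ as a $\fp_J$-finite module with the same formal character and a generator of the correct weight, and the irreducibility hypothesis then forces the two fibers to agree. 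That is the only place irreducibility enters. Once $R^w_+\calD^\lambda_{G/P_I}\cong\calD^{w^{-1}*\lambda}_{G/P_J}$ is known, the paper does \emph{not} induct on natural transformations $I^{v[\alpha_i,I_i]}_+$ as you propose; it checks directly that $I^w_+(\calD^\lambda_{G/P_I})$ is an isomorphism (using Proposition~\ref{Dcohvan}, Lemma~\ref{lemsurj}, Proposition~\ref{sameideal}), and then uses Proposition~\ref{propequiv} for $G/P_I$ itself --- where regularity of $\lambda$ is genuinely available --- to replace an arbitrary $\calM$ by $\bL\Delta_I$ of a free resolution of $\bR\Gamma(\calM)$, thereby reducing the general case to the single object $\calD^\lambda_{G/P_I}$. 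Your inductive framework with compositional compatibility of the morphisms $I^w_{\pm}$ could perhaps be made to work, but you would still need the fiber-matching step above in place of the appeal to a Levi-level Beilinson--Bernstein equivalence.
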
 
The generalized Verma modules appearing in this theorem are tensor products of generalized Verma modules induced from a maximal parabolic subalgebra and a one dimensional representation. It is irreducible if ${v[\alpha_i,I_i]^{-1} \lambda_{i-1}}$ is antidominant. A criterion of the irreducibility of is given by Jantzen \cite{Jan77}. He, Kubo and Zierau gave a complete list of reducible parameters for scalar generalized Verma modules associated to maximal parabolic subalgebras of simple Lie algebras \cite{HeKubZie??}.  
For complete flag varieties $G/B$, this theorem coincides with the result of Mili\v ci\'c \cite[Theorem L.3.23]{Mil94}

Let us briefly describe the outline of this paper. 
In subsection \ref{sectTDO} we recall the general properties of sheaves of twisted differential operators on smooth algebraic varieties. 
In subsection \ref{sectflag} we recall basic facts on partial flag varieties and representations of semisimple Lie algebras which are needed in this paper. 
In section \ref{sectradon} we define intertwining functors (Radon transforms) for a class of orbits in product of partial flag varieties and prove that they are equivalences of derived categories (Theorem \ref{thmequiv}). 
In section \ref{sectsect} we study the compatibility of global section functors and intertwining functors. We prove a localization theorem (Proposition \ref{propequiv}) and use this to prove the compatibility of global section functors and intertwining functors from dominant to antidominant direction (Theorem \ref{thmisom}). 

The author wishes to express his gratitude to his advisor Hisayosi Matumoto for introducing this subject to the author. The author also thanks him for his encouragement and advice and indicating the proof of Lemma \ref{lemsurj}. The author thanks Syu Kato and Yoichi Mieda for reading this paper and pointing out many typos and mistakes. 
This work is partially supported by Grant-in-Aid for JSPS Fellows (No. 12J09386).

\section{Preliminary}

\subsection{Notation}

We always work over the field $\bC$ of complex numbers. 

For a ring $A$, we denote by $A\modu$ the category of left $A$-modules. 
For a morphism of rings $f:A \to B$, we denote by $f^*$ the pullback functor $B\modu \to A\modu$. 

For  algebraic groups $G$, $B$, $P_I$, \ldots , we denote their Lie algebras by $\fg$, $\fb$, $\fp_I$, \ldots .
We denote by $\Rep(G)$ the category of rational representations of $G$. We denote by $X^*(G)$ the group of characters of $G$. 
For a character $\lambda$ of $G$ or $\fg$, we denote by $\bC_\lambda$ the corresponding one dimensional representation.

We always denote by id the identity functor on a category. For an abelian category $\calC$, we denote by $D^b(\calC)$ the bounded derived category of $\calC$ and by $D^-(\calC)$ the derived category consisting of bounded above complexes. 

Let $f$ be a continuous map between topological spaces. We denote by $f^{-1}$ the pullback of sheaves and by $f_*$ the pushforward of sheaves. We denote by $f_!$ the proper pushforward. For a sheaf $\calF$ on a topological space $X$, we denote by $\Gamma(\calF)$ the set of all sections of $\calF$ on $X$ instead of $\Gamma(X, \calF)$. 

Let $f: X \to Y$ be a morphism of algebraic varieties $X$ and $Y$. We denote by $f^*$ the pullback which is defined by $f^*(\calM)= \calO_X \otimes_{f^{-1}(\calO_Y)} f^{-1}(\calM)$ for an $\calO_Y$-module $\calM$ and  by $f_*$ the pushforward. 

We denote by $\{\star\}$ the reduced algebraic variety consisting of only one point and by $\star$ its point. For an  algebraic variety $X$, we denote by $a_X$ the unique morphism from $X$ to $\{\star\}$. For a locally free $\calO_X$-modules $\calV$, we denote by $\det(\calV)$ the determinant invertible sheaf. 
For a smooth algebraic variety $X$,  $\Theta_{X}$ is its tangent sheaf, $\Omega_X$ is its cotangent sheaf and $T^*X$ is the cotangent bundle. 
Let $f: X \to Y$ be a morphism of smooth algebraic varieties. Then we denote by $\omega_f$ the relative canonical sheaf of $f$.
Let $f: X \to Y$ be a smooth surjective morphism of smooth algebraic varieties $X$ and $Y$. We denote by $\Theta_f$ the relative tangent sheaf and by $\Omega_f$ the relative cotangent sheaf.

\subsection{Sheaves of twisted differential operators}\label{sectTDO}

In this subsection we recall the definition and properties of sheaves of twisted differential operators following Kashiwara and Tanisaki \cite{Kas89, KasTan96}. Note that in \cite{KasTan96} they use right modules while we use left modules and our notation is different from theirs.

\subsubsection{Definition of sheaves of twisted differential operators}

Let $X$ be a smooth algebraic variety. We denote by $\calO_X$ the sheaf of regular functions on $X$ and by $\calD_X$ the sheaf of differential operators on $X$. 

\begin{defi}\label{defTDO}
A sheaf of rings $\calA$ on X with a homomorphism $\iota: \calO_X \to \calA$ and an increasing filtration $(F^m\!\calA)_{m \in \bN}$ of $\calA$ by coherent $\calO_X$-submodules are called a \emph{sheaf of twisted differential operators (TDO)} on $X$ if following properties hold. 

1) The homomorphism $\iota$ induces an isomorphism $\calO_X \cong F^0\!\calA$.

2) $F^{m_1}\!\calA \cdot F^{m_2}\!\calA \subset F^{m_1+m_2}\!\calA$ 

3) [$F^{m_1}\!\calA , F^{m_2}\!\calA ] \subset F^{m_1+m_2-1}\!\calA$

\noindent The property 3) allows us to define a homomorphism of $\calO_X$-modules $\sigma: \gr_F^1\calA \to \Theta_X$ by defining ${\rm gr}_F^1\calA \ni \bar{a} \mapsto ( f \mapsto [a, f] ) \in \Theta_X$. 

4) $\sigma$ is an isomorphism. 

5) $\Sym^{\bullet}\Theta_X \to \gr_F^{\bullet}\calA$ induced by $\sigma^{-1}$ is an isomorphism. 

\end{defi}

For a coherent $\calA$-module, we define its characteristic variety $Ch(\calM)$ which is a closed conic subset of the cotangent space $ T^*X$ using good filtrations in the same way for D-modules. 
A coherent $\calA$-module is called \emph{holonomic} if its characteristic variety is a Lagrangian subvariety of $T^*X$. 
We denote by $\calA\modu$ the category of quasi-coherent $\calA$-modules, by $D^b(\calA\modu)$ its bounded derived category and by $D^b_{{\textit hol}}(\calA\modu)$ full subcategory consisting of complexes whose cohomology in each degree is holonomic.

There is a natural bijection between the set of all isomorphism classes of TDO's and $H^2(X, \sigma^{\geq 1}\Omega^{\bullet}_X)$ \cite[Theorem 2.6.1]{Kas89}, where $\Omega^{\bullet}_X$ is the de Rham complex of $X$ and $\sigma^{\geq 1}$ is the brutal truncation, i.e. replacing the degree $\leq 0$ term of the complex by 0. Denote the cohomology class corresponding to $\calA$ under this bijection by $c(\calA) \in H^2(X, \sigma^{\geq 1}\Omega^{\bullet}_X)$. 

For each $x \in X$, we have an $\calA$-module $\calA \otimes_{\calO_X} \bC_x$, where $\bC_x$ is the skyscraper sheaf supported on $x$ with $1$-dimensional fiber, which has a canonical structure of an $\calO_X$-module. This is a holonomic $\calA$-module. We denote this $\calA$-module by $\calA(x)$. 

\subsubsection{Operations on sheaves of twisted differential operators and on their modules}

Let $X$ and $Y$ be a smooth algebraic variety, $f:X \to Y$ a morphism, $\calA$, $\calA_1$, $\calA_2$ be TDO's on $Y$, and $\calL$ be an invertible sheaf on $Y$. 
We denote by $c_1(\calL) \in H^2(Y; \bC)$ the first Chern class of $\calL$ defined below. We have a homomorphism of abelian groups ${\rm dlog}: \calO^*_X \to \Omega^1_X $ define by $f \to f^{-1}df$. The homomorphism ${\rm dlog}$ induces a homomorphism $H^1(X,\calO^*_X) \to H^1(X, \Omega^1_X )$. We define $c_1(\calL)$ as the image of the class of $\calL$ in $H^1(X,\calO^*_X)$ under the composition of ${\rm dlog}$ with $H^1(X, \Omega^1_X ) \to H^2(X, \sigma^{\geq 1}\Omega^{\bullet}_X) $

First we recall operations on TDO's. 

\begin{defi}
We denote by $\calA^{op}$ the opposite ring of $\calA$. 
\end{defi}
We have $c(\calA^{op})=-c(\calA)+c_1(\Omega_Y)$ (\cite[\S 2.7.1]{Kas89}). 

\begin{defi}[{after the first Remark\ 2.6.5 \cite{Kas89}}]~\\
Let $a$ be a complex number. There is a TDO $\calA^{\calL^a}$ with the property $c(\calA^{\calL^a})=c(\calA)+a c_1(\calL)$
\end{defi}
When $a$ is an integer, then $\calD^{\calL^a}$ is the sheaf of differential operators $\calE\!\textit{nd}^{\textit{fin}}_{\bC}(\calL^a)$ acting on $\calL^a$ defined below. 

Let $\calR$ be a sheaf of rings on $Y$ and $\calM$ be a left $\calO_Y$- right $\calR$-module. 
We define a sheaf of filtered rings $\calE\!\textit{nd}^{\textit{fin}}_{\calR}(\calM) $ as follows. 
First we define $F^0\calE\!\textit{nd}^{\textit{fin}}_{\calR}(\calM)$ to be the image of homomorphism $\calO_X \to \calE\!\textit{nd}_{\calR}(\calM)$. 
We define $F^n\calE\!\textit{nd}^{\textit{fin}}_{\calR}(\calM)$ for $n \in \bN$ recursively by $F^n\calE\!\textit{nd}^{\textit{fin}}_{\calR}(\calM):= \{ r \in F^{n}\calE\!\textit{nd}^{\textit{fin}}_{\calR}(\calM) \mid [\calO_X, r] \subset F^{n-1}\calE\!\textit{nd}^{fin}_{\calR}(\calM) \}$. 
Finally we define a sheaf of rings $\calE\!\textit{nd}^{\textit{fin}}_{\calR}(\calM)$ by $\bigcup_{i \in \bN} F^i\calE\!\textit{nd}^{\textit{fin}}_{\calR}(\calM)$.

\begin{defi}
We define a TDO $\calA_1 \# \calA_2$ by $\calE\!\textit{nd}^{\textit{fin}}_{\calA_1 \otimes_{\bC} \calA_2}(\calA_1 \otimes_{\calO_Y} \calA_2)$, where the tensor product is taken using left $\calO_X$-module structures of $\calA_1$ and $\calA_2$. 
\end{defi}
We have $c(\calA_1 \# \calA_2)= c(\calA_1)+c(\calA_2)$ (\cite[Lemma 1.1.1]{KasTan96}).

\begin{defi}
We define a TDO
$\calA^{-\#}$ by $(\calA^{{\textit op}})^{\Omega_Y^{-1}}$. 
\end{defi}

We have an isomorphism of TDO's $\calA \# \calA^{-\#} \cong \calD_X$.

\begin{defi}
We define a TDO $f^{\#}\!\calA$ on X by $\calE\!\textit{nd}^{\textit{fin}}_{f^{-1}(\calA)}(f^*(\calA))$
\end{defi}

\begin{prop}[{\cite[Lemma 1.1.5]{KasTan96}}] We have following isomorphisms of TDO's on X. 
\begin{enumerate}
\item $f^{\#}\calD_Y \cong \calD_X$
\item $f^{\#}(\calA_1 \# \calA_2) \cong f^{\#}\!\calA_1 \# f^{\#}\!\calA_2$
\item $f^{\#}\!\calA^{\calL^{a}} \cong (f^{\#}\!\calA)^{(f^{*}\!\calL)^{a}}$
\end{enumerate}
\end{prop}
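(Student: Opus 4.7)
The plan is to prove each part by the same pattern: construct a canonical filtered $\calO_X$-algebra morphism, then verify it is an isomorphism by passing to associated gradeds. The central tool will be property~(5) of Definition~\ref{defTDO}, which canonically identifies $\gr_F^\bullet$ of any TDO with $\Sym^\bullet \Theta_X$; consequently, any filtered morphism of TDO's that induces the identity on $\gr^1 = \Theta_X$ is automatically an isomorphism in every degree.

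For (1), I would use that $f^*\calD_Y$ is the classical transfer bimodule $\calD_{X\to Y}$, which carries a natural left $\calD_X$-action commuting with its right $f^{-1}\calD_Y$-action. By the defining property of $\calE\!\textit{nd}^{\textit{fin}}_{f^{-1}\calD_Y}(f^*\calD_Y)$, this produces a filtered $\calO_X$-algebra morphism $\calD_X \to f^\#\calD_Y$. A local coordinate computation with a vector field $\xi \in \Theta_X$ acting on $1\otimes 1 \in f^*\calD_Y$ shows that the induced map on $\gr^1$ is the identity on $\Theta_X$, which concludes this part.

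For (2), the sheaf $\calA_1 \otimes_{\calO_Y}\calA_2$ is tautologically an $(\calA_1 \# \calA_2,\;\calA_1 \otimes_\bC \calA_2)$-bimodule. Since $f^*$ is monoidal over $\calO$, we have a canonical isomorphism $f^*(\calA_1 \otimes_{\calO_Y}\calA_2)\cong f^*\calA_1 \otimes_{\calO_X} f^*\calA_2$ as a right $f^{-1}(\calA_1 \otimes_\bC \calA_2)$-module. The two left actions of $f^\#\calA_1$ and $f^\#\calA_2$ on the respective factors commute, assemble into an $f^{-1}(\calA_1 \otimes_\bC \calA_2)$-linear action on the tensor product, and, by definition of $\#$ on $X$, produce a filtered morphism $f^\#\calA_1 \# f^\#\calA_2 \to f^\#(\calA_1 \# \calA_2)$. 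Both sides have $\gr^\bullet$ canonically isomorphic to $\Sym^\bullet\Theta_X$, and the map is the identity there, so it is a filtered isomorphism. The argument for (3) is entirely analogous, using the $(\calA^{\calL^a},\calA)$-bimodule $\calA \otimes_{\calO_Y}\calL^a$, whose pullback exhibits precisely the defining bimodule of $(f^\#\calA)^{(f^*\calL)^a}$.

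The main obstacle is primarily bookkeeping: checking that the candidate morphisms in (2) and (3) are genuine $\calO_X$-algebra homomorphisms respecting the order filtrations, which requires tracking the interaction of the two commuting left actions after pullback and verifying that the twist in (3) behaves functorially under $f^*$. Once these compatibilities are in place, property~(5) of Definition~\ref{defTDO} reduces every remaining claim to the trivial identification on the symbol level.
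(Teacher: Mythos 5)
The paper does not prove this proposition: it is cited verbatim from Kashiwara--Tanisaki \cite[Lemma 1.1.5]{KasTan96}, so there is no in-paper argument to compare against. Judged on its own terms, your overall strategy is sound and worth highlighting: once a filtered $\calO_X$-algebra morphism $\phi$ between two TDO's is in hand, compatibility with $\iota$ forces $\sigma_{\calB}\circ\gr^1\!\phi = \sigma_{\calA}$, and since Definition~\ref{defTDO}(5) says $\gr_F^\bullet$ of any TDO is generated in degree one and isomorphic to $\Sym^\bullet\Theta_X$, $\gr\phi$ is an isomorphism in every degree, hence so is $\phi$. Your treatment of (1) via the transfer bimodule $\calD_{X\to Y}=f^*\calD_Y$ is correct.

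There are, however, two genuine gaps. In (2), the action of $f^{\#}\calA_1\#f^{\#}\calA_2$ that you build on $f^*\calA_1\otimes_{\calO_X}f^*\calA_2\cong f^*(\calA_1\otimes_{\calO_Y}\calA_2)$ lands naturally in $\calE\!\textit{nd}^{\textit{fin}}_{f^{-1}(\calA_1\otimes_{\bC}\calA_2)}(f^*(\calA_1\otimes_{\calO_Y}\calA_2))$, whereas the target $f^{\#}(\calA_1\#\calA_2)$ is by definition $\calE\!\textit{nd}^{\textit{fin}}_{f^{-1}(\calA_1\#\calA_2)}(f^*(\calA_1\#\calA_2))$ --- a different right-module and a different centralizer. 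Identifying these (for instance by showing the middle term is itself a TDO receiving morphisms from both sides) is where the content of the lemma sits, and dismissing it as bookkeeping elides the crux. In (3), the statement allows arbitrary $a\in\bC$, and for non-integer $a$ the sheaf $\calL^a$ and hence your proposed bimodule $\calA\otimes_{\calO_Y}\calL^a$ do not exist; the definition of $\calA^{\calL^a}$ in that generality is purely cohomological, via the class $c(\calA)+a\,c_1(\calL)\in H^2(Y,\sigma^{\geq 1}\Omega^\bullet_Y)$. So (3) cannot be proved by pulling back a bimodule; one must instead compare $c$-classes, using $c(f^{\#}\calB)=f^*c(\calB)$ and $c_1(f^*\calL)=f^*c_1(\calL)$, and invoke the classification of TDO's by $H^2(X,\sigma^{\geq 1}\Omega^\bullet_X)$ (or, for $a\in\bZ$, reduce to your bimodule argument after choosing a local trivialization).
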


Next we recall operations on modules over TDO's. 

\begin{defi}~\\
(1) Let $\calN_1 \in D^b_{{\textit hol}}(\calA_1\modu)$ and $\calN_2 \in D^b_{{\textit hol}}(\calA_2\modu)$. 
We say that $\calN_1$ and $\calN_2$ are \emph{non-characteristic} if ${\rm Ch}(\calN_1) \cap {\rm Ch}(\calN_2) \subset T^*_YY$. 
~\\
(2) Let $\calN \in D^b(\calA\modu)$. We say that $\calN$ is \emph{non-characteristic with respect to $f$} if the inclusion
$(X \times_Y Ch(\calN)) \cap T^*_XY \subset X \times_Y T^*_YY$ holds, where $T^*_XY := {\rm Ker} (X \times_Y T^*Y \to T^*X)$. 

\end{defi}

\begin{prop}
The tensor product $\otimes_{\calO_Y}$ induces a functor
\[\totimes : D^b(\calA_1\modu) \times D^b(\calA_2\modu) \to D^b(\calA_1 \# \calA_2\modu).\]  
This functor sends complexes with holonomic cohomologies to that with with holonomic cohomologies. 
\end{prop}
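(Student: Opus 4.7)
I would build the functor in three stages: first define an $\calA_1 \# \calA_2$-module structure on the underived tensor product under suitable flatness, then derive the construction, and finally verify preservation of holonomicity.

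For the underived construction, suppose $\calM_i$ is an $\calA_i$-module that is flat over $\calO_Y$ for $i=1,2$. By the very definition $\calA_1 \# \calA_2 = \calE\!\textit{nd}^{\textit{fin}}_{\calA_1 \otimes_\bC \calA_2}(\calA_1 \otimes_{\calO_Y} \calA_2)$, the sheaf $\calA_1 \otimes_{\calO_Y} \calA_2$ is a $(\calA_1 \# \calA_2,\ \calA_1 \otimes_\bC \calA_2)$-bimodule. The canonical isomorphism of $\calO_Y$-modules
\[
(\calA_1 \otimes_{\calO_Y} \calA_2) \otimes_{\calA_1 \otimes_\bC \calA_2} (\calM_1 \otimes_\bC \calM_2) \;\cong\; \calM_1 \otimes_{\calO_Y} \calM_2
\]
then endows the right-hand side with a natural $\calA_1 \# \calA_2$-module structure. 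To derive this construction on bounded complexes, I would use resolutions by $\calA_i$-modules of the form $\calA_i \otimes_{\calO_Y} \calF$ with $\calF$ a flat $\calO_Y$-module; such modules are flat over $\calO_Y$. Since $Y$ is smooth of finite dimension $d$, every $\calO_Y$-module admits a flat resolution of length at most $d$, so truncating yields bounded $\calO_Y$-flat resolutions and a well-defined derived tensor product landing in $D^b(\calA_1 \# \calA_2\modu)$; independence from the chosen resolutions is standard.

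For the preservation of holonomicity, I would reinterpret the tensor product as a derived inverse image along the diagonal. On $Y \times Y$ consider the external tensor TDO $\calA_1 \boxtimes \calA_2 := p_1^{\#}\calA_1 \,\#\, p_2^{\#}\calA_2$, where $p_1, p_2: Y\times Y \to Y$ are the projections; the external product $\calM_1 \boxtimes \calM_2$ is a $\calA_1 \boxtimes \calA_2$-module whose characteristic variety is $Ch(\calM_1) \times Ch(\calM_2)$, Lagrangian in $T^*(Y\times Y)$, so it is holonomic. Applying the previous proposition to the diagonal embedding $\Delta: Y \hookrightarrow Y \times Y$ gives $\Delta^{\#}(\calA_1 \boxtimes \calA_2) \cong \calA_1 \# \calA_2$, and a direct computation with the flat resolutions above produces a canonical isomorphism $L\Delta^{*}(\calM_1 \boxtimes \calM_2) \cong \calM_1 \otimes^{L}_{\calO_Y} \calM_2$ of $\calA_1 \# \calA_2$-complexes (up to a cohomological shift that plays no role here).

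The remaining and central point is that the derived inverse image of a holonomic TDO-module along a morphism of smooth varieties is holonomic. For untwisted $\calD$-modules this is Kashiwara's theorem; the TDO case reduces to it locally, since any TDO is Zariski-locally isomorphic to $\calD_Y$ and holonomicity is a local condition on the characteristic variety. Granting this, the derived tensor product $\calM_1 \otimes^{L}_{\calO_Y} \calM_2$ has holonomic cohomology whenever $\calM_1$ and $\calM_2$ do, finishing the proof. I expect this invocation of preservation-of-holonomicity-under-pullback to be the main obstacle: the boundedness and the $\calA_1 \# \calA_2$-action are essentially formal consequences of the smoothness of $Y$ and the definition of $\#$, whereas the holonomicity bound requires genuine control of the characteristic variety under a non-transverse pullback (the diagonal is never non-characteristic for arbitrary holonomic inputs), so one really needs Kashiwara's theorem rather than a naive estimate.
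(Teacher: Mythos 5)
The paper states this proposition without proof, treating it as standard background from the TDO literature (the surrounding propositions are drawn from Kashiwara--Tanisaki \cite{KasTan96}), so there is no paper proof to compare against. Your outline is a correct rendering of the canonical argument: construct the underived tensor via the $(\calA_1\#\calA_2,\,\calA_1\otimes_\bC\calA_2)$-bimodule $\calA_1\otimes_{\calO_Y}\calA_2$, derive it using bounded $\calO_Y$-flat resolutions (finiteness of flat dimension uses smoothness of $Y$), then identify $\calM_1\otimes^{\bL}_{\calO_Y}\calM_2$ with the $!$-pullback along the diagonal of the external product $\calM_1\boxtimes\calM_2$ on $Y\times Y$, whose characteristic variety $Ch(\calM_1)\times Ch(\calM_2)$ is Lagrangian, and finally invoke preservation of holonomicity under $f^!$ (Kashiwara's theorem, which the paper also cites without proof right after defining $f^!$). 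You are also right that this detour is unavoidable: the diagonal is generally not non-characteristic for the pair $(\calM_1,\calM_2)$, so no naive additivity estimate on characteristic varieties suffices. The only caveat worth noting is expositional rather than logical: the paper introduces the pullback functor $f^!$ only \emph{after} this proposition, so your argument reorders the presentation; since everything is drawn from the same block of Kashiwara--Tanisaki preliminaries, there is no circularity.
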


\begin{defi}
We define the duality functor $\bD : D^b_{{\textit hol}}(\calA\modu) \to D^b_{{\textit hol}}(\calA^{-\#}\modu)$ by assigning $\calM \mapsto \bR\calHom_{\calA}(\calM, \calA) \otimes \omega^{-1}_Y[\dim Y]$. 
\end{defi}

The following propositions state basic properties of the duality functor.

\begin{prop}[{\cite[Proposition 1.2.1]{KasTan96}}]\label{doubledual}~\\
We have an isomorphism of functors
$\bD \circ \bD \cong \id$ on ${D^b_{{\textit hol}}(\calA\modu)}$. 
\end{prop}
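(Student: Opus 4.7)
The plan is to construct a natural biduality morphism $\mathrm{id} \to \bD \circ \bD$ and then verify it is a quasi-isomorphism for holonomic $\calM$ by reducing to the classical biduality for ordinary holonomic D-modules via local trivialization of the TDO.

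For the construction, one starts from the tautological evaluation pairing $\calM \otimes_\bC \bR\calHom_\calA(\calM, \calA) \to \calA$, which gives a canonical morphism $\calM \to \bR\calHom_{\calA^{\op}}(\bR\calHom_\calA(\calM, \calA), \calA)$. Tracing through the two definitions of $\bD$ one finds
\[
\bD\bD\calM = \bR\calHom_{\calA^{-\#}}\bigl(\bR\calHom_\calA(\calM, \calA), \calA^{-\#}\bigr),
\]
the two twists by $\omega_Y^{-1}[\dim Y]$ cancelling after one pulls them out of the inner $\bR\calHom$. Identifying $\calA^{-\#}$ with $\calA^{\op}$ tensored appropriately with $\Omega_Y^{-1}$, the evaluation morphism yields the desired natural transformation $\calM \to \bD\bD\calM$. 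This construction is entirely formal once the bookkeeping of shifts, opposite rings, and twists by $\omega_Y$ is in place.

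To show that this morphism is a quasi-isomorphism when $\calM$ has holonomic cohomology, observe that the statement is local on $Y$: both the formation of $\bD$ (computed via local resolutions of $\calA$ as an $\calA$-module) and the condition of being holonomic are local. Cover $Y$ by affine open subsets $U$ on which the TDO trivializes, $\calA|_U \cong \calD_U$. By the formulas $c(\calA^{\op}) = -c(\calA) + c_1(\Omega_Y)$ and $c(\calA^{\calL^a}) = c(\calA) + a\,c_1(\calL)$, the class of $\calA^{-\#}$ also vanishes on $U$, so $\calA^{-\#}|_U \cong \calD_U$ as well. Under these trivializations the biduality morphism restricts to the classical biduality morphism for $\calD_U$-modules, which is known to be an isomorphism on holonomic objects (see \cite[Chapter VIII]{Bor87}). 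Since an object of $D^b(\calA\modu)$ is a quasi-isomorphism if and only if it is so locally, this completes the argument.

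The main obstacle is purely bookkeeping: verifying that the evaluation map, once twisted by $\omega_Y^{-1}[\dim Y]$ twice and interpreted as landing in a module over $(\calA^{-\#})^{-\#}$, canonically recovers the original $\calA$-module structure on $\calM$. The substantive content of the proposition is the classical holonomic biduality, and its invocation is justified precisely because TDOs admit local trivializations making $\bD$ locally identical to the D-module dual.
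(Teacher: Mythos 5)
The paper does not prove this proposition; it imports it verbatim from Kashiwara--Tanisaki \cite[Proposition 1.2.1]{KasTan96}, so there is no paper proof to compare against. Your construction of the biduality morphism and the observation that the claim is local on $Y$ are both sound, but the step where you ``cover $Y$ by affine open subsets $U$ on which the TDO trivializes'' contains a genuine gap.

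It is not true that an arbitrary TDO is Zariski-locally isomorphic to $\calD_U$. The obstruction to trivializing near a point is the stalk of the cohomology sheaf in degree $2$ of $\sigma^{\geq 1}\Omega^\bullet_Y$, which by the triangle $\sigma^{\geq 1}\Omega^\bullet_Y \to \Omega^\bullet_Y \to \calO_Y$ is the sheaf $\Omega^{2,cl}_Y/d\Omega^1_Y$ of closed $2$-forms modulo exact ones. In the Zariski topology the algebraic Poincar\'e lemma fails and this sheaf need not vanish. Concretely, on $Y = (\bC^*)^2$ the closed algebraic $2$-form $\frac{dx}{x}\wedge\frac{dy}{y}$ is not exact on any nonempty Zariski open $U$ (it pairs to $(2\pi i)^2$ against a real torus, and by a weight argument the restriction $H^2_{\mathrm{dR}}((\bC^*)^2)\to H^2_{\mathrm{dR}}(U)$ is injective). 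Twisting the bracket of $\calO_Y\oplus\Theta_Y$ by this form produces a Picard Lie algebroid, hence a TDO satisfying all the axioms of Definition \ref{defTDO}, whose restriction to every Zariski open is nontrivial. The cover you need therefore does not exist in general. (The introductory sentence in the paper calling a TDO ``locally isomorphic to the sheaf of differential operators'' is likely what misled you; the formal Definition \ref{defTDO} does not entail Zariski-local triviality.)

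There are two ways to repair this. For the present paper it is harmless: every TDO on a partial flag variety has class a $\bC$-linear combination of first Chern classes of line bundles, and line bundles are Zariski-locally trivial, so $\calD^\lambda_{G/P_I}$ and $\calD^\lambda_{G/P_I}{}^{-\#}$ do trivialize on an affine cover and your reduction then works. But for Proposition \ref{doubledual} in the stated generality, the correct route is not to reduce to $\calD_U$ but to run the holonomic biduality argument directly on $\calA$: locally choose a free resolution, observe that $\gr_F\calA\cong\Sym^\bullet\Theta_Y$ is exactly the same commutative ring as for $\calD_Y$, and use the codimension estimate coming from the Lagrangian characteristic variety to show the relevant higher Ext sheaves vanish. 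The proof is parallel to the $\calD_Y$-module case, not a formal corollary of it obtained by local trivialization.
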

 
\begin{prop}[{\cite[Proposition 1.2.2]{KasTan96}}]~\\
Assume that $\calN_1 \in D^b_{{\textit hol}}(\calA_1\modu)$ and $\calN_2 \in D^b_{{\textit hol}}(\calA_2\modu)$ are non-characteristic. Then we have an isomorphism \[\bD(\calN_1) \totimes \bD(\calN_2) \cong \bD(\calN_1 \totimes \calN_2). \]
\end{prop}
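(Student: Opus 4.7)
The plan is to construct a canonical morphism from the left-hand side to the right-hand side and then verify it is an isomorphism by reducing to the non-characteristic inverse image theorem for ordinary D-modules.

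First, I would construct the natural morphism. The multiplication of $\calA_1 \# \calA_2$ restricts to a pairing $\calA_1 \otimes_{\calO_Y} \calA_2 \to \calA_1 \# \calA_2$. Composing this with the canonical cup product
\[\bR\calHom_{\calA_1}(\calN_1,\calA_1) \otimes^{\bL}_{\calO_Y} \bR\calHom_{\calA_2}(\calN_2,\calA_2) \to \bR\calHom_{\calA_1 \otimes_\bC \calA_2}(\calN_1 \otimes^{\bL}_{\calO_Y} \calN_2, \calA_1 \otimes_{\calO_Y} \calA_2)\]
and collecting the two factors $\omega_Y^{-1}[\dim Y]$ built into the definition of $\bD$ yields a natural transformation $\bD(\calN_1) \totimes \bD(\calN_2) \to \bD(\calN_1 \totimes \calN_2)$, provided one first reconciles the two TDO structures that a priori act on the two sides (see the last paragraph).

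To verify this morphism is an isomorphism, I would reinterpret both sides via the external tensor product on $Y \times Y$. The formal identity $\bD(\calN_1 \boxtimes \calN_2) \cong \bD(\calN_1) \boxtimes \bD(\calN_2)$ holds because external product decouples the $\calHom$ functors on the disjoint factors, and carries no non-characteristic hypothesis. For the diagonal embedding $\Delta : Y \hookrightarrow Y \times Y$, the assumption $\mathrm{Ch}(\calN_1) \cap \mathrm{Ch}(\calN_2) \subset T^*_Y Y$ is exactly the statement that $\Delta$ is non-characteristic with respect to $\calN_1 \boxtimes \calN_2$, and under this condition the appropriately shifted pullback along $\Delta$ recovers $\calN_1 \totimes \calN_2$. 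The claim then follows from the general fact that the duality functor commutes with non-characteristic inverse image. Since the question is local on $Y$ and TDOs are locally isomorphic to $\calD_Y$, this fact itself reduces to the classical non-characteristic duality theorem for ordinary D-modules.

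The main obstacle is the bookkeeping. Using $c(\calA^{-\#}) = -c(\calA) + c_1(\Omega_Y)$ and the additivity of $c$ under $\#$, one computes that $c(\calA_1^{-\#} \# \calA_2^{-\#})$ and $c((\calA_1 \# \calA_2)^{-\#})$ differ by $c_1(\Omega_Y)$; consequently $\bD(\calN_1) \totimes \bD(\calN_2)$ and $\bD(\calN_1 \totimes \calN_2)$ a priori live over different TDOs, and one must carefully absorb a twist by $\omega_Y$ (which can be traced to the relative canonical sheaf of $\Delta$) in order to identify them. Once this twist and the homological shifts $[\dim Y]$ are tracked consistently, the substance of the proof is the classical non-characteristic duality theorem invoked above.
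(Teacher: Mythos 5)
This statement is cited verbatim from Kashiwara--Tanisaki and is not proved in the paper, so there is no in-paper argument to compare against; I can only assess your proposal on its own merits. Your overall plan --- express $\totimes$ as a shifted $\Delta^!$ of an external product, observe that the non-characteristicity hypothesis is precisely what makes $\Delta$ non-characteristic for $\calN_1 \boxtimes \calN_2$, and invoke compatibility of $\bD$ with non-characteristic pullback, reducing locally to untwisted $\calD$-modules --- is the standard and correct route, and is essentially what Kashiwara--Tanisaki do.

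There is, however, a slip in the bookkeeping paragraph that you should fix. You used $c(\calA^{-\#}) = -c(\calA) + c_1(\Omega_Y)$; that is the formula for $c(\calA^{\op})$. From $\calA^{-\#} = (\calA^{\op})^{\Omega_Y^{-1}}$ one instead gets $c(\calA^{-\#}) = c(\calA^{\op}) - c_1(\Omega_Y) = -c(\calA)$, which is forced anyway by the paper's assertion that $\calA \# \calA^{-\#} \cong \calD_Y$. Consequently $c(\calA_1^{-\#} \# \calA_2^{-\#}) = c((\calA_1 \# \calA_2)^{-\#}) = -c(\calA_1)-c(\calA_2)$, and (combining $\calA_i \# \calA_i^{-\#} \cong \calD_Y$ with $\calA_1 \# \calA_2 \# (\calA_1\#\calA_2)^{-\#} \cong \calD_Y$) the two sides of the claimed isomorphism already live over a canonically identified TDO; there is no $c_1(\Omega_Y)$ discrepancy to ``absorb.'' The genuine twist you need to track is different in nature: $\bD(\calN_1) \totimes \bD(\calN_2)$ carries the factor $\omega_Y^{-1}[\dim Y]$ twice (once from each $\bD$), while $\bD(\calN_1 \totimes \calN_2)$ carries it once, and the surplus $\omega_Y^{-1}[\dim Y]$ is exactly $\omega_\Delta[\operatorname{codim}\Delta]$ supplied by the $!$-pullback along the diagonal. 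So your instinct to trace the twist to $\Delta$ is right, but the diagnosis via a mismatch of TDO classes is not; once that is corrected the argument is sound.
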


\begin{defi}(Pullback)

We define the functor $f^!: D^b(\calA\modu) \to D^b(f^{\#}\!\calA\modu)$ by 
\[\calM \mapsto f^!\calM := f^{*}(\calA) \otimes^{\bL}_{f^{-1}(\calA)} f^{-1}(\calM).\]
This functor preserves holonomicity. 

We define the functor $f^+: D^b_{{\textit hol}}(\calA\modu) \to D^b_{{\textit hol}}(f^{\#}\!\calA\modu)$ by $f^+ := \bD \circ f^! \circ \bD$. 

\end{defi}

Note that we have a canonical isomorphism $f^!\calM \cong f^*\calM$ of $\calO_X$-modules. 

\begin{defi}(Pushforward)

We define the functor $f_+: D^b(f^{\#}\!\calA\modu) \to D^b(\calA\modu)$ by 
\[\calM \mapsto \bR f_*(( f^!(\calA^{\textit op}) \overset{\#}\otimes \omega_f) \otimes^{\bL}_{f^{-1}\calA} f^{-1}(\calM)).\]
This functor preserves holonomicity. 

We define the functor $f_+: D^b_{{\textit hol}}(f^{\#}\!\calA\modu) \to D^b_{{\textit hol}}(\calA\modu) $ by $f_+ := \bD \circ f_! \circ \bD$. 

\end{defi}

\begin{prop}[{\cite[Proposition 1.2.4]{KasTan96}}]\label{nc}~\\
(i) Let $\calN \in D^b(\calA\modu)$ be non-characteristic with respect to $f$. 
Then we have $f^+\calN \cong f^!\calN$. 

The non-characteristic assumption holds automatically if $f$ is smooth. 

\noindent(ii) There is a morphism of functors $f_! \to f_+$. 

For $\calM \in D^b_{hol}(f^{\#}\!\calA)$ such that $\Supp(\calM) \to Y$ is projective, the morphism of functor induces an isomorphism $f_!(\calM) \cong f_+(M)$. 

If $f$ is projective then the assumption holds automatically. 

\end{prop}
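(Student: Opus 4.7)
The approach for both parts is to reduce the assertions to well-known facts about untwisted $\calD$-modules, exploiting that the statements are local on $Y$.

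For part (i), I would note that $f^!$ and $f^+$ commute with restriction along open immersions on $Y$. Choosing an affine open cover on which $\calA$ trivialises to $\calD_Y$ (and hence $f^{\#}\calA$ trivialises to $\calD_X$), I reduce the existence of the isomorphism $f^+\calN \cong f^!\calN$ under the non-characteristic hypothesis to the classical non-characteristic pullback theorem for $\calD$-modules (Kashiwara). The content there is that non-characteristicity forces the appropriate $\mathrm{Tor}$-vanishing that kills the obstruction distinguishing $f^!$ from its dualised version $f^+$; the resulting isomorphism is canonical, so it glues to a global isomorphism on $X$. For the automatic clause when $f$ is smooth, the relative cotangent sequence $0 \to f^*\Omega_Y \to \Omega_X \to \Omega_f \to 0$ is exact with injective left arrow; dualising shows that $X \times_Y T^*Y \to T^*X$ is injective, so $T^*_X Y$ coincides with the zero section of $X \times_Y T^*Y$, which is precisely $X \times_Y T^*_Y Y$. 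The non-characteristic inclusion is then automatic.

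For part (ii), I would construct the morphism $f_! \to f_+$ by lifting the canonical sheaf-theoretic transformation $\bR f_! \to \bR f_*$ applied to the integration complex $(f^!(\calA^{\op}) \totimes \omega_f) \otimes^{\bL}_{f^{-1}\calA} f^{-1}(\calM)$ appearing in the definition of $f_+$; this requires checking that $f_!$, defined in the holonomic setting as $\bD \circ f_+ \circ \bD$, is equivalently computed by $\bR f_!$ applied to the same integration complex. Once this is established, the isomorphism under the proper-support hypothesis is immediate: if $\iota : Z := \Supp(\calM) \hookrightarrow X$ is the closed embedding of the support and $g : Z \to Y$ is the induced proper morphism, then $\iota_! = \iota_*$ for the closed immersion and $\bR g_! \cong \bR g_*$ by properness of $g$, so the transformation $f_!(\calM) \to f_+(\calM)$ is an isomorphism. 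Projectivity of $f$ makes $\Supp(\calM) \to Y$ proper for any $\calM$, giving the final assertion.

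The main obstacle is the reconciliation in part (ii) between the two possible descriptions of $f_!$: the intrinsic one as $\bD \circ f_+ \circ \bD$, and the concrete one obtained by replacing $\bR f_*$ by $\bR f_!$ in the definition of $f_+$. Showing that these coincide amounts to a twisted form of Grothendieck--Verdier duality for TDO modules, and the careful tracking of the twisting data $\omega_f$ and $\calA^{\op}$ is the most delicate part of the argument. Once that compatibility is in place, both (i) and (ii) follow from the classical $\calD$-module results combined with routine local-to-global gluing.
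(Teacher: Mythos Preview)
The paper does not prove this proposition; it is quoted verbatim from \cite[Proposition 1.2.4]{KasTan96} and used as a black box. There is therefore no proof in the paper to compare against, and any assessment has to be of your sketch on its own terms.

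Your outline for part~(i) is essentially right in spirit, but the gluing step is more delicate than you indicate. Constructing local isomorphisms over a trivialising cover and then asserting they patch requires knowing that the local isomorphisms agree on overlaps, which is not automatic from ``canonicity'' alone. The cleaner route, and the one implicit in the source \cite{KasTan96}, is to construct a \emph{global} morphism $f^+\calN \to f^!\calN$ first (via the duality formalism and an integration/trace map) and then check it is an isomorphism locally, where the question reduces to the untwisted case. Your argument for the smooth clause is fine.

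For part~(ii) there is a genuine conceptual gap. You propose to realise $f_!$ as ``$\bR f_!$ applied to the integration complex'', but in the Zariski topology there is no well-behaved derived functor $\bR f_!$ for non-proper $f$: the compactly supported pushforward of sheaves is an analytic or \'etale notion, not a Zariski one. In the algebraic setting $f_!$ for holonomic twisted $\calD$-modules is \emph{defined} as $\bD\circ f_+\circ\bD$, and the morphism $f_!\to f_+$ is constructed by choosing a relative compactification $f=\bar f\circ j$ with $j$ an open immersion and $\bar f$ proper, using the adjunction map $j_!\to j_+$ together with the identification $\bar f_!\cong\bar f_+$ coming from relative duality for proper morphisms. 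Your ``main obstacle'' paragraph correctly identifies that a twisted Grothendieck--Verdier duality statement is the crux, but the specific mechanism you propose (lifting a sheaf-theoretic $\bR f_!\to\bR f_*$) is not available in this setting. Once the morphism is constructed via compactification, your argument for the isomorphism under the proper-support hypothesis is correct.
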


\begin{prop}[{Monoidal property and projection formula \cite[Proposition 1.2.5]{KasTan96}}]\label{pullmon}~\\
{\rm (i)} For $\calN_1 \in D^b_{{\textit hol}}(\calA_1\modu), \calN_2 \in D^b_{{\textit hol}}(\calA_2\modu)$, we have an isomorphism $f^! (\calN_1 \totimes \calN_2) \cong f^!(\calN_1) \totimes f^!(\calN_2)$. 

\noindent{\rm (ii)} For $\calM \in D^b_{{\textit hol}}(f^{\#}\!\calA_1\modu)$ and $\calN \in D^b_{{\textit hol}}(\calA_2\modu)$, we have $f_+(\calM \totimes f^!(\calN)) \cong (f_+(\calM) \totimes \calN)$. 
\end{prop}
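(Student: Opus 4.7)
The plan is to reduce both statements to classical facts for $\calO$-modules, combined with careful bookkeeping of the TDO-module structures.

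For (i), I would first look only at the underlying $\calO_X$-complexes. By definition $f^!\calN_i \cong \calO_X \otimes^{\bL}_{f^{-1}\calO_Y} f^{-1}\calN_i$, and the underlying $\calO_Y$-complex of $\calN_1 \totimes \calN_2$ is $\calN_1 \otimes^{\bL}_{\calO_Y} \calN_2$. The standard commutation of derived inverse image with derived tensor product for quasi-coherent $\calO$-modules yields a canonical isomorphism
$$f^!(\calN_1 \totimes \calN_2) \simeq f^!\calN_1 \otimes^{\bL}_{\calO_X} f^!\calN_2$$
of $\calO_X$-complexes. To promote this to an isomorphism of modules over $f^{\#}(\calA_1 \# \calA_2) \cong f^{\#}\calA_1 \# f^{\#}\calA_2$ I would check that the two natural actions of $f^{\#}\calA_i$ (coming from the $\calA_i$-action on $\calN_i$ via the relative connection built into $f^!$) agree on both sides. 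This compatibility is local on $X$ and $Y$, and in trivializations where $\calA_i$ becomes $\calD_Y$ and $f^{\#}\calA_i$ becomes $\calD_X$ it reduces to the familiar statement that $\calD$-module inverse image commutes with the external tensor product of ordinary D-modules.

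For (ii), I would unwind the definition
$$f_+\bigl(\calM \totimes f^!\calN\bigr) = \bR f_*\Bigl(\bigl(f^!(\calA_1^{\op}) \totimes \omega_f\bigr) \otimes^{\bL}_{f^{-1}\calA_1} f^{-1}\bigl(\calM \totimes f^!\calN\bigr)\Bigr).$$
Since $f^!\calN \cong \calO_X \otimes^{\bL}_{f^{-1}\calO_Y} f^{-1}\calN$ and $\totimes$ is $\otimes^{\bL}_{\calO_X}$ on the underlying complexes, associativity of tensor products lets me pull out the $f^{-1}\calN$-factor, rewriting the inner complex as
$$\bigl((f^!(\calA_1^{\op}) \totimes \omega_f) \otimes^{\bL}_{f^{-1}\calA_1} f^{-1}\calM\bigr) \otimes^{\bL}_{\calO_X} f^{-1}\calN.$$
Here the tensor products over $f^{-1}\calO_Y$ and $f^{-1}\calA_1$ no longer touch the $\calN$-factor. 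The classical projection formula $\bR f_*(\calF \otimes^{\bL}_{\calO_X} f^{-1}\calN) \simeq \bR f_*\calF \otimes^{\bL}_{\calO_Y} \calN$ then converts the expression into $f_+\calM \totimes \calN$, as required.

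The main obstacle in both parts is not conceptual but bookkeeping: one must ensure that the candidate comparison maps are $f^{\#}\calA_i$-linear in (i) and $\calA_2$-linear in (ii). To handle this rigorously I would replace the inputs by quasi-isomorphic complexes consisting of $\calA_i$-flat, respectively $f^{-1}\calA_1$-flat, objects so that all derived tensor products become honest ones, construct the comparison maps at the chain level, and verify compatibility of the TDO-actions by reducing to local computations where each TDO is an ordinary sheaf of differential operators and the Leibniz rule settles the claim.
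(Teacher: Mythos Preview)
The paper does not prove this proposition; it is quoted verbatim from \cite[Proposition 1.2.5]{KasTan96} as background material in \S\ref{sectTDO}, so there is no ``paper's own proof'' to compare against. Your sketch is the standard argument and is essentially what one finds in the Kashiwara--Tanisaki framework: establish the isomorphism on underlying $\calO$-complexes using the usual compatibility of $\bL f^*$ with $\otimes^{\bL}$ (for (i)) and the sheaf-theoretic projection formula (for (ii)), then check that the TDO-module structures match, which is a local verification reducing to the untwisted case.

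Two points of caution in your write-up for (ii). First, you have copied the paper's displayed definition of $f_+$ literally, including the spurious $f^{-1}$ applied to $\calM$ (which already lives on $X$) and the tensor over $f^{-1}\calA$ rather than $f^{\#}\calA$; these are evidently typos in the paper, and your subsequent manipulations silently correct some but not all of them. Second, and more substantively, the pushforward $f_+$ on the left-hand side is for the TDO $\calA_1 \# \calA_2$, not for $\calA_1$ alone, so the transfer bimodule in your first displayed line should be the one for $\calA_1 \# \calA_2$. The argument still goes through because the transfer bimodule for a $\#$-product splits accordingly, but you should say this explicitly rather than writing the $\calA_1$-transfer module from the start.
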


\begin{prop}[{Base change isomorphism \cite[Proposition 1.2.6]{KasTan96}}]\label{bc}~\\
Let
\[
\xymatrix{
X^{\prime} \ar[r]^{f^{\prime}} \ar[d]^{g^{\prime}} &  Y^{\prime} \ar[d]^{g}  \\
X \ar[r]^{f} & Y
}
\]
be a cartesian diagram of smooth varieties. Then for $\calM \in D^b_{{\textit hol}}(g^{\#}\!\calA\modu)$, we have isomorphisms 
$ g^{\prime}_! (f^{\prime+}(\calM)) \cong f^{+}(g^{}_!(\calM))$, $ g^{\prime}_+ (f^{\prime!}(\calM)) \cong f^{!}(g^{}_+(\calM))$. 

\end{prop}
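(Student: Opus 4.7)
The plan is to establish the second isomorphism $g'_+ f'^! \cong f^! g_+$ first, and then deduce the first by applying the duality functor $\bD$. Recalling the defining identities $f^+ = \bD \circ f^! \circ \bD$ and $g_! = \bD \circ g_+ \circ \bD$, together with the double duality $\bD \circ \bD \cong \id$ from Proposition \ref{doubledual} and the analogous relations for the primed morphisms, one sees that an isomorphism $g'_+ f'^! \cong f^! g_+$ on all of $D^b_{\mathit{hol}}(g^\#\!\calA\modu)$ transforms, after substituting $\bD\calM$ for $\calM$ and applying $\bD$ to both sides, into the desired $g'_!f'^+ \cong f^+ g_!$.

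To produce the base change isomorphism $g'_+ f'^! \cong f^! g_+$, I would first construct a natural morphism $f^! g_+ \to g'_+ f'^!$. Writing out the two sides using the transfer bimodules in the definitions of $f^!$ and $g_+$, this morphism comes from the adjunction $(f'^{-1}, \bR f'_*)$ combined with the topological base change morphism $f^{-1}\bR g_* \to \bR g'_* f'^{-1}$ available on the cartesian square, together with the canonical compatibilities $g'^{\#} f^{\#}\!\calA \cong f'^{\#} g^{\#}\!\calA$ of TDO's and the compatibility $\omega_{g'} \cong g'^*\omega_g$ of relative dualizing sheaves (which holds since the square is cartesian and everything is smooth). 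This step is formal and produces a globally defined natural transformation.

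Checking that this natural transformation is an isomorphism is a local question on $Y$, since all the functors involved (direct and inverse image of $\calO$-modules, tensor products) commute with restriction to open subsets. Localizing $Y$ sufficiently, the TDO $\calA$ becomes isomorphic to $\calD_Y$, and then $g^{\#}\!\calA \cong \calD_{Y'}$, $f^{\#}\!\calA \cong \calD_X$, $(f^{\#}\!\calA)^{\mathit{op}} \cong \calD_X^{\mathit{op}}$, and similarly for the primed TDO's, with matching transfer bimodule identifications. This reduces the statement to the classical base change for ordinary D-modules on smooth varieties. The latter is then proved by factoring $f = p_2 \circ \gamma_f$ into a closed embedding $\gamma_f \colon X \hookrightarrow X \times Y$ and the smooth projection $p_2 \colon X \times Y \to Y$, using that the base change formula is transitive in composition of cartesian squares: for the smooth factor one uses that $p_2^!$ is (up to a shift) the $\calO$-module pullback and reduces to flat base change for quasi-coherent sheaves, and for the closed embedding one uses Kashiwara's equivalence to identify both sides explicitly.

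The main obstacle is bookkeeping the transfer bimodule structure under pullback and pushforward, in particular matching up the bimodule $f^!(\calA^{\mathit{op}}) \overset{\#}\otimes \omega_f$ appearing in $f_+$ with its counterpart on the primed side through the canonical isomorphism of TDO's $g'^{\#} f^{\#}\!\calA \cong f'^{\#} g^{\#}\!\calA$. Once these identifications are made compatibly, the local reduction to untwisted D-modules and the classical factorization argument proceed without further friction, and duality transports the result to the statement involving $f^+$ and $g_!$.
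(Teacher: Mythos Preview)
The paper does not supply its own proof of this proposition: it is simply quoted from \cite[Proposition 1.2.6]{KasTan96} and used as a black box. There is therefore nothing in the paper to compare your proposal against.

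That said, your outline is the standard route to the twisted base change isomorphism and is essentially correct. One small slip: the compatibility of relative dualizing sheaves on the cartesian square reads $\omega_{g'} \cong f'^{*}\omega_g$, not $g'^{*}\omega_g$ (the latter does not even typecheck, since $\omega_g$ lives on $Y'$ while $g'$ has source $X'$ and target $X$). With that correction, your plan --- construct the comparison morphism from the sheaf-theoretic base change and the transfer bimodule identifications, reduce locally to the untwisted case, then factor $f$ as a closed embedding followed by a projection and invoke flat base change and Kashiwara's equivalence --- is exactly how the result is proved in the untwisted setting, and the passage to $f^+$, $g_!$ via $\bD$ is as you describe.
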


\subsubsection{Sheaves of twisted differential operators on homogeneous spaces}

Let $G$ be an algebraic group and $X$ be a smooth $G$-variety. We denote by $\mu$ the action $G \times X \to X$ and by $p$ the projection $G \times X \to X$. Recall that a quasi-coherent $\calO_X$-module $\calF$ with an isomorphism $\beta: \mu^*\calF \to p^*\calF$ is called \emph{$G$-equivariant} if $\beta$ satisfies the compatibility conditions (4.4.2) and (4.4.3) of \cite{Kas89}. We denote by $\QCoh_G(X)$ the category of $G$-equivariant quasi-coherent $\calO_X$-modules. 
A TDO $\calA$ with an isomorphism of TDO $\alpha : \mu^\#\!\calA \to p^\#\!\calA$ is called \emph{$G$-equivariant TDO} if the compatibility conditions (4.6.1) and (4.6.2) of \cite{Kas89} are satisfied. 
Let $\calA$ be a $G$-equivariant TDO. An $\calA$-module $\calM$ which is a $G$-equivariant quasi-coherent $\calO_X$-module with $\beta$ is \emph{weakly $G$-equivariant} if $\beta$ is a homomorphism of $p^*\!\calA$-modules.

Now let $X$ be a homogeneous $G$-variety. The action gives rise to a homomorphism of Lie algebras $\fg \to \Gamma(\Theta_X)$. 
Fix a point $x \in X$. Let $G_x$ be the stabilizer of $x$ in $G$ and $\fg_x$ be its Lie algebra. For a quasi-coherent sheaf $\calF$ on $X$, $\calF(x)$ denotes its fiber over $x$. We have the following equivalence of categories. 

\begin{prop}[{\cite[Theorem 4.8.1]{Kas89}}]\label{equivO}
The functor $\QCoh_G(X) \to \Rep(G_x)$ which sends $\calF \in \QCoh_G(X)$ to $\calF(x)$ is an equivalence of abelian categories. 
\end{prop}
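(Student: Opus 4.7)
The plan is to construct an explicit quasi-inverse $\Psi : \Rep(G_x) \to \QCoh_G(X)$ to the fiber functor $\Phi : \calF \mapsto \calF(x)$. Let $\pi : G \to G/G_x = X$ be the orbit map, which is a faithfully flat principal $G_x$-bundle (and étale-locally trivial since we work over $\bC$). For a rational $G_x$-representation $V$, set
\[
\Psi(V) := \bigl(\pi_*(\calO_G \otimes_\bC V)\bigr)^{G_x},
\]
where $G_x$ acts diagonally, via right translation on $\calO_G$ and via the given representation on $V$; equivalently, $\Psi(V)$ is the sheaf of sections of the associated fiber bundle $G \times^{G_x} V \to X$. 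Since $\pi$ trivializes étale-locally on $X$, the sheaf $\Psi(V)$ is quasi-coherent and locally isomorphic to $\calO_X \otimes_\bC V$. A canonical $G$-equivariant structure on $\Psi(V)$ is induced by the left translation action of $G$ on $G$, making $\Psi$ a well-defined functor.

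For the composition $\Phi \circ \Psi$: the fiber $\Psi(V)(x)$ is, by construction, the fiber over $x$ of the associated bundle $G \times^{G_x} V$, which is canonically $V$ as a $G_x$-representation. For the composition $\Psi \circ \Phi$: given $\calF \in \QCoh_G(X)$, the equivariance isomorphism $\beta : \mu^*\calF \xrightarrow{\sim} p^*\calF$ on $G \times X$, restricted along the section $g \mapsto (g, x)$ of the second projection, furnishes a canonical isomorphism
\[
\pi^*\calF \xrightarrow{\sim} \calO_G \otimes_\bC \calF(x)
\]
of quasi-coherent $\calO_G$-modules. The cocycle axioms (4.4.2)--(4.4.3) of \cite{Kas89} imply that this isomorphism intertwines the natural $G_x$-equivariant structures on both sides (right translation on $\calO_G$ combined with the stabilizer action on $\calF(x)$). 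Applying the descent functor $\calE \mapsto (\pi_*\calE)^{G_x}$ along $\pi$ then yields a natural isomorphism $\calF \xrightarrow{\sim} \Psi(\calF(x))$, whose $G$-equivariance is a further consequence of the same cocycle axioms.

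The main obstacle is carrying out the descent step rigorously. Because $\pi$ is a faithfully flat principal $G_x$-bundle, faithfully flat descent gives an equivalence between $\QCoh(X)$ and the category of $G_x$-equivariant quasi-coherent sheaves on $G$, with $\pi^*$ and $(\pi_*(-))^{G_x}$ as mutually inverse functors. This is the technical heart of the argument; once it is in place, everything else is formal. Both $\Phi$ and $\Psi$ are exact, since étale-locally they are respectively the fiber of $\calO_X \otimes_\bC V$ at $x$ and the functor $V \mapsto \calO_X \otimes_\bC V$, and the abelian category structures on both sides match under the natural isomorphisms above.
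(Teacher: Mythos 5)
Your argument is correct and is the standard one, but note that the paper does not supply a proof of this proposition at all: it is stated as a citation to Kashiwara \cite[Theorem 4.8.1]{Kas89}. Your construction of the quasi-inverse via the associated-bundle functor $V \mapsto (\pi_*(\calO_G \otimes_\bC V))^{G_x}$, the identification of $\pi^*\calF$ with $\calO_G \otimes_\bC \calF(x)$ by restricting the equivariance isomorphism $\beta$ along $g \mapsto (g,x)$, and the appeal to faithfully flat descent for the principal $G_x$-bundle $\pi : G \to G/G_x$ is precisely the argument one finds in Kashiwara's lecture notes and in most textbook treatments. Two small points worth tightening if you were to write this up in full: you should note that $\pi$ is an affine faithfully flat morphism (the associated sheaf is then manifestly quasi-coherent, and descent is unproblematic), and you should say explicitly that the cocycle condition guarantees the restricted isomorphism $\pi^*\calF \xrightarrow{\sim} \calO_G \otimes_\bC \calF(x)$ is equivariant for the right $G_x$-action on $G$ twisted by the stabilizer action on $\calF(x)$; you gesture at both but the descent step is where the content lives.
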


We denote the inverse of this equivalence by $(\bullet)_X$. The invertible sheaf on $X$ associated to a character $\lambda$ of $G_x$ by this equivalence is denoted by $\calL^\lambda_X$.

The morphism of Lie algebras $\fg \to \Gamma(Theta_X)$ given by the action of $G$ on $X$ induces $\fg_X$ a structure of a Lie algebroid (for the definition of Lie algebroids, see \cite[\S 1.2]{BeiBer93}). We denote by $\calU(\fg_X)$ the enveloping algebra of the Lie algebroid $\fg_X$. The kernel of the structure map $\fg_X \to \Gamma(\Theta_X)$ is denoted by $\calI_X$. We have an isomorphism $\calI_X \cong (\fg_x)_X$ as Lie algebroids. 
Let $\lambda \in (\fg_x^*)^{G_x}$ be a $G_x$-invariant functional. We note that if $G_x$ is connected then $(\fg_x^*)^{G_x}$ is isomorphic to $(\fg_x/[\fg_x,\fg_x])^{*}$, the set of all characters of the Lie algebra $\fg_x$. The character $\lambda$ induces a character $\lambda_X :\calI_X \to \calO_X$. 

\begin{defi}
We define a sheaf of rings by
$\calD^\lambda_X:= \calU(\fg_X)/\langle A - \lambda_X(A) \mid A \in \calI_X \rangle$. 
\end{defi}
This is a $G$-equivariant TDO. We call $\calD^\lambda_X$ a \emph{$G$-equivariant TDO associated to $\lambda$}. If $\lambda$ comes from a character $\lambda$ of $G_x$, then we have an identity $c(\calD^\lambda_X) = c_1(\calL^\lambda_X)$ and hence an isomorphism of TDO's $\calD^\lambda_X \cong \calD^{\calL^\lambda_X}_X$. 

This construction is compatible with the pullback along a morphism of homogeneous spaces. 
\begin{prop}[{\cite[Proposition 4.14.1]{Kas89}}]
Let $\iota : H_1 \hookrightarrow H_2$ be closed subgroups of $G$. Let $p: G/H_1 \to G/H_2$ be the quotient morphism and $\lambda \in (\fh_2^*)^{H_2}$. Then we have an isomorphism of $G$-equivariant TDO's $p^{\#}\calD^{\lambda}_{G/H_2} \cong \calD^{d\iota^*\lambda}_{G/H_1}$. 
\end{prop}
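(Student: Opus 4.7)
The plan is to construct a canonical morphism of $G$-equivariant TDO's on $G/H_1$
\[
\bar\phi: \calD^{d\iota^*\lambda}_{G/H_1} \longrightarrow p^{\#}\calD^{\lambda}_{G/H_2}
\]
and then verify it is an isomorphism by comparing associated graded rings. For the construction, the functoriality of $p^{\#}$ and of $G$-equivariance show that $p^{\#}\calD^{\lambda}_{G/H_2}$ is a $G$-equivariant TDO, so by the general discussion of \S\ref{sectTDO} its $G$-action produces a morphism of Lie algebroids $\fg_{G/H_1} \to p^{\#}\calD^{\lambda}_{G/H_2}$. Concretely this is the composition
\[
\fg_{G/H_1} = p^{*}\fg_{G/H_2} \longrightarrow p^{*}\calD^{\lambda}_{G/H_2} \longrightarrow p^{\#}\calD^{\lambda}_{G/H_2},
\]
where the first arrow is obtained by pulling back the canonical map $\fg_{G/H_2} \to \calD^{\lambda}_{G/H_2}$ coming from the definition of $\calD^{\lambda}_{G/H_2}$, and the second is the tautological inclusion $p^{*}\calA \hookrightarrow p^{\#}\calA$. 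By the universal property of the enveloping algebra of a Lie algebroid, this extends uniquely to a ring homomorphism $\phi: \calU(\fg_{G/H_1}) \to p^{\#}\calD^{\lambda}_{G/H_2}$.

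Next I would check that $\phi$ annihilates the relations defining $\calD^{d\iota^*\lambda}_{G/H_1}$. The geometric input is the chain of inclusions
\[
\calI_{G/H_1} \hookrightarrow p^{*}\calI_{G/H_2} \hookrightarrow p^{*}\fg_{G/H_2} = \fg_{G/H_1},
\]
where the first inclusion holds because, at a point $gH_1 \in G/H_1$, the fiber $\Ad(g)\fh_1 \subset \Ad(g)\fh_2$. On $p^{*}\calI_{G/H_2}$ the map $\phi$ factors through $p^{*}(\lambda_{G/H_2}) \in p^{*}\calO_{G/H_2} = \calO_{G/H_1}$ by the defining relation of $\calD^{\lambda}_{G/H_2}$. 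A direct fiberwise computation identifies the $\calO_{G/H_1}$-linear map obtained by restricting $p^{*}\lambda_{G/H_2}$ to $\calI_{G/H_1}$ with $(d\iota^*\lambda)_{G/H_1}$, since $d\iota^*\lambda$ is the restriction $\lambda|_{\fh_1}$ and the $H_2$-invariance of $\lambda$ implies the $H_1$-invariance of its restriction. Therefore $\phi$ descends to the desired morphism $\bar\phi$.

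Finally, $\bar\phi$ is an isomorphism because it is filtration-preserving (it sends $F^1$ to $F^1$ by construction, and $F^{\bullet}$ is generated by $F^1$ on both sides), and $\gr\bar\phi$ is the identity endomorphism of $\Sym^{\bullet}_{\calO_{G/H_1}}\Theta_{G/H_1}$: the canonical symbol $\sigma$ is defined from commutators, so any filtered $\calO_{G/H_1}$-algebra map between TDO's automatically intertwines $\sigma$; in degree one, $\sigma \circ \bar\phi$ on $\fg_{G/H_1}$ recovers the anchor $\fg_{G/H_1} \to \Theta_{G/H_1}$, which is the same on both sides. An induction on the filtration degree then yields the isomorphism. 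The main obstacle is the middle step: carefully unwinding how the defining relation of $\calD^{\lambda}_{G/H_2}$ propagates through the pullback construction $p^{\#}$, and distinguishing the subsheaves $\calI_{G/H_1}$ and $p^{*}\calI_{G/H_2}$ of $\fg_{G/H_1}$ so that the character extracted from $p^{\#}\calD^{\lambda}_{G/H_2}$ is globally identified with $(d\iota^*\lambda)_{G/H_1}$; the remaining steps are essentially bookkeeping once the TDO axioms are in place.
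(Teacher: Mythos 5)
The paper does not prove this statement; it is cited verbatim from Kashiwara \cite[Proposition 4.14.1]{Kas89}, so there is no in-paper proof to compare against. I will therefore assess your attempt on its own merits.

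Your overall strategy is the right one (and is essentially the standard one): produce a morphism of Lie algebroids $\fg_{G/H_1} \to p^{\#}\calD^{\lambda}_{G/H_2}$, extend by the universal property of $\calU(\fg_{G/H_1})$, check that the defining ideal is killed by matching $\calI_{G/H_1} \subset p^{*}\calI_{G/H_2}$ with the restricted character $d\iota^{*}\lambda$, and finish by passing to associated graded. The middle and final steps are fine as you sketch them.

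The gap is in the construction of the Lie algebroid morphism. There is no ``tautological inclusion'' $p^{*}\calA \hookrightarrow p^{\#}\calA$. Recall $p^{\#}\calA = \calE\!\mathit{nd}^{\mathit{fin}}_{p^{-1}\calA}(p^{*}\calA)$; the natural module structure is that $p^{\#}\calA$ \emph{acts} on $p^{*}\calA$, which produces a map $p^{\#}\calA \to p^{*}\calA$ by evaluation at $1$, not the other way around, and neither map is an inclusion in general. Even on $F^1$ the map you want cannot factor through $p^{*}\calA$: if $\iota : \fg \to F^1\calD^{\lambda}_{G/H_2}$ is the structural map, the correct morphism sends $\xi \in \fg$ to the first-order operator on $p^{*}\calA = \calO_{G/H_1} \otimes_{p^{-1}\calO_{G/H_2}} p^{-1}\calA$ given by
\[
\xi \cdot (f \otimes a) \;=\; \xi^{*}_{G/H_1}(f) \otimes a \;+\; f \otimes \iota(\xi) a ,
\]
and this Lie-derivative term $\xi^{*}_{G/H_1}(f)\otimes a$ is precisely what carries the anchor of $\fg_{G/H_1}$ to $\Theta_{G/H_1}$; the composite you wrote would only produce the second summand, and in any case the second arrow in your chain is ill-defined because $(f\otimes a)\cdot(g\otimes b)$ has no meaning in $p^{*}\calA$ when $g$ cannot be moved across the $\otimes_{p^{-1}\calO_{G/H_2}}$. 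One must also verify that the formula above is well-defined modulo the tensor relations, which requires exactly the TDO identity $[\iota(\xi), h] = \xi^{*}_{G/H_2}(h)$ for $h \in \calO_{G/H_2}$, and that the operator so obtained commutes with right $p^{-1}\calA$-multiplication so that it lands in $p^{\#}\calA$. Once this replacement is made, your remaining steps — the containment $\calI_{G/H_1} \subset p^{*}\calI_{G/H_2}$ at $gH_1$, the identification of the restricted character with $(d\iota^{*}\lambda)_{G/H_1}$, and the $\gr$ comparison with $\Sym^{\bullet}\Theta_{G/H_1}$ — all go through as you indicate.
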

In the following we suppress $d\iota^*$ from notation and write like $\calD^{\lambda}_{G/H_1} \cong p^{\#}\calD^{\lambda}_{G/H_2}$.

Fix $\lambda \in (\fg_x^*)^{G_x}$. A twisted $(\fg, G_x)$-module $M$ with the twist $\lambda$ is a $\fg$-module with a $G_x$-module structure on $\bC_\lambda \otimes M$ satisfying (4.10.1) and (4.10.2) of \cite{Kas89}. 
We have the following equivalence of categories. 

\begin{prop}[{\cite[Theorem 4.10.2 (1)]{Kas89}}]\label{wequivequiv}
The functor in Proposition \ref{equivO} induces an equivalence between the category of weakly equivariant $\calD^\lambda_X$-modules and the category of twisted $(\fg,G_x)$-modules with the twist $\lambda$. 
\end{prop}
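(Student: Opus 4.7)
The plan is to upgrade the equivalence $\QCoh_G(X) \simeq \Rep(G_x)$ from Proposition \ref{equivO} by showing that a $\calD^\lambda_X$-module structure on the sheaf side corresponds exactly to the extra data of a twisted $(\fg, G_x)$-module structure on the $G_x$-representation side. Concretely, I would produce functors in both directions whose underlying $\calO_X$-module / $G_x$-module pieces are inverse to each other via $(\bullet)_X$ and $\calF \mapsto \calF(x)$, and check that the additional structures match.

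For the first direction: starting from a weakly $G$-equivariant $\calD^\lambda_X$-module $\calM$, I set $M := \calM(x)$, with its $G_x$-action coming from $G$-equivariance. A $\fg$-action on $M$ arises by composing the structure map $\fg \to \fg_X$ of the Lie algebroid, the quotient $\calU(\fg_X) \twoheadrightarrow \calD^\lambda_X$, and the $\calD^\lambda_X$-action on $\calM$, then passing to the fiber at $x$. The defining relation $A = \lambda_X(A)$ for $A \in \calI_X = (\fg_x)_X$ forces the $\fg_x$-action on $M$ to differ from the derivative of the $G_x$-action by exactly $\lambda|_{\fg_x}$; this is precisely the compatibility required so that $\bC_\lambda \otimes M$ becomes a coherent $(\fg, G_x)$-module in the sense of (4.10.1)--(4.10.2) of \cite{Kas89}.

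For the inverse direction: given a twisted $(\fg, G_x)$-module $M$ with twist $\lambda$, first extract the $G_x$-representation on $\bC_\lambda \otimes M$; applying $(\bullet)_X$ and twisting back by $\calL^{-\lambda}_X$ produces a $G$-equivariant quasi-coherent $\calO_X$-module $\calM$ with $\calM(x)=M$. The $\fg$-action on $M$ then has to be propagated to an action of the Lie algebroid $\fg_X$ on $\calM$ compatible with $G$-equivariance; the twist condition on $M$ is exactly what ensures that the induced action of $\calU(\fg_X)$ kills the ideal generated by $A - \lambda_X(A)$ for $A \in \calI_X$, so that it descends to a $\calD^\lambda_X$-action. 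Weak equivariance of $\calM$ is built into the construction, since the $\fg_X$-action is defined to intertwine the $G$-equivariant structures of $\mu^\#\calD^\lambda_X$ and $p^\#\calD^\lambda_X$ on $\mu^*\calM$ and $p^*\calM$.

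The main obstacle is this last globalization step: extending the bare $\fg$-action on the fiber to a well-defined $\fg_X$-action on the full $G$-equivariant sheaf. I would handle it by local trivialization: étale-locally $X$ looks like $G \times^{G_x} U$ for a $G_x$-stable neighborhood of $x$, and over such a chart the action of $\fg$ through right-invariant vector fields on $G$ descends to $X$ precisely when the $G_x$-equivariance holds up to the twist $\lambda$. Once this local compatibility is written out, the two functors are seen to be mutually inverse because both reduce, at the level of fibers at $x$, to the identity on the pair (underlying $G_x$-module, derived $\fg_x$-action), and the $G$-equivariance rigidifies the extension.
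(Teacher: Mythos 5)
This statement is cited directly from Kashiwara \cite[Theorem 4.10.2 (1)]{Kas89}; the paper offers no internal proof, so there is no argument in the text to compare against, and your attempt must be judged on its own merits.

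The overall shape of your plan is reasonable: bootstrap the equivalence of Proposition \ref{equivO} and match the extra $\calD^\lambda_X$-module data with the extra $(\fg, G_x)$-module data fibrewise. But there is a genuine gap in your construction of the $\fg$-action on $M := \calM(x)$. You propose to take the composite $\fg \to \fg_X \twoheadrightarrow \calD^\lambda_X$, let it act on $\calM$, and then ``pass to the fiber at $x$.'' This does not work as stated: the image of $\fg$ in $\calD^\lambda_X$ acts by differential operators whose symbols are the infinitesimal vector fields, and these vector fields do not vanish at $x$ (indeed they span $T_xX$), so this action does \emph{not} preserve $\fm_x\calM_x$ and therefore does not descend to $\calM(x) = \calM_x/\fm_x\calM_x$. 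The correct recipe requires the weak $G$-equivariance in an essential way: the equivariance isomorphism $\beta$, restricted along the orbit map $a: G \to X$, $g \mapsto gx$, trivialises $a^*\calM \cong \calO_G \otimes_\bC \calM(x)$, and one reads off the $\fg$-action on $\calM(x)$ by applying the pulled-back $\calD$-module action to the canonical sections $1 \otimes m$ of this trivialisation and projecting the result back to the $\calM(x)$-factor. Phrased differently, the $\fg$-action on the fiber is the difference between the $\calD$-module action and the differentiated equivariance action, which is $\calO_X$-linear precisely because both satisfy the same Leibniz rule; without invoking the equivariance, nothing in your first paragraph produces a well-defined map $\fg \to \mathrm{End}_\bC(M)$. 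This is exactly the content of Kashiwara's theorem and cannot be elided.

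A secondary issue is your treatment of the inverse direction. The claim that ``\'etale-locally $X$ looks like $G \times^{G_x} U$'' is off: $X$ is already the homogeneous space $G/G_x$, and the relevant trivialisation is the principal $G_x$-bundle $G \to X$ itself, not a local model. Once one works along $a : G \to X$, the extension of the $\fg$-action to a $\fg_X$-action on $(\bC_\lambda \otimes M)_X \otimes \calL^{-\lambda}_X$ is governed by descent along $a$, and verifying that the defining relations of $\calD^\lambda_X$ are satisfied requires checking the conditions (4.10.1)--(4.10.2) pointwise in $G$, not merely at the identity coset. Your write-up asserts that ``the twist condition on $M$ is exactly what ensures'' the required ideal is killed, but gives no argument, and this is where the real content sits.
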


\subsection{Partial flag varieties and TDO's on partial flag varieties}\label{sectflag}
The notation in this section is used throughout this paper. 

\subsubsection{Partial flag varieties}

Let $G$ be a connected reductive algebraic group over $\bC$, $B$ be its Borel subgroup, $U$ the unipotent radical of $B$ and $H$ be a maximal torus in $B$. 
We denote by $W$ the Weyl group $N_G(H)/H$, by $\Delta$ the set of roots of $\fg :={\rm Lie}\ G$, by $\Delta^+$ the set of positive roots determined by $B$ and by $\Pi$ the set of simple roots. We denote by $\ell$ the length function of $W$. 

To each subset $I \subset \Pi$, one associates a parabolic subgroup $P_I$ of $G$ in the way that $P_\emptyset = B$ holds, its Levi subgroup $L_{I}$ containing $H$, the unipotent radical $U_I$ of $P_I$, $H_I$ the subgroup of $H$ generated by the image of $\alpha :\bG_m \to H$ for all $\alpha \in I$, $\Delta_I$ the set of roots in $\fl_I$ and the parabolic subgroup $W_I$ of $W$. We denote by $w^{I}_0$ the longest element of $W_I$. We denote by $\bar P_I$ the opposite parabolic of $P_I$ and by $\bar U_I$ its unipotent radical. 
Let $I \subset J \subset \Pi$. We denote by $P^J_I$ the parabolic subgroup of $L_J$ defined by $L_J \cap P_I$. 
For $\alpha \in \Pi$ we denote by $\varpi_\alpha$ the fundamental weight corresponding to $\alpha$. 

We always identify $(\fh/\fh_I)^*$ with a subspace of $\fh^*$ via the natural inclusion and identify $X^*(B) \cong X^*(H)$ with a subgroup of $\fh^*$ and $X^*(P_I) \cong X^*(H/H_I)$ with a subgroup of $(\fh/\fh_I)^*$ via the differential. 

The partial flag variety $G/P_I$ decomposes into the finite union of $B$-orbits (Bruhat decomposition): we have $G/P_I= \coprod_{w \in W/W_I} BwP_I $. We denote the Bruhat cell $BwP_I$ by $C_w$. Each cell $C_w$ is an affine space with dimension the length of the minimal coset representative of $w$. We denote by $i_w$ the inclusion $C_w \hookrightarrow G/P_I$.  
Since $G/P_I$ is projective and has the Bruhat decomposition, by the Hodge theory we have following isomorphisms \cite[Theorem 5.5]{BerGelGel73}
\[H^2(G/P_I, \sigma^{\geq 1}\Omega^{\bullet}_{G/P_I}) \cong H^2(G/P_I; \bC) \cong (\fp_I / [\fp_I, \fp_I])^* \cong (\fh/\fh_I)^* .\] 
In the following we identify $(\fh/\fh_I)^*$ with $(\fp_I / [\fp_I, \fp_I])^*$. Note that the equality $c(\calD^{\lambda}_{G/P_I})=c_1(\calL^{\lambda}_{G/P_I})$ holds for any $\lambda \in X^*(P_I)$. 

\begin{rem}[{\cite[Theorem V]{Bot57}}]
The $G$-module $\Gamma(G/P_I, \calL^\lambda)$ is isomorphic to the finite dimensional irreducible $G$-module of lowest weight $\lambda$ or zero. 
\end{rem}

Let $I,J$ be subsets of $ \Pi$. 
The $G$-orbits of $G/P_J \times G/P_I$ are parametrized by the set $W_I \backslash W / W_J$. 
The correspondence is given by assigning to $w \in W_I \backslash W / W_J$ the orbit $\bO_w := G(w, e) \subset G/P_J \times G/P_I$.
Let $p^w_1: \bO_w \to G/P_J$ and $p^w_2: \bO_w \to G/P_I$ be restrictions of the first and the second projections from $G/P_J \times G/P_I$ and $j_w: \bO_w \to G/P_J \times G/P_I$ be the inclusion. 
The $G$-orbit $\bO_w$ is isomorphic to $G/(P_I \cap wP_Jw^{-1})$ as a $G$-variety. 
Under this isomorphism, $p^w_1: G/(P_I \cap wP_Jw^{-1}) \to G/P_I$ is the quotient morphism and $p^w_2 :G/(P_I \cap wP_Jw^{-1}) \to G/P_J$ is given by $g \mapsto gw$.  

In this paper we always consider $w \in W$ satisfying the following condition $\conda$. 
\[\textit{There exist}\  I, J \subset \Pi \ \textit{such that} \  wJ=I \ \textit{holds}.  \tag{$\ast$}\]
 For such $w$ we have $wL_Jw^{-1} = L_I$ and the morphism $p^w_1$ and $p^w_2$ are affine space fibrations with the fibers over identity cosets isomorphic to $P_J/(w^{-1}P_Iw \cap P_J) \cong U_J/(w^{-1}U_Iw \cap U_J)$ and $P_I/(P_I \cap wP_Jw^{-1}) \cong U_I/(U_I \cap wU_Jw^{-1})$ which are of dimension $\ell(w)$. From this fact we see that there is an isomorphism $\det(\Theta_{p^w_1}) \cong p^{w*}_2\calL^{w\rho-\rho}_{G/P_I}$. 

Let $w \in W$ satisfy Condition $\conda$. 
For such $w$ we have a ``reduced expression'' in the following sense.  
To each $\alpha \in \Pi \setminus I$ one associates $v[\alpha, I]=w^{I \cup \{ \alpha \}}_0w^I_0 \in W$. 
\begin{prop}[{\cite[Proposition 2.3]{BriHow99}}]\label{BH}
Let $I, J \subset \Pi$ and $w \in W$ satisfy $I=wJ$. Then there exist $\alpha_1, \ldots \alpha_r \in \Pi$ satisfying following conditions. 
\begin{enumerate}
\item $I = v[\alpha_1, I_1]I_1,I_1 = v[\alpha_2, I_2]I_2, \ldots , I_{r-1} = v[\alpha_r, I_r]I_r , I_r=J$
\item $\alpha_i \notin I_i$
\item $w=v[\alpha_1, I_1]\cdots v[\alpha_r, I_r]$
\item $\displaystyle \ell(w)=\sum_{1\leq i\leq r} \ell(v[\alpha_i, I_i])$
\end{enumerate}
\end{prop}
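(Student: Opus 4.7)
The plan is to induct on $\ell(w)$, peeling off a factor $v[\alpha_r, I_r]$ from the right at each step. The base case $\ell(w) = 0$ forces $w = e$ and $I = J$, so one takes $r = 0$.

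For the inductive step with $\ell(w) \geq 1$, I would first note that $wJ = I \subset \Delta^+$ implies $w$ sends $\Delta^+_J$ into $\Delta^+$, so no simple root in $J$ is a right descent of $w$. Picking any right descent $\alpha_r \in \Pi \setminus J$, I would set $I_r := J$, $I_{r-1} := v[\alpha_r, I_r] I_r$, and $w_{r-1} := w \cdot v[\alpha_r, I_r]^{-1}$. The inclusion $v[\alpha, K] K \subset K \cup \{\alpha\}$, which holds because $v[\alpha, K]$ lies in $W_{K \cup \{\alpha\}}$ and has no right descent in $K$, then ensures $I_{r-1} \subset \Pi$, while $w_{r-1} I_{r-1} = wJ = I$ is automatic.

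The hard part will be the length additivity $\ell(w) = \ell(w_{r-1}) + \ell(v[\alpha_r, I_r])$, equivalently $\mathrm{Inv}(v[\alpha_r, I_r]) \subset \mathrm{Inv}(w)$. A cardinality count using $\ell(v[\alpha_r, I_r]) = \ell(w^{I_r \cup \{\alpha_r\}}_0) - \ell(w^{I_r}_0) = |\Delta^+_{I_r \cup \{\alpha_r\}} \setminus \Delta^+_{I_r}|$ identifies $\mathrm{Inv}(v[\alpha_r, I_r])$ with $\Delta^+_{I_r \cup \{\alpha_r\}} \setminus \Delta^+_{I_r}$, so the task reduces to showing $w\gamma \in \Delta^-$ for every $\gamma \in \Delta^+_{J \cup \{\alpha_r\}} \setminus \Delta^+_J$. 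Expanding $\gamma = c\alpha_r + \sum_{\beta \in J} c_\beta \beta$ with $c \geq 1$ and $c_\beta \geq 0$, one computes $w\gamma = c\, w\alpha_r + \sum_\beta c_\beta\, w\beta$, in which $w\alpha_r \in \Delta^-$ while each $w\beta$ is a simple root lying in $I$. The crucial observation is that $w\alpha_r$ cannot lie in $-\Delta^+_I$: otherwise $\alpha_r = w^{-1}(w\alpha_r) \in -w^{-1}\Delta^+_I = -\Delta^+_J \subset \Delta^-$, contradicting $\alpha_r \in \Pi$. Thus some simple root $\alpha_k \notin I$ appears with positive coefficient in $-w\alpha_r$, making the coefficient of $\alpha_k$ in $w\gamma$ strictly negative; since $w\gamma$ is a root, all of its simple-root coefficients share a single sign, so $w\gamma \in \Delta^-$.

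With length additivity secured, the inductive hypothesis applied to $w_{r-1}$ (which satisfies $w_{r-1} I_{r-1} = I$ and $\ell(w_{r-1}) < \ell(w)$) yields a decomposition $w_{r-1} = v[\alpha_1, I_1] \cdots v[\alpha_{r-1}, I_{r-1}]$ satisfying (1)--(4), and appending $v[\alpha_r, I_r]$ produces the required decomposition of $w$. The critical use of the stronger hypothesis $wJ \subset \Pi$, rather than merely $wJ \subset \Delta^+$, is precisely what makes the witness simple root $\alpha_k$ available in the key step.
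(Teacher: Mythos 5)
The paper does not give a proof of this proposition; it is cited directly from Brink and Howlett \cite{BriHow99}, so there is no in-paper argument to compare against. Your self-contained root-theoretic induction is correct, and it is a genuinely direct argument at the level of root systems rather than the more general Coxeter-theoretic machinery of the source.

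One justification is incomplete, though the claim it supports is true. You assert $v[\alpha,K]K \subset K \cup \{\alpha\}$ ``because $v[\alpha,K]$ lies in $W_{K\cup\{\alpha\}}$ and has no right descent in $K$.'' Those two properties only guarantee $v[\alpha,K]K \subset \Delta^+_{K\cup\{\alpha\}}$, not that the image consists of \emph{simple} roots: in type $A_2$ with $K=\{\alpha_1\}$, the element $s_2 \in W_{\{\alpha_1,\alpha_2\}}$ has no right descent in $K$, yet $s_2\alpha_1 = \alpha_1+\alpha_2 \notin \Pi$. What actually forces simplicity is the specific form $v[\alpha,K]=w_0^{K\cup\{\alpha\}}w_0^K$: since $-w_0^K$ permutes $K$ and $-w_0^{K\cup\{\alpha\}}$ permutes $K\cup\{\alpha\}$, one has $v[\alpha,K]\beta = -w_0^{K\cup\{\alpha\}}\bigl(-w_0^K\beta\bigr) \in K\cup\{\alpha\}$ for each $\beta\in K$. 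With that substitution the argument is complete: the identification $\mathrm{Inv}\bigl(v[\alpha_r,I_r]\bigr)=\Delta^+_{J\cup\{\alpha_r\}}\setminus\Delta^+_J$ is correct, the observation that $w\alpha_r\notin -\Delta^+_I$ (else $\alpha_r\in -\Delta^+_J$) supplies the simple root $\alpha_k\notin I$ whose coefficient in $w\gamma$ is strictly negative, and your closing remark correctly pinpoints where the hypothesis $wJ\subset\Pi$, rather than merely $wJ\subset\Delta^+$, is used.
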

The element $v[\alpha, I] \in W$ may be thought of as a simple reflection in the parabolic case.

\subsubsection{Representations of semisimple Lie algebras}\label{repss}

Let $I \subset \Pi$. 
We denote by $\rho$ the half sum of positive roots of $\fg$, by $\rho_I$ the $\rho$ for $\fl_I$, by $\rho_{\fn_{I}}$ the difference $\rho -\rho_I$. 

\begin{defi}\label{regdom}
A weight $\lambda \in \fh^*$ is called \emph{regular} if $\langle \lambda-\rho, \check\alpha \rangle \neq 0$ holds for any root $\alpha \in \Delta$. 
A weight $\lambda \in \fh^*$ is called \emph{antidominant} if $\langle \lambda -\rho, \check\alpha \rangle \notin \bZ_{\geq 1}$ for all $\alpha \in \Delta^+$. 
\end{defi}
Note that the definition of regularity is different from usual one because we use $*$-action defined in Definition \ref{staraction}. 

We define the (scalar) generalized Verma module of highest weight $\lambda \in (\fh/\fh_I)^*$ by $M^{\fg}_{\fp_I}(\lambda) := \calU(\fg) \otimes_{\calU(\fp_I)} \bC_{\lambda}$. 
We denote by $I_{\fp_I}(\lambda)$ the annihilator of the generalized Verma module $M^{\fg}_{\fp_I}(\lambda)$. 
We denote by ${\rm U}^{\lambda}_I$ the quotient $\calU(\fg)/ I_{\fp_I}(\lambda - 2 \rho_{\fn_I})$. If $I$ is empty, we denote by ${\rm U}^{\lambda} $ the quotient $ \calU(\fg)/ I_{\fb}(\lambda - 2 \rho)$. 
We use the following result of Jantzen. 

\begin{prop}[{\cite[Corollar 15.27]{Jan83}}]\label{sameideal}
Assume that $J =w^{-1}I \subset \Pi$ holds. For any $\lambda \in (\fh/\fh_I)^*$, the ideals $I_{\fp_I}(\lambda)$ and $I_{\fp_J}(w^{-1}(\lambda + \rho) -\rho)$ coincide. 
\end{prop}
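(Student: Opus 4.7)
My plan is to combine a twist by an inner automorphism with a change-of-polarization identification for scalar generalized Verma modules.

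First, fix a lift $\tilde w \in N_G(H)$ of $w$. Since $\Ad(\tilde w)\colon \calU(\fg)\to\calU(\fg)$ is an inner automorphism of $\calU(\fg)$, it preserves every two-sided ideal, so the $\Ad(\tilde w)$-twist of $M^{\fg}_{\fp_I}(\lambda)$ has the same annihilator as $M^{\fg}_{\fp_I}(\lambda)$. Tracking the original highest-weight vector and using $\Ad(\tilde w^{-1})(\fp_I)=w^{-1}\fp_I w$ together with $\Ad(\tilde w^{-1})(\fl_I)=\fl_J$ (which follows from $wJ=I$), I identify the twisted module with the scalar generalized Verma module
\[
\calU(\fg)\otimes_{\calU(w^{-1}\fp_I w)}\bC_{w^{-1}\lambda}
\]
induced from the (generally non-standard) parabolic $w^{-1}\fp_I w$, which has Levi $\fl_J$ but whose nilradical $w^{-1}\fn_I w$ in general differs from $\fn_J$. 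This gives the equality $I_{\fp_I}(\lambda)=\Ann_{\calU(\fg)}\bigl(\calU(\fg)\otimes_{\calU(w^{-1}\fp_I w)}\bC_{w^{-1}\lambda}\bigr)$.

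The task is now to compare this with $I_{\fp_J}\bigl(w^{-1}(\lambda+\rho)-\rho\bigr)$, the annihilator of $\calU(\fg)\otimes_{\calU(\fp_J)}\bC_{w^{-1}(\lambda+\rho)-\rho}$, which is induced from $\fp_J$, a parabolic sharing the Levi $\fl_J$ but with the standard nilradical $\fn_J$. A short root-system computation, using that the hypothesis $wJ=I$ makes $w$ a positivity-preserving bijection $\Delta_J\to\Delta_I$ and hence $w^{-1}\rho_I=\rho_J$, shows that the difference between the two $\fh$-characters is
\[
\bigl(w^{-1}(\lambda+\rho)-\rho\bigr)-w^{-1}\lambda = w^{-1}\rho-\rho = w^{-1}\rho_{\fn_I}-\rho_{\fn_J},
\]
i.e.\ the difference of the half-sums of the roots in the two nilradicals.

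The problem has thus been reduced to the following change-of-polarization invariance: for a fixed Levi subalgebra $\fl_J$ of $\fg$ and two parabolic subalgebras $\fq_1,\fq_2$ of $\fg$ containing $\fh$ with Levi $\fl_J$, the scalar generalized Verma modules induced from characters that differ by $\rho_{\fn_{\fq_1}}-\rho_{\fn_{\fq_2}}$ have the same annihilator in $\calU(\fg)$. I expect this to be the main obstacle. To handle it, I would first verify the statement in the sufficiently antidominant range, where both scalar generalized Verma modules become irreducible and the coincidence of annihilators can be read off from the $\calU(\fl_J)$-module structure together with Duflo's theorem on primitive ideals, and then propagate equality of annihilators to arbitrary regular parameters by a translation argument in category $\calO$. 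This change-of-polarization invariance is the technical core of the argument and is precisely what the cited Corollar 15.27 of \cite{Jan83} establishes.
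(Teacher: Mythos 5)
The paper does not give its own proof of this proposition; it is cited verbatim from Jantzen's book, so there is no in-paper argument to compare against, and I will assess your proposal on its own terms. Your reduction is correct and is the natural preprocessing: conjugation by $\Ad(\tilde w)$ preserves every two-sided ideal of $\calU(\fg)$ (although the right reason is that $G$ is connected, so a two-sided ideal, being $\ad(\fg)$-stable, is automatically $\Ad(G)$-stable; $\Ad(\tilde w)$ is not ``inner'' in the ring-theoretic sense of conjugation by a unit of $\calU(\fg)$), and your $\rho$-shift computation is right, since $wJ=I$ forces $w^{-1}\rho_I=\rho_J$ and hence $w^{-1}\rho-\rho=w^{-1}\rho_{\fn_I}-\rho_{\fn_J}$. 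This correctly recasts the proposition as a change-of-polarization invariance of scalar generalized Verma module annihilators.

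The genuine gap is in the proof of that core step, which you sketch but do not close. The argument ``sufficiently antidominant $\Rightarrow$ both GVMs irreducible, then Duflo, then translate'' does not go through as stated, for several reasons. First, the two GVMs are highest-weight modules for two different Borels, namely $\fb$ for $\fp_J$ and $w^{-1}\fb w$ for $\fq:=w^{-1}\fp_Iw$; the antidominance cones for these Borels are different, and there is no common chamber in which both parameters are simultaneously deeply antidominant, so one must argue via genericity of irreducibility rather than dominance. Second, even granting simultaneous irreducibility, equality of central characters together with Duflo's theorem does not yield equality of annihilators: Duflo identifies each annihilator as $\Ann L$ for a simple highest-weight module, but those are simple modules over two different Borels, and showing the two primitive ideals agree is exactly the polarization-independence of induced ideals you are trying to establish, not a consequence of it. Third, propagating ``to arbitrary regular parameters by a translation argument'' would not even establish the stated proposition, which holds for all $\lambda\in(\fh/\fh_I)^*$, singular and non-integral included. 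Your closing sentence, that this invariance is ``precisely what the cited Corollar 15.27 of Jantzen establishes,'' concedes the point: as written, the proposal is a correct reformulation followed by a citation to the very result under discussion rather than an independent proof.
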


Let $V_1, V_2$ be $\fg$-modules. We define a $\fg$-bimodule $L(V_1, V_2)$ to be the $\fg$-subbimodule of $\Hom_\bC(V_1, V_2)$ consisting of all $\fg$-finite elements under the diagonal $\fg$-action. 

The homomorphism $\calU(\fg) \to {\rm End}_\bC(M^{\fg}_{\fp_I}(\lambda- 2\rho_{\fn_I}))$ factors through an homomorphism $\calU(\fg) \to L(M^{\fg}_{\fp_I}(\lambda- 2\rho_{\fn_I}), M^{\fg}_{\fp_I}(\lambda- 2\rho_{\fn_I}))$. 
This homomorphism factors through an injection $a^{\lambda}: {\rm U}^\lambda_I \to L(M^{\fg}_{\fp_I}(\lambda- 2\rho_{\fn_I}), M^{\fg}_{\fp_I}(\lambda- 2\rho_{\fn_I}))$. 
In general $a^\lambda$ is not surjective. An example of nonsurjectivity is given in \cite[\S 8.2]{Soe89}. 
For an ``antidominant regular'' weight $\lambda$, it is known that $a^\lambda$ is surjective. 

\begin{prop}[{\cite[Corollar 15.23]{Jan83}}]\label{asurj}
If $\lambda \in (\fh/\fh_I)^*$ satisfies $\langle \lambda + \rho , \check\beta \rangle \notin \bZ_{\geq 1}$ for all $\beta \in \Delta^+ \setminus \Delta_I$, then the homomorphism $a^\lambda$ is surjective. 
\end{prop}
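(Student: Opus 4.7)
The plan is to follow the adjoint-decomposition strategy of Bernstein and Gelfand, adapted to the parabolic setting in the style of Jantzen. Set $M := M^{\fg}_{\fp_I}(\lambda - 2\rho_{\fn_I})$; the map $a^\lambda$ is induced by the left multiplication of $\calU(\fg)$ on $M$, and it is injective by the very definition ${\rm U}^\lambda_I = \calU(\fg)/\Ann M$. Both ${\rm U}^\lambda_I$ and $L(M,M)$ are $\calU(\fg)$-bimodules, and $a^\lambda$ is equivariant for the adjoint action of $\fg$. Since the target consists by definition of $\fg$-finite vectors, I plan to decompose each side into its adjoint-$\fg$ isotypic components and prove surjectivity by matching multiplicities component by component.

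First, I would write
\[L(M,M) \cong \bigoplus_{F} F^{*} \otimes \Hom_{\fg}(M \otimes F, M),\]
where $F$ runs over finite-dimensional simple $\fg$-modules. Because $M$ is parabolically induced from $\fp_I$, the tensor product $M \otimes F$ admits a filtration whose subquotients are generalized Verma modules of the form $M^{\fg}_{\fp_I}(\lambda - 2\rho_{\fn_I} + \nu)$ indexed by the $\fh$-weights $\nu$ of $F$ (regarded as a $\fp_I$-module). The antidominance hypothesis $\langle \lambda+\rho, \check\beta \rangle \notin \bZ_{\geq 1}$ for $\beta \in \Delta^+ \setminus \Delta_I$, combined with the generalized BGG embedding theorem, forces $\Hom_{\fg}(M^{\fg}_{\fp_I}(\lambda - 2\rho_{\fn_I} + \nu), M)$ to vanish unless $\nu$ is linked to $0$ by an element of $W_I$, and to be at most one-dimensional otherwise. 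This yields a precise upper bound on each $F$-isotypic multiplicity of $L(M,M)$.

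Second, I would compute the adjoint decomposition of ${\rm U}^\lambda_I$ itself and show that it realizes the upper bound obtained in Step~1. The essential input is the behaviour of translation functors on the relevant block: under the antidominance hypothesis, translating $M$ to a point of the Weyl orbit where the annihilator is well understood (for instance a very antidominant weight, where $M$ is simple and surjectivity of $a^\lambda$ is classical) preserves each adjoint-isotypic component. This produces, for each $F$, an element of ${\rm U}^\lambda_I$ lifting a generator of the corresponding multiplicity space in $L(M,M)$, hence surjectivity.

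The main obstacle is the precise matching of multiplicities in Step~2: one must show that the antidominance condition is exactly what is needed to prevent $\Ann M$ from absorbing too much of $\calU(\fg)$ beyond what survives in $L(M,M)$. The tool that makes this work is essentially Proposition~\ref{sameideal}, which identifies annihilators of generalized Verma modules along the Weyl orbit, together with the standard structure theory of translation functors and blocks of category $\calO^{\fp_I}$ developed in \cite[Kap.~15]{Jan83}.
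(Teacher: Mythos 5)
The paper offers no proof of this statement; it is cited verbatim as Jantzen's Corollar 15.23, so there is nothing internal to compare your argument against. On its own terms, your outline starts in the right place — the adjoint decomposition $L(M,M)\cong\bigoplus_F F^*\otimes\Hom_\fg(M\otimes F,M)$ and the generalized Verma filtration of $M\otimes F$ are exactly the standard tools — but the two steps where the hypothesis has to do actual work are left as placeholders, and as stated they do not close.

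In Step~1, the vanishing criterion for $\Hom_\fg(M^\fg_{\fp_I}(\lambda-2\rho_{\fn_I}+\nu),M)$ is not "linkage by $W_I$": nonvanishing of Homs between generalized Verma modules is governed by $W$-linkage of infinitesimal characters combined with the order relation, and the role of the antidominance hypothesis is to pin down which elements of $W$ can occur, not to restrict to $W_I$. More seriously, in Step~2 the translation-functor reduction is asserted rather than argued. Translation functors change the infinitesimal character, do not in general commute with passage to adjoint isotypic components of $\calU(\fg)/\Ann M$ or of $L(M,M)$, and cannot take a weight satisfying the stated (weak, possibly singular and nonintegral) condition to a "very antidominant" one while preserving both annihilators and $L$-spaces — that is precisely the hard content, not an input. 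Even your base case is not available as stated: the hypothesis $\langle\lambda+\rho,\check\beta\rangle\notin\bZ_{\geq1}$ for $\beta\in\Delta^+\setminus\Delta_I$ puts no constraint on the roots of $\fl_I$, so $M^\fg_{\fp_I}(\lambda-2\rho_{\fn_I})$ need not be simple and the density-theorem argument does not apply. The proof in the source instead hinges on two concrete inputs you never invoke: the Duflo--Joseph surjectivity of $\calU(\fg)\to L(M(\mu),M(\mu))$ for antidominant $\mu$, and the identification $\Ann M^\fg_{\fp_I}(\mu)=\Ann M(\mu)$ under the stated hypothesis (the paper's Proposition~\ref{sameideal} is the Weyl-conjugation half of this), which together let one transfer surjectivity from the Borel case rather than re-derive the Kostant-type multiplicity count from scratch. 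As it stands, your proposal records the correct framework but omits the argument.
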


\subsubsection{Sheaves of twisted differential operators on partial flag varieties}\label{TDOflag}

By the isomorphism $H^2(G/P_I, \sigma^{\geq 1} \Omega^{\bullet}_{G/P_I}) \cong (\fp_I / [\fp_I, \fp_I])^*$, we see that every TDO on partial flag varieties is a $G$-equivariant TDO.

We have a homomorphism of Lie algebras $\fg \to \Gamma(\calD^\lambda_{G/P_I})$ and an induced homomorphism of algebras $\psi^{\lambda}: \calU(\fg) \to \Gamma(\calD^\lambda_{G/P_I})$.

We first recall the fundamental result of Beilinson and Bernstein. 
Let $\lambda \in \fh^*$ be antidominant. 

\begin{prop}[{\cite[Lemme]{BeiBer81}}]
The homomorphism $\psi^\lambda$ induces an isomorphism ${\rm U}^\lambda \to \Gamma(\calD^\lambda_{G/B})$.
\end{prop}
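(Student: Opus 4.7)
The plan is to first show $\psi^\lambda$ factors through ${\rm U}^\lambda$, and then to prove the induced map $\bar\psi^\lambda: {\rm U}^\lambda \to \Gamma(\calD^\lambda_{G/B})$ is an isomorphism by comparing associated gradeds with respect to the order filtration.

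For the factoring step, I would use Proposition \ref{wequivequiv} to identify the fiber $\calD^\lambda_{G/B}(eB)$, viewed as a $\calU(\fg)$-module through $\psi^\lambda$, with the Verma module $M(\lambda-2\rho)$; the $-2\rho$ shift arises from the calculation of the infinitesimal $\fb$-action on the fiber. It follows that for any $s \in I(\lambda-2\rho) = \Ann M(\lambda-2\rho)$, the fiber value $\psi^\lambda(s)(eB) \in M(\lambda-2\rho)$ is zero. The two-sided ideal $I(\lambda-2\rho)$ is $\Ad(G)$-stable, and the $G$-equivariance of $\calD^\lambda_{G/B}$ yields the intertwining $g \cdot \psi^\lambda(u) = \psi^\lambda(\Ad(g)u)$; applying this identity with $\Ad(g^{-1})s \in I(\lambda-2\rho)$ in place of $s$ shows $\psi^\lambda(s)(gB)=0$ for every $g\in G$. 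Since $\calD^\lambda_{G/B}$ is locally free as an $\calO_{G/B}$-module, a global section vanishing on every fiber must be zero, so $\psi^\lambda(s)=0$.

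To prove that $\bar\psi^\lambda$ is an isomorphism, I would compare associated gradeds. The order filtration yields $\gr \calD^\lambda_{G/B} \cong \Sym \Theta_{G/B} \cong \pi_* \calO_{T^*(G/B)}$, and the moment map identifies $T^*(G/B)$ with the Springer resolution $\widetilde{\calN}\to\calN$ of the nilpotent cone. Rational singularities of $\calN$ force $H^i(\widetilde{\calN},\calO)=0$ for $i>0$ and $\Gamma(\widetilde{\calN},\calO) = \bC[\calN]$; a direct-limit argument over the filtration then yields $H^i(G/B,\calD^\lambda_{G/B})=0$ for $i>0$ and $\gr\Gamma(\calD^\lambda_{G/B}) \cong \bC[\calN]$. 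On the algebraic side, for antidominant $\lambda$ the combined theorems of Duflo and Kostant give $I(\lambda-2\rho) = (\ker \chi_{\lambda-2\rho})\cdot\calU(\fg)$ and $\gr I(\lambda-2\rho) = \bC[\fg^*]^G_+\cdot \bC[\fg^*]$, so $\gr {\rm U}^\lambda \cong \bC[\calN]$ as well. The induced graded map $\gr\bar\psi^\lambda: \bC[\calN]\to\bC[\calN]$ is a surjective graded ring homomorphism between rings of the same Hilbert series, hence bijective; a standard filtered-graded lifting argument, using that both filtrations are exhaustive, bounded below, and have finite-dimensional graded pieces, concludes that $\bar\psi^\lambda$ itself is an isomorphism.

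The hard part is the graded identification $\gr {\rm U}^\lambda \cong \bC[\calN]$, which rests on the nontrivial representation-theoretic inputs of Duflo (that antidominant primitive ideals are centrally generated) and Kostant (that $\bC[\fg^*]^G_+ \cdot \bC[\fg^*]$ is the defining ideal of $\calN$). The antidominance hypothesis on $\lambda$ is essential here, since for non-antidominant $\lambda$ the annihilator $I(\lambda-2\rho)$ can strictly contain the central ideal and $\gr {\rm U}^\lambda$ can be a proper quotient of $\bC[\calN]$; handling the general case, as needed for the derived localization statement quoted in the introduction, would require additional ideas.
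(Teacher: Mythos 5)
The paper does not prove this proposition: it is quoted directly from Beilinson--Bernstein's Comptes Rendus note, and your graded-comparison argument reproduces the essential content of their original proof. (For the fibre identification the more apposite reference in this paper is Proposition~\ref{fiberGVM} rather than Proposition~\ref{wequivequiv}, but that is a small point.) The plan --- kill $I(\lambda-2\rho)$ by evaluating on fibres and using $G$-equivariance, identify $\gr\Gamma(\calD^\lambda_{G/B})$ with $\bC[\calN]$ via the Springer resolution and cohomology vanishing, identify $\gr{\rm U}^\lambda$ with $\bC[\calN]$ via Kostant's freeness theorem, and conclude by a filtered-graded lifting --- is sound, and it is the expected argument for this statement.

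Your closing paragraph, however, contains a genuine misstatement. You assert that ``for non-antidominant $\lambda$ the annihilator $I(\lambda-2\rho)$ can strictly contain the central ideal.'' This is false: the annihilator of \emph{any} Verma module $M(\mu)$ equals $\calU(\fg)\cdot\ker\chi_\mu$, with no dominance hypothesis whatsoever (this is in \cite{Jan83}, and is part of Duflo's circle of results; Duflo's theorem is not restricted to antidominant parameters for Verma modules --- it is the passage to simple quotients where antidominance matters). Consequently the antidominance hypothesis appears in the statement of the Lemme only because Beilinson--Bernstein's Th\'eor\`eme principal needs it, not because this isomorphism does. The paper itself records, just after Proposition~\ref{alphasurj} and the Borho--Brylinski criterion, that $\psi^\lambda$ is an isomorphism for \emph{all} $\lambda$ on the full flag variety, since the moment map $T^*(G/B)\to\fg^*$ is birational onto a normal image. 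Once you drop the erroneous belief about the Verma annihilator, your own argument already covers the general $\lambda$ case that you describe as ``requiring additional ideas.'' Where antidominance genuinely matters in this paper is the surjectivity of $a^\lambda$ for \emph{partial} flag varieties (Proposition~\ref{asurj}), where the moment map can fail to be birational onto a normal image; that is a different issue and should not be conflated with the $G/B$ situation.
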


\begin{thm}[{\cite[Th\'eor\`eme principal]{BeiBer81}}]\label{BDequiv}
Assume furthermore that $\lambda$ is regular. 
The functor $\Gamma: \calD^\lambda_{G/B}\modu \to {\rm U}^\lambda\modu$ which associates to a $\calD^\lambda_{G/B}$-module its global sections is an equivalence of categories. 
\end{thm}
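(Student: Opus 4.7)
The plan is to realize $\Gamma$ as the right half of an adjoint equivalence, with left adjoint the localization functor $\Delta^\lambda(V):=\calD^\lambda_{G/B}\otimes_{{\rm U}^\lambda}V$. Adjointness $\Hom_{\calD^\lambda_{G/B}}(\Delta^\lambda V,\calM)\cong\Hom_{{\rm U}^\lambda}(V,\Gamma(\calM))$ is formal from the tensor-hom adjunction together with the identification ${\rm U}^\lambda\cong\Gamma(\calD^\lambda_{G/B})$ of the preceding proposition. It then suffices to show both the unit $V\to\Gamma\Delta^\lambda V$ and the counit $\Delta^\lambda\Gamma(\calM)\to\calM$ are isomorphisms.

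The first technical step is exactness of $\Gamma$ on $\calD^\lambda_{G/B}\modu$ for antidominant $\lambda$, i.e., $H^i(G/B,\calM)=0$ for $i>0$. I would reduce to coherent $\calM$ by commuting cohomology with filtered colimits, then choose a good filtration $F$ on $\calM$ so that $\gr^F\!\calM$ is a coherent $\calO$-module on (the zero section of) $T^*(G/B)$. The heart is the classical Casimir argument: the center $Z(\calU(\fg))$ acts on $\calM$ and hence on each $H^i(G/B,\calM)$ by the infinitesimal character of $\lambda-2\rho$, whereas on Serre twists $\calM\otimes\calL^{n\rho}_{G/B}$ the central character is shifted in a way that, under the antidominance and regularity hypotheses on $\lambda$, cannot agree with $\chi_{\lambda-2\rho}$ up to Weyl orbit for $n>0$. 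Combined with Serre vanishing $H^i(G/B,\gr^F\!\calM\otimes\calL^{n\rho}_{G/B})=0$ for $n\gg0$, a generalized eigenspace decomposition of cohomology by the center then propagates the vanishing back from $n\gg0$ to $n=0$. Granted exactness, the unit is immediate: for $V={\rm U}^\lambda$ it reduces to $\psi^\lambda$, and the general case follows from a free presentation and the five-lemma.

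The decisive step is that the counit $\Delta^\lambda\Gamma(\calM)\to\calM$ is an isomorphism. Applying the exact functor $\Gamma$ to the kernel $K$ and cokernel $C$ of the counit, and using the unit isomorphism $\Gamma\Delta^\lambda\cong\id$ already established, yields $\Gamma(K)=\Gamma(C)=0$. Thus it is enough to prove that $\Gamma$ is faithful, i.e.\ every nonzero $\calD^\lambda_{G/B}$-module has a nonzero global section. Here regularity enters in an essential way: via Proposition~\ref{wequivequiv} the fiber of a $\calD^\lambda$-module at the base point is a twisted $(\fg,B)$-module, hence a quotient of a Verma module $M(\lambda-2\rho)$; regularity of $\lambda$ ensures that no wall-crossing collapses the Verma and that the associated highest-weight line survives the passage to global sections, producing the required nonzero element of $\Gamma(\calM)$.

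The main obstacle is precisely this faithfulness step. The Casimir vanishing of the exactness argument requires only antidominance and gives no control over zero modules; it is regularity that rigidifies both the central-character strata and the geometry of the moment map $T^*(G/B)\to\fg^*$ sufficiently to force conservativity of $\Gamma$ and thus upgrade the exact adjunction into an equivalence of categories.
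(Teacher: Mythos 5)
The paper does not prove this theorem; it cites \cite[Th\'eor\`eme principal]{BeiBer81} and merely recalls the statement and the description of the inverse functor $\Delta^\lambda$. So there is no internal proof to compare against, and your proposal should be evaluated as a reconstruction of the Beilinson--Bernstein argument.

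Your overall architecture (exhibit $\Delta^\lambda\dashv\Gamma$, prove $\Gamma$ exact, prove $\Gamma$ conservative, deduce the unit and counit are isomorphisms) is the right one. However, both technical steps contain genuine gaps.

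For exactness, the issue is that $\calM$ and $\calM\otimes\calL^{n\rho}_{G/B}$ live over \emph{different} TDOs ($\calD^\lambda_{G/B}$ versus $\calD^{\lambda+n\rho}_{G/B}$) and there is no natural map between their cohomologies. Saying that "a generalized eigenspace decomposition of cohomology by the center then propagates the vanishing back from $n\gg0$ to $n=0$" does not close this gap: a decomposition by central characters separates the two cohomologies but does not relate them. The missing mechanism in Beilinson--Bernstein's argument is tensoring $\calM$ with the \emph{finite-dimensional $G$-representation} $V_{n\rho}$ (a constant sheaf, so $H^i(\calM\otimes V_{n\rho})\cong H^i(\calM)\otimes V_{n\rho}$), together with the filtration of $\calM\otimes V_{n\rho}$ whose graded pieces are $\calM\otimes\calL^\nu$ for the weights $\nu$ of $V_{n\rho}$. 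The central character analysis is used to split off the extreme piece $\calM\otimes\calL^{n\rho}$ as a direct summand (this requires the antidominance of $\lambda$, which forces the extreme central character to be disjoint from those of the interior pieces), and only then does Serre vanishing transmit back. Note also that exactness needs only antidominance; regularity plays no role in this step, contrary to what your argument suggests.

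For faithfulness, the proposed argument is incorrect as stated. Proposition~\ref{wequivequiv} is an equivalence between \emph{weakly $G$-equivariant} $\calD^\lambda_X$-modules and twisted $(\fg,G_x)$-modules; an arbitrary $\calD^\lambda_{G/B}$-module is not weakly equivariant, so its fiber at $eB$ is merely a vector space with no $\fg$-module structure and certainly no relation to a Verma module. Moreover, even for equivariant modules the fiber at the base point can vanish while the module does not, so producing a global section from "the highest-weight line surviving" is not a valid deduction. Beilinson--Bernstein's proof of conservativity instead passes through the associated graded on $T^*(G/B)$, the affineness of the moment map to the nilpotent cone, and Serre's theorem: for a nonzero coherent $\calM$ with good filtration, $\Gamma(\gr^F\!\calM\otimes\calL^{n\rho})\neq 0$ for $n\gg0$, whence $\Gamma(\calM\otimes\calL^{n\rho})\neq 0$; one then again uses the finite-dimensional tensoring and the central character splitting (this time needing \emph{regularity} of $\lambda$ to make the translation functor an equivalence rather than merely split) to pull this back to $\Gamma(\calM)\neq 0$. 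This is the step you would have to supply to complete the proof.
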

This is the famous Beilinson-Bernstein localization theorem. An inverse to the functor $\Gamma$ is described as follows. 
Let $M$ be a ${\rm U}^\lambda$-module. To each open subset $V$ of $G/B$, we associate $\Gamma(V, \calD^\lambda_{G/B}) \otimes_{{\rm U}^\lambda} M$. The sheafification of this presheaf is a $\calD^\lambda_{G/B}$-module $\Delta^\lambda(M)$. This construction gives a functor $\Delta^\lambda: {\rm U}^\lambda\modu \to \calD^\lambda_{G/B}\modu$. 

If $\lambda$ is not antidominant, the exactness of the functor $\Gamma$ fails. In this case for regular $\lambda$ we have the following equivalence between derived categories due to Beilinson and Bernstein. 

\begin{thm}[{\cite[\S 13. Corollary]{BeiBer83}}]
Assume that $\lambda \in \fh^*$ is regular. 
The functor $\bR\Gamma: D^b(\calD^\lambda_{G/B}\modu) \to D^b({\rm U}^\lambda\modu)$ is an equivalence of categories. Its inverse is given by $\bL\Delta^\lambda$. 
\end{thm}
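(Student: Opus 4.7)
The plan is to reduce to the antidominant-regular case, where Theorem \ref{BDequiv} already supplies a derived equivalence, by conjugating with an intertwining functor on the full flag variety.

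Since $\lambda$ is regular, I can pick $w \in W$ with $\mu := w^{-1}*\lambda$ antidominant (and still regular), where $w*\nu := w(\nu-\rho)+\rho$. Then $\lambda - 2\rho$ and $\mu - 2\rho$ lie in a single $W$-orbit for the dot action, so $M(\lambda-2\rho)$ and $M(\mu-2\rho)$ share the same annihilator, and ${\rm U}^\lambda$ and ${\rm U}^\mu$ coincide as quotients of $\calU(\fg)$. By Theorem \ref{BDequiv}, $\Gamma: \calD^\mu_{G/B}\modu \to {\rm U}^\mu\modu$ is an exact equivalence of abelian categories with inverse $\Delta^\mu$, which immediately lifts to a triangulated equivalence $\bR\Gamma: D^b(\calD^\mu_{G/B}\modu) \xrightarrow{\sim} D^b({\rm U}^\lambda\modu)$ with inverse $\bL\Delta^\mu$.

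Next I would invoke Theorem \ref{main1} with $I = J = \emptyset$ to obtain the intertwining equivalence $R^w_+: D^b(\calD^\lambda_{G/B}\modu) \xrightarrow{\sim} D^b(\calD^\mu_{G/B}\modu)$, whose inverse is $R^{w^{-1}}_!$. The composition $\bR\Gamma \circ R^w_+$ is then an equivalence with target $D^b({\rm U}^\lambda\modu)$. The key remaining task is to identify this composition with $\bR\Gamma^\lambda$ itself. Using the natural morphism $I^w_+: \bR\Gamma^\lambda \to \bR\Gamma^\mu \circ R^w_+$ of Theorem \ref{main2}, this reduces to checking the irreducibility hypothesis of that theorem. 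For $I = J = \emptyset$ the elements $v[\alpha_i, I_i]$ reduce to simple reflections $s_{\alpha_i}$ and the generalized Verma modules in question become rank-one Verma modules for $\fs\fl_2$-Levi subalgebras; their irreducibility follows from the regularity of $\lambda$ (equivalently, of every weight in its $*$-orbit). Once $I^w_+$ is an isomorphism, we have $\bR\Gamma^\lambda \cong \bR\Gamma^\mu \circ R^w_+$, and the right-hand side is an equivalence with inverse $R^{w^{-1}}_! \circ \bL\Delta^\mu$; a parallel verification via the corresponding compatibility for $R^{w^{-1}}_!$ identifies this inverse with $\bL\Delta^\lambda$.

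The principal obstacle is showing that $I^w_+$ is an isomorphism in this generality. I would run an induction on $\ell(w)$ using a reduced expression, reducing the whole problem to the single-simple-reflection case. That case localizes along the $\bP^1$-fibers of the projection $G/B \to G/P_{\{\alpha\}}$, where the regularity condition $\langle\lambda_{i-1}, \check\alpha_i\rangle \notin \bZ_{\geq 0}$ guarantees that the relevant $\fs\fl_2$-Verma module is irreducible. This irreducibility is what prevents a nontrivial obstruction from appearing in the comparison morphism, so $I^w_+$ is genuinely invertible at each step and the global assertion follows by concatenating the simple-reflection equivalences.
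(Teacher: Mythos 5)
This theorem is not proved in the paper; it is quoted from \cite[\S 13. Corollary]{BeiBer83} and used throughout as a known input. There is therefore no internal argument for your attempt to be compared against, and the issue is whether your proposal would stand on its own.

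The strategy you outline (conjugate the global section functor by an intertwining functor to reduce to an antidominant parameter, where Theorem \ref{BDequiv} gives an exact equivalence) is indeed the idea of \cite{BeiBer83} and of Mili\v ci\'c's book, so as a proof strategy it is legitimate. But the execution is circular \emph{within this paper's logical structure}: you invoke Theorem \ref{main2} to identify $\bR\Gamma^\lambda$ with $\bR\Gamma^\mu \circ R^w_+$. The proof of Theorem \ref{thmisom} in \S\ref{sectsect} uses Proposition \ref{propequiv} (it replaces $\calM$ by $\bL\Delta_I$ of a free resolution of $\bR\Gamma(\calM)$). Proposition \ref{propequiv} rests on Lemma \ref{faithful}, and the proof of Lemma \ref{faithful} explicitly uses that $\bR\Gamma(G/B,-)$ is an equivalence for an \emph{arbitrary} regular $\lambda$. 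That is exactly the statement you are trying to establish; Theorem \ref{BDequiv} alone only covers the antidominant case. So you cannot cite Theorem \ref{main2} here. An honest proof along your lines would need to establish $\bR\Gamma^\lambda \cong \bR\Gamma^\mu \circ R^w_+$ for $G/B$ directly, using only what is available upstream of Proposition \ref{propequiv} --- namely Theorem \ref{BDequiv}, the intertwining equivalence Theorem \ref{main1} (which \emph{is} proved independently in \S\ref{sectradon}), and a hands-on analysis of the $\bP^1$-fibration case; this is essentially the content of Mili\v ci\'c's argument.

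A secondary, smaller gap: you assert that for $I=J=\emptyset$ the irreducibility hypothesis of Theorem \ref{main2} "follows from the regularity of $\lambda$." It does not. The $\fs\fl_2$-Verma module in question is $M^{\fl_{\{\alpha_i\}}}(s_{\alpha_i}\lambda_{i-1})$, which is irreducible iff $\langle \lambda_{i-1},\check\alpha_i\rangle \notin \bZ_{\leq 0}$; regularity only guarantees $\langle \lambda_{i-1}-\rho,\check\alpha_i\rangle \ne 0$. One must choose the reduced expression of $w$ so that each step crosses from the dominant to the antidominant side for the corresponding root, which is possible when $w^{-1}*\lambda$ is antidominant but is an additional choice to be justified, not an automatic consequence of regularity.
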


We now turn to the case of partial flag varieties.
Let $I$ be a subset of $\Pi$ and $\lambda \in (\fh/\fh_I)^*$. 
We first consider the general property of the global section functor. 
Taking global sections induces a functor $\Gamma : \calD^\lambda_{G/P_I}\modu \to \Gamma(\calD^\lambda_{G/P_I})\modu$. Let $\Delta_I$ be the localization functor $\calD^{\lambda}_{G/P_I}\otimes_{\Gamma(\calD^{\lambda}_{G/P_I})}(\bullet)$. 
The localization functor $\Delta_I$ is left adjoint to $\Gamma$, 
i.e. we have a functorial isomorphism $\Hom_{\calD^{\lambda}_{G/P_I}}(\Delta_I(N), \calM) \cong \Hom_{\Gamma(\calD^{\lambda}_{G/P_I})}(N, \Gamma(\calM))$ 
for $N \in \Gamma(\calD^\lambda_{G/P_I})\modu$ and $\calM \in \calD^\lambda_{G/P_I}\modu$. 
We denote its counit and unit by $\epsilon :\Delta_I \circ \Gamma \to \id$ and $\eta : \id \to \Gamma \circ \Delta_I$. 
We use the same symbols $\epsilon$ and $\eta$ for unit and counit for derived functors. 

The following theorem is stated in \cite{BeiBer81}. A proof is explained in \cite[Theorem 6.3]{Bie90}. 

\begin{prop}
Assume that $\lambda$ is regular and antidominant. Then the functor $\Gamma: \calD^\lambda_{G/{P_I}}\modu \to \Gamma(\calD^\lambda_{G/P_I})\modu$ is an equivalence of categories. 
\end{prop}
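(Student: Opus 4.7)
The plan is to reduce to the Beilinson-Bernstein theorem on $G/B$ (Theorem \ref{BDequiv}) via the smooth projective quotient map $\pi: G/B \to G/P_I$, whose fibers are isomorphic to the flag variety $P_I/B$ of the Levi $L_I$. By the earlier proposition on TDOs on homogeneous spaces we have $\pi^{\#}\calD^{\lambda}_{G/P_I} \cong \calD^{\lambda}_{G/B}$, so $\pi^*$ sends $\calD^{\lambda}_{G/P_I}$-modules to $\calD^{\lambda}_{G/B}$-modules. The weight $\lambda \in (\fh/\fh_I)^* \subset \fh^*$ is automatically regular and antidominant when viewed in $\fh^*$: for any $\alpha \in \Delta_I^+$ one has $\langle \lambda-\rho, \check\alpha\rangle = -\langle \rho_I, \check\alpha\rangle$, a negative integer, so the regularity/antidominance hypotheses on $\lambda$ in $(\fh/\fh_I)^*$ give the same in $\fh^*$ in the sense of Definition \ref{regdom}.

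Next I would establish exactness of $\Gamma$ on $\calD^{\lambda}_{G/P_I}\modu$. For $\calM$ in this category, the projection formula together with $\bR\pi_*\calO_{G/B} \cong \calO_{G/P_I}$ (which follows from Borel-Weil-Bott applied to the fibers $P_I/B$) yields $\bR\pi_*\pi^*\calM \cong \calM$. Hence $\bR\Gamma(G/P_I, \calM) \cong \bR\Gamma(G/B, \pi^*\calM)$, and by Theorem \ref{BDequiv} the right-hand side is concentrated in degree zero, giving exactness. Faithfulness is then immediate: if $\calM \neq 0$ then $\pi^*\calM \neq 0$ by faithful flatness of $\pi$, so by Beilinson-Bernstein $\Gamma(G/B, \pi^*\calM) \neq 0$, hence $\Gamma(G/P_I, \calM) \neq 0$.

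With $\Gamma$ exact and faithful and $\Delta_I$ its left adjoint, the equivalence follows by a standard adjunction argument. First, the unit $\eta_N: N \to \Gamma\Delta_I(N)$ is an isomorphism: this is trivial when $N = \Gamma(\calD^{\lambda}_{G/P_I})$, hence holds for direct sums of copies thereof (since $\Gamma$ commutes with direct sums, being exact with a left adjoint), and then for arbitrary $N$ by choosing a presentation by free modules and applying the five lemma, using right exactness of $\Delta_I$ and exactness of $\Gamma$. Second, for the counit $\epsilon_\calM: \Delta_I\Gamma(\calM) \to \calM$, the triangle identity $\Gamma(\epsilon_\calM) \circ \eta_{\Gamma\calM} = \id$ combined with $\eta_{\Gamma\calM}$ being an isomorphism forces $\Gamma(\epsilon_\calM)$ to be an isomorphism; exactness of $\Gamma$ then annihilates $\ker\epsilon_\calM$ and $\coker\epsilon_\calM$, and faithfulness yields that $\epsilon_\calM$ is an isomorphism.

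The main obstacle is ensuring the projection-formula identification $\bR\pi_*\pi^*\calM \cong \calM$ is compatible with the $\calD^{\lambda}_{G/P_I}$-module structure (not merely the underlying $\calO$-structure); this is where the identification $\pi^{\#}\calD^{\lambda}_{G/P_I} \cong \calD^{\lambda}_{G/B}$ does the real work, after which each remaining step is a routine matter of assembling the adjunction with exactness and faithfulness.
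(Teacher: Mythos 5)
The paper does not prove this proposition; it cites Bien \cite[Theorem 6.3]{Bie90}. Your proposal therefore supplies a proof the paper defers, and the route you take --- reduce cohomology vanishing and faithfulness to the $G/B$ case via the smooth projective quotient $\pi : G/B \to G/P_I$ and the identification $\pi^{\#}\calD^{\lambda}_{G/P_I} \cong \calD^{\lambda}_{G/B}$, then deduce the equivalence from exactness + faithfulness by the adjunction $(\Delta_I,\Gamma)$ --- is sound and is essentially the same strategy the paper itself uses later (Lemma \ref{faithful} invokes Kitchen's compatibility $\bR\Gamma(G/B,-)\circ p^!_I \cong q^*_I \circ \bR\Gamma(G/P_I,-)$ and the fact that $p^!_I$ is faithful). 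Each step checks out: $\bR\pi_*\calO_{G/B} \cong \calO_{G/P_I}$ holds since the fibers are the full flag variety of $L_I$; the projection formula then gives $\bR\Gamma(G/P_I,\calM) \cong \bR\Gamma(G/B,\pi^*\calM)$ as complexes of vector spaces, which is all that is needed for cohomology vanishing and for faithfulness (reflection of zero objects); the five-lemma argument for the unit and the triangle-identity argument for the counit are standard and correct.

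Two small inaccuracies. First, the parenthetical justification ``$\Gamma$ commutes with direct sums, being exact with a left adjoint'' is wrong: a functor with a left adjoint preserves limits (products), not colimits (coproducts), and exactness is also irrelevant here. The fact you need is true, but for a different reason --- quasicoherent cohomology on a noetherian (or qcqs) scheme commutes with filtered colimits, hence with arbitrary direct sums; alternatively, for the free modules you actually use, it suffices to combine Proposition \ref{Dcohvan} with the \v{C}ech description of $H^i(G/P_I,-)$. Second, the closing worry about the $\calD^{\lambda}_{G/P_I}$-module compatibility of $\bR\pi_*\pi^*\calM \cong \calM$ is not in fact an obstacle for your argument as structured: exactness and faithfulness are statements about vanishing of the underlying cohomology groups, so the $\calO$-module level identification already suffices, and the $\fg$-module compatibility (which the paper obtains from Proposition \ref{propKit}) is only needed for finer statements such as Proposition \ref{propequiv} where one tracks the ${\rm U}^\lambda_I$-structure. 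Neither point affects the correctness of your proof, but the first should be fixed before this could stand as a complete argument.
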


Next we recall properties of TDO $\calD^\lambda_{G/P_I}$.
The higher cohomology of TDO itself vanishes. 
\begin{prop}[{\cite[Lemma 1.4]{BorBry82}}]\label{Dcohvan}
For any $\lambda \in (\fh/\fh_I)^*$ and for any $i > 0$, we have an isomorphism 
$H^i(G/P_I, \calD^\lambda_{G/P_I}) \cong H^i(T^*G/P_I, \calO_{T^*G/P_I}) \cong 0$. 
\end{prop}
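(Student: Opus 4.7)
The plan is to prove both vanishing statements simultaneously by pulling everything down to the cotangent bundle via the order filtration on $\calD^\lambda_{G/P_I}$. By property (5) of Definition \ref{defTDO}, the associated graded of the filtration $F^\bullet\calD^\lambda_{G/P_I}$ is $\Sym^\bullet \Theta_{G/P_I}$. Letting $\pi: T^*G/P_I \to G/P_I$ denote the bundle projection, $\pi$ is affine and $\pi_*\calO_{T^*G/P_I} = \Sym^\bullet \Theta_{G/P_I}$, so affineness of $\pi$ gives
$$ H^i(G/P_I, \Sym^\bullet \Theta_{G/P_I}) \cong H^i(T^*G/P_I, \calO_{T^*G/P_I}) $$
as graded vector spaces. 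Hence the second isomorphism in the statement is tautological once one exhibits the vanishing on $T^*G/P_I$; and this in turn is precisely what one needs to control the graded pieces of $\calD^\lambda_{G/P_I}$.

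Next I would establish the geometric input $H^i(T^*G/P_I, \calO_{T^*G/P_I}) = 0$ for $i > 0$. The moment map $\mu: T^*G/P_I \to \fg^*$ identifies $T^*G/P_I$ with the Springer-type resolution of the closure of the Richardson nilpotent orbit attached to $P_I$: this closure is a normal affine variety, and $\mu$ is projective with trivial relative dualizing sheaf (since $T^*G/P_I$ is symplectic). Grauert--Riemenschneider vanishing therefore yields $R^i\mu_*\calO_{T^*G/P_I} = 0$ for $i>0$, and because the target of $\mu$ is affine, the Leray spectral sequence collapses to give $H^i(T^*G/P_I, \calO_{T^*G/P_I}) = 0$. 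This is the content of the Borho--Brylinski lemma being cited.

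Finally, to transfer the vanishing to $\calD^\lambda_{G/P_I}$ itself, I would run an induction on $m$ along the short exact sequences
$$ 0 \to F^{m-1}\calD^\lambda_{G/P_I} \to F^m\calD^\lambda_{G/P_I} \to \Sym^m\Theta_{G/P_I} \to 0, $$
starting from $F^0\calD^\lambda_{G/P_I} = \calO_{G/P_I}$ and using that each $H^i(G/P_I, \Sym^m \Theta_{G/P_I})$ vanishes in positive degree by the previous step. The long exact sequences in cohomology then force $H^i(G/P_I, F^m\calD^\lambda_{G/P_I}) = 0$ for every $m$ and every $i > 0$. Since $\calD^\lambda_{G/P_I} = \varinjlim_m F^m\calD^\lambda_{G/P_I}$ and sheaf cohomology on the Noetherian space $G/P_I$ commutes with filtered colimits, we obtain the desired vanishing on $\calD^\lambda_{G/P_I}$.

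The main obstacle is unambiguously the second step: the vanishing $H^i(T^*G/P_I, \calO) = 0$ rests on nontrivial facts about the Richardson orbit (normality of its closure, existence and rationality of the Springer-type resolution). Once that geometric input is taken as given, the rest of the argument is a formal manipulation with the order filtration.
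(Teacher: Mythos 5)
Your argument follows essentially the same route as the cited reference \cite[Lemma 1.4]{BorBry82}; the paper itself only quotes the result and supplies no proof, so there is nothing internal to compare against. The chain you describe is the standard one: the order filtration on $\calD^\lambda_{G/P_I}$ has associated graded $\Sym^\bullet\Theta_{G/P_I} \cong \pi_*\calO_{T^*G/P_I}$ (affine $\pi$), the moment map gives the geometric vanishing on $T^*G/P_I$, and the filtration together with commutation of cohomology with filtered colimits on the Noetherian space $G/P_I$ transfers it to $\calD^\lambda_{G/P_I}$. All of this is sound, including the base case $m=0$, which is subsumed in the degree-zero graded piece.

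One correction is worth making, precisely at the point you single out as ``the main obstacle.'' You assert that the closure of the Richardson orbit is normal, and you call the moment map a ``resolution.'' Neither claim is needed, and the first is false in general (Richardson orbit closures outside type $A$ can fail to be normal, and the moment map onto the Richardson closure can have generic degree $>1$). What Grauert--Riemenschneider actually requires is only that $\mu$ be proper, that $T^*G/P_I$ be smooth with $\omega_{T^*G/P_I}\cong\calO$ (which the symplectic form gives), and that $\mu$ be generically finite onto its image --- all of which hold regardless of whether the image is normal or the map birational. Then $R^q\mu_*\calO_{T^*G/P_I}=0$ for $q>0$, and since the target sits inside the affine space $\fg^*$, the Leray spectral sequence collapses using only quasi-coherence of $R^q\mu_*\calO$, not any identification of $\mu_*\calO$ with the structure sheaf of the image. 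So the geometric input is lighter than you feared: no normality of the orbit closure, no birationality of the Springer-type map.
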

\noindent The identity coset $eP_I \in G/P_I$ is the unique closed $B$-orbit. 
The fiber of $\calD^\lambda_{G/P_I}$ at $eP_I$ is an irreducible $\calD^\lambda_{G/P_I}$-module supported on the point $eP_I$. 
The vector space of sections of $\calD^\lambda_{G/P_I}(eP_I)$ has a structure of a $\fg$-module through the homomorphism $\psi^{\lambda}: \calU(\fg) \to \Gamma(\calD^\lambda_{G/P_I})$. This $\fg$-module is isomorphic to a generalized Verma module. 

\begin{prop}[{\cite[Proposition 4]{Soe89}}]\label{fiberGVM}
The $\fg$-module $\Gamma(\calD^\lambda_{G/P_I}(eP_I))$ is isomorphic to the generalized Verma module $M^{\fg}_{\fp_I}(\lambda - 2\rho_{\fn_I})$. 
\end{prop}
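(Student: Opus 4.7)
The plan is to construct a canonical cyclic vector in $\Gamma(\calD^\lambda_{G/P_I}(eP_I))$, determine how $\fp_I$ acts on it, invoke the universal property of generalized Verma modules, and then upgrade surjectivity to bijectivity by a filtration argument.

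First I would set $\delta \in \Gamma(\calD^\lambda_{G/P_I}(eP_I))$ to be the image of the section $1 \in \calD^\lambda_{G/P_I}$ under the quotient $\calD^\lambda_{G/P_I} \twoheadrightarrow \calD^\lambda_{G/P_I}\otimes_{\calO_{G/P_I}}\bC_{eP_I}$. Since $\calD^\lambda_{G/P_I}(eP_I)$ is supported at the single point $eP_I$, its global sections coincide with the stalk there, and $\delta$ generates them as a left $\calD^\lambda_{G/P_I}$-module. Because $\calO_{G/P_I}$ acts on $\delta$ through its evaluation at $eP_I$ and the infinitesimal $G$-action is surjective onto $\Theta_{G/P_I}$ near $eP_I$, the subalgebra $\psi^\lambda(\calU(\fg))$ already suffices to describe this action, so $\delta$ generates $\Gamma(\calD^\lambda_{G/P_I}(eP_I))$ even as a $\fg$-module.

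Next I would compute the $\fp_I$-character of $\delta$. For $X \in \fp_I$ the operator $\psi^\lambda(X) \in F^1\calD^\lambda_{G/P_I}$ has principal symbol the vector field generated by $X$, which vanishes at $eP_I$ since $P_I$ fixes this point. Choosing coordinates $(x_i)$ on the open cell $\bar U_I \cdot eP_I$ centered at $eP_I$ and a local trivialization of $\calD^\lambda_{G/P_I}$, one writes $\psi^\lambda(X) = \sum_{i,j} c_{ij} x_j \partial_i + b + O(\fm_{eP_I}^2 \cdot F^1)$, where $(c_{ij})$ is the derivative at $eP_I$ of the vector field and $b$ is a scalar depending on the $\lambda$-twist. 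Using $x_j\partial_i\delta = -\delta_{ij}\delta$ one gets $\psi^\lambda(X)\cdot\delta = (b - \operatorname{tr}(c))\delta$. Now $(c_{ij})$ realizes the adjoint action of $X$ on $T_{eP_I}(G/P_I) \cong \fg/\fp_I$, so $\operatorname{tr}(c) = \operatorname{tr}_{\fg/\fp_I}\operatorname{ad}(X)$; restricted to $\fh$ this sums the weights of $\fg/\fp_I$ and equals $\pm 2\rho_{\fn_I}$ with the sign fixed by the paper's choice of positive roots. The scalar $b$ is extracted by applying the defining relation $A = \lambda_X(A)$ of $\calD^\lambda_{G/P_I}$ to a local lift of the constant section $1 \otimes X$ of $\fg_{G/P_I}$ into $\calI_{G/P_I}$ near $eP_I$, giving $b = \lambda(X)$. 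Combining the two contributions yields $\psi^\lambda(X)\cdot\delta = (\lambda - 2\rho_{\fn_I})(X)\cdot\delta$.

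The universal property of the generalized Verma module applied to $\delta$ then produces a unique $\fg$-linear map $\varphi: M^\fg_{\fp_I}(\lambda - 2\rho_{\fn_I}) \to \Gamma(\calD^\lambda_{G/P_I}(eP_I))$ sending the canonical generator to $\delta$, and by the first step $\varphi$ is surjective. For injectivity I would equip the source with the PBW filtration, whose graded piece is $\Sym(\fg/\fp_I)$ via the decomposition $\fg = \bar\fn_I \oplus \fp_I$, and the target with the filtration induced from $F^\bullet\calD^\lambda_{G/P_I}$, whose graded piece is $\Sym^\bullet(\fg/\fp_I)$ by axiom~(5) of Definition~\ref{defTDO} combined with $\Theta_{G/P_I}(eP_I) \cong \fg/\fp_I$. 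The map $\varphi$ respects these filtrations and induces the identity on associated gradeds, so it is an isomorphism. The principal difficulty lies in the character computation of the second step: the $2\rho_{\fn_I}$ shift emerges from $\operatorname{tr}_{\fg/\fp_I}\operatorname{ad}(X)$ and must combine with the $\lambda$-contribution with exactly the sign dictated by the conventions set up in Subsections~\ref{sectTDO} and~\ref{sectflag} (in particular the choice of positive roots and the definition of $\calD^\lambda_{G/P_I}$ through the isotropy algebroid). Keeping the sign bookkeeping consistent is the delicate point; the remaining filtration argument is routine.
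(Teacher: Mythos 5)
The paper itself gives no proof of this proposition; it is cited from Soergel \cite[Proposition 4]{Soe89}. Your route---cyclic vector $\delta$, compute the $\fp_I$-character of $\delta$, apply the universal property of $M^\fg_{\fp_I}$, then compare associated gradeds via PBW on the source and $F^\bullet$ on the target---is the natural argument and yields the statement. The delicate point you flag is indeed the sign in the trace computation, and your intermediate equality $\mathrm{tr}(c)=\mathrm{tr}_{\fg/\fp_I}\mathrm{ad}(X)$ is off by a sign, although your hedging with $\pm$ means the conclusion survives. Here is the bookkeeping that fixes it. With the convention making $X\mapsto\xi_X$ a Lie algebra homomorphism (as is required for the Lie algebroid anchor), namely $(\xi_X f)(p)=\frac{d}{dt}\big|_{t=0}f(e^{-tX}p)$, the linearization of $\xi_X$ at the fixed point $eP_I$ is $-\mathrm{ad}(X)$ on $T_{eP_I}\cong\fg/\fp_I$, not $+\mathrm{ad}(X)$: for a curve $e^{sY}\!\cdot eP_I$ one has $e^{-tX}e^{sY}\!\cdot eP_I=e^{s\,\mathrm{Ad}(e^{-tX})Y}\!\cdot eP_I$, whose mixed $(s,t)$-derivative at the origin is $-\mathrm{ad}(X)Y$. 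Since $\fg/\fp_I\cong\bar\fn_I$ has $\fh$-weights summing to $-2\rho_{\fn_I}$, this gives $\mathrm{tr}(c)=-\mathrm{tr}_{\fg/\fp_I}\mathrm{ad}(X)=2\rho_{\fn_I}(X)$, hence $\psi^\lambda(X)\delta=(b-\mathrm{tr}(c))\delta=(\lambda-2\rho_{\fn_I})(X)\delta$ for $X\in\fh$. One more small thing to make explicit before invoking the universal property: the character formula must hold on all of $\fp_I$, not just on $\fh$; for a root vector $X\in[\fp_I,\fp_I]$ this is automatic, since $\mathrm{ad}(X)$ is nilpotent on $\fg/\fp_I$ (so $\mathrm{tr}(c)=0$) and $\lambda$ is a Lie algebra character (so $\lambda(X)=0$). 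The concluding filtration argument is fine as stated.
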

Note that $\calD^\lambda_{G/P_I}(eP_I)$ is irreducible as a $\calD^\lambda_{G/P_I}$-module, but it is not necessarily irreducible as a $\fg$-module, even if $\psi^\lambda$ is surjective. 
Using this proposition, the kernel of $\psi^{\lambda}$ is described as follows. 
\begin{prop}[{\cite[Proposition 14]{Soe89}}]
The kernel of $\psi^\lambda$ coincides with $I_{\fp_I}(\lambda-2\rho_{\fn_I})$. 
\end{prop}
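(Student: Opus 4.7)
My plan is to prove the two inclusions $\ker \psi^\lambda \subset I_{\fp_I}(\lambda - 2\rho_{\fn_I})$ and $I_{\fp_I}(\lambda - 2\rho_{\fn_I}) \subset \ker \psi^\lambda$ separately. The first inclusion is essentially automatic from Proposition \ref{fiberGVM}. The homomorphism $\psi^\lambda$ makes $\bar M := \Gamma(\calD^\lambda_{G/P_I}(eP_I))$ into a $\calU(\fg)$-module, and by Proposition \ref{fiberGVM} this is the generalized Verma module $M^\fg_{\fp_I}(\lambda - 2\rho_{\fn_I})$. Hence any $x$ with $\psi^\lambda(x)=0$ acts as zero on $\bar M$, so lies in $I_{\fp_I}(\lambda-2\rho_{\fn_I})$.

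For the reverse inclusion, let $x \in I_{\fp_I}(\lambda - 2\rho_{\fn_I})$, set $y := \psi^\lambda(x)$, and show that $y = 0$ by checking that its germ vanishes at every point of $G/P_I$. At $eP_I$: since $x$ annihilates $\bar M$, the germ $y_{eP_I} \in \calD^\lambda_{eP_I}$ acts as zero on the fiber module $\calD^\lambda_{G/P_I}(eP_I) = \calD^\lambda_{eP_I}/\calD^\lambda_{eP_I}\fm_{eP_I}$. In any local trivialization around $eP_I$ in which $\calD^\lambda$ becomes the sheaf of ordinary differential operators, this fiber module identifies with the $\delta$-module at the origin, and I claim that the stalk acts faithfully on it. Granting this, $y_{eP_I} = 0$.

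To propagate the vanishing of germs from $eP_I$ to any $gP_I$, I invoke $G$-equivariance. The map $\psi^\lambda$ intertwines the adjoint action on $\calU(\fg)$ with the natural $G$-action on $\Gamma(\calD^\lambda_{G/P_I})$ coming from the $G$-equivariant structure of the TDO $\calD^\lambda_{G/P_I}$. Because $I_{\fp_I}(\lambda - 2\rho_{\fn_I})$ is a two-sided ideal in $\calU(\fg)$ it is stable under every algebra automorphism, hence under $\Ad(G)$; thus $\Ad(g)x$ again lies in $I_{\fp_I}(\lambda - 2\rho_{\fn_I})$ for every $g \in G$. Applying the previous paragraph to $\Ad(g)x$ shows that the germ at $eP_I$ of $\psi^\lambda(\Ad(g)x) = g \cdot y$ vanishes, which, translating back under the $G$-action, is equivalent to $y_{gP_I} = 0$. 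Since $G$ acts transitively on $G/P_I$, every germ of $y$ vanishes and so $y = 0$.

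The main obstacle is the faithfulness claim in the second paragraph: that the stalk $\calD^\lambda_{eP_I}$ acts faithfully on $\calD^\lambda_{eP_I}/\calD^\lambda_{eP_I}\fm_{eP_I}$. This reduces via a local chart to the model statement that the localized Weyl algebra on $\bA^n$ acts faithfully on $\calD/\calD\fm_0 \cong \Sym(T_0\bA^n)$; this in turn follows from the simplicity of the Weyl algebra (so its annihilator of the $\delta$-module, being a two-sided ideal, must vanish), or from a direct argument using the order filtration and the fact that lowering by $x_i$ acts nontrivially on sufficiently high powers of $\partial_i$. Once this local computation is in hand, the rest of the proof is a routine combination of the fiber description in Proposition \ref{fiberGVM}, the $\Ad(G)$-invariance of two-sided ideals, and transitivity of the $G$-action on $G/P_I$.
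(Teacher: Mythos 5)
The paper gives no proof here; it cites \cite[Proposition 14]{Soe89} directly, so there is no internal argument to compare against. Your proof is correct. The one place to be slightly careful is that the ring whose action on the $\delta$-module must be faithful is the stalk $\calO_{X,0}\langle\partial_1,\ldots,\partial_n\rangle$, not the Weyl algebra $A_n$ itself; but the stalk is simple by the same two-step commutator argument (brackets with the $x_i$ drop the $\partial$-order to land in $\calO_{X,0}$, then brackets with the $\partial_i$ produce a unit of the local ring), so the annihilator of $\calD_0/\calD_0\fm_0$, being a proper two-sided ideal, vanishes. The local trivialization you invoke near $eP_I$ exists because the $\bar U_I$-orbit of $eP_I$ is an affine space, on which every TDO is trivial ($H^2(\bA^N,\sigma^{\geq 1}\Omega^{\bullet}_{\bA^N})=0$). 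The propagation step is sound: $\psi^\lambda$ is $\Ad(G)$-equivariant, the two-sided ideal $I_{\fp_I}(\lambda-2\rho_{\fn_I})$ is $\Ad(G)$-stable since $G$ is connected, and a global section of $\calD^\lambda_{G/P_I}$ with all germs zero is zero. In the language the paper sets up just after this proposition, the hard inclusion you establish amounts to the injectivity of $\alpha^\lambda$ on the image of $\psi^\lambda$; combined with $a^\lambda = \alpha^\lambda\circ\psi^\lambda$ and the tautology $\ker a^\lambda = I_{\fp_I}(\lambda-2\rho_{\fn_I})$, that injectivity is an equivalent reformulation of the statement, so your argument fits naturally into the surrounding material.
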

We denote the induced homomorphism ${\rm U}^\lambda_I \to \Gamma(\calD^\lambda_{G/P_I})$ also by $\psi^{\lambda}$.
We have a natural homomorphism of algebras $\Gamma (\calD^\lambda_{G/P_I}) \to {\rm End}_\bC\Gamma(\calD^\lambda_{G/P_I}(eP_I))$. 
By Proposition \ref{fiberGVM} we obtain a homomorphism of algebras $\Gamma (\calD^\lambda_{G/P_I}) \to {\rm End}_\bC(M^{\fg}_{\fp_I}(\lambda- 2\rho_{\fn_I}))$, which is $\fg$-equivariant with respect to the adjoint $\fg$-action on both sides. 
Since the adjoint $\fg$-action on $\Gamma (\calD^\lambda_{G/P_I})$ is locally finite, this homomorphism factors through the $\alpha^\lambda: \Gamma (\calD^\lambda_{G/P_I}) \to L(M^{\fg}_{\fp_I}(\lambda- 2\rho_{\fn_I}), M^{\fg}_{\fp_I}(\lambda- 2\rho_{\fn_I}))$. 

Soergel proved that this homomorphism is always an isomorphism. 

\begin{prop}[{\cite[Corollar 7]{Soe89}}]\label{alphasurj}
The homomorphism $\alpha^\lambda$ is an isomorphism. 
\end{prop}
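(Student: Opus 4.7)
The plan is to prove injectivity and surjectivity of $\alpha^\lambda$ separately. Throughout I will use that $\alpha^\lambda$ is $\fg$-equivariant under the adjoint actions on both sides: for $X \in \fg$ and $d \in \Gamma(\calD^\lambda_{G/P_I})$, a direct computation gives $\alpha^\lambda([\psi^\lambda(X),d]) = [X,\alpha^\lambda(d)]$, where the right-hand bracket is taken in $\operatorname{End}_\bC(M)$ with $\fg$ acting on $M := \Gamma(\calD^\lambda_{G/P_I}(eP_I)) \cong M^\fg_{\fp_I}(\lambda - 2\rho_{\fn_I})$ via $\psi^\lambda$. Since the adjoint $\fg$-action on $\Gamma(\calD^\lambda_{G/P_I})$ is locally finite, $\alpha^\lambda$ is $G$-equivariant. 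Moreover $\alpha^\lambda \circ \psi^\lambda = a^\lambda$, the injection recalled just before Proposition \ref{asurj}.

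For injectivity, I would take $d \in \ker\alpha^\lambda$ and show $d = 0$. The action of $d$ on $M$ is by multiplication in the stalk at $eP_I$ followed by projection to the fiber, and this action is zero exactly when the germ of $d$ at $eP_I$ lies in the left ideal $\fm_{eP_I}\calD^\lambda_{G/P_I,eP_I}$, where $\fm_{eP_I}$ denotes the maximal ideal of $\calO_{G/P_I,eP_I}$. In local coordinates near $eP_I$, this amounts to saying every coefficient function of $d$ vanishes at $eP_I$. Since $\ker\alpha^\lambda$ is $G$-stable, the same conclusion holds at every point $gP_I \in G/P_I$, so every coefficient function of $d$ is identically zero, whence $d = 0$.

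For surjectivity, I would compare both sides as rational $G$-modules via matched filtrations. The order filtration $F^\bullet$ on $\calD^\lambda_{G/P_I}$ satisfies $\gr^\bullet \cong \Sym^\bullet\Theta_{G/P_I}$. Proposition \ref{Dcohvan} together with Bott vanishing for $\Sym^\bullet\Theta_{G/P_I}$ imply that taking global sections is exact on this filtration and
\[\gr^\bullet\Gamma(\calD^\lambda_{G/P_I}) \cong \Gamma(\Sym^\bullet\Theta_{G/P_I}) \cong \Gamma(T^*(G/P_I),\calO),\]
a rational $G$-module independent of $\lambda$ whose isotypic decomposition is given by Kostant's theorem on the coordinate ring of the (partial) cotangent bundle. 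On the target side, the PBW isomorphism $M \cong \calU(\bar\fu_I)\otimes_\bC\bC_{\lambda-2\rho_{\fn_I}}$ equips $L(M,M)$ with a natural filtration whose associated graded, by Frobenius reciprocity and the freeness of the $\calU(\bar\fu_I)$-action on $M$, has the same finite $G$-isotypic multiplicities. Since $\alpha^\lambda$ respects both filtrations and is injective by the previous paragraph, $\gr\alpha^\lambda$ is an injection between $G$-modules of matching finite isotypic multiplicities, hence an isomorphism; by completeness of the filtration, $\alpha^\lambda$ itself is an isomorphism.

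The main obstacle will be the multiplicity matching on the target side: identifying the associated graded of $L(M,M)$ with $\Gamma(T^*(G/P_I),\calO)$ as $G$-modules, uniformly in $\lambda$ and without appealing to the irreducibility of $M$. This is a Kostant-Rallis-type calculation that bypasses antidominant-regularity assumptions and therefore applies for all $\lambda \in (\fh/\fh_I)^*$.
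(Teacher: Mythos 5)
The statement you are asked to prove is exactly \cite[Corollar 7]{Soe89}, and the paper does not re-prove it; it simply cites Soergel. So your proposal should be judged on its own merits as a reconstruction of (or alternative to) Soergel's argument. The high-level architecture — establish $G$-equivariance of $\alpha^\lambda$, prove injectivity by translation over $G/P_I$, then prove surjectivity by matching $G$-module multiplicities via filtered pieces — is the right shape of argument, and the injectivity half is essentially sound (one small correction: the germ of $d$ at $eP_I$ must lie in the \emph{right} ideal $\calD^\lambda_{G/P_I,eP_I}\,\fm_{eP_I}$, not the left ideal, and the vanishing one extracts at each point is that of the \emph{top-order symbol} of $d$, from which $d=0$ follows by descending induction on order; "every coefficient function vanishes" is not literally what comes out, but the conclusion is the same).

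The genuine gap is in the surjectivity half, precisely at the step you flag as "the main obstacle." Two problems. First, the filtration you propose on $\Hom_\bC(M,M)$ (operators of order $\leq n$ with respect to the PBW filtration on $M$) is \emph{not} stable under the adjoint $\fg$-action: for $\xi\in\fg$ and $\phi$ of order $\leq n$, $[\psi^\lambda(\xi),\phi]$ a priori only has order $\leq n+1$, because $\psi^\lambda(\xi)$ raises PBW degree by one on $M$. (On the source side $[\fg, F^n\Gamma(\calD^\lambda_{G/P_I})]\subset F^n$ holds by the TDO axiom, but that property is not inherited by the ambient $\Hom$.) So $\gr L(M,M)$ is not a $G$-module in the sense your argument needs, and the comparison of graded multiplicities is not even well-posed as stated. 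Second, even granting some $\fg$-stable filtration, the identification of the associated graded of $L(M,M)$ with $\Gamma(\calO_{T^*(G/P_I)})$ \emph{as $G$-modules, for all $\lambda$}, is precisely the deep content of Soergel's result; calling it "a Kostant--Rallis-type calculation that bypasses antidominant-regularity assumptions" asserts the conclusion rather than proving it. The Frobenius-reciprocity reduction $\Hom_\fg(V, L(M,M)) \cong \Hom_{\fp_I}(V\otimes\bC_\mu, M|_{\fp_I})$ is correct, but computing the right-hand side for general $\lambda$ runs into the same difficulties (non-split filtrations on $V\otimes M$ by generalized Verma modules, possible higher $\operatorname{Ext}$'s, reducibility of $M$) that make the non-antidominant case hard in the first place. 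Soergel's proof does not proceed by a naive associated-graded count; it uses the relative enveloping algebra and a deformation/flatness argument in the parameter $\lambda$ to reduce to the generic (hence easy) case. Without supplying an argument at the level of that deformation step, your surjectivity proof does not close.
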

By the construction we have $a^\lambda = \psi^\lambda \circ \alpha^\lambda$. 
This equality and the above proposition indicate that $a^\lambda$ is an isomorphism if and only if $\psi^\lambda$ is an isomorphism. 
Thus for $\lambda \in (\fh/\fh_I)^*$ satisfying the assumption of Proposition \ref{asurj}, $\psi^\lambda$ is an isomorphism. 
For some good parabolic subgroups, a stronger statement holds. 

\begin{prop}[{\cite{BorBry82}}]
If the moment map $T^*G/P_I \to \fg^*$ is birational onto the image and the image is normal, then $\alpha^\lambda$ is an isomorphism. 
\end{prop}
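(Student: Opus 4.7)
The plan is to compare $\Gamma(\calD^\lambda_{G/P_I})$ and $L(M,M)$ for $M := M^{\fg}_{\fp_I}(\lambda - 2\rho_{\fn_I})$ as filtered $\fg$-bimodules, verify that $\alpha^\lambda$ is strict with respect to compatible filtrations, and identify the induced map on associated graded as a natural isomorphism with the coordinate ring of $T^*G/P_I$. Since the associated graded in each case turns out to be independent of $\lambda$, a uniform treatment across all $\lambda \in (\fh/\fh_I)^*$ is possible.

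First, I would equip $\Gamma(\calD^\lambda_{G/P_I})$ with the filtration $F^m := \Gamma(G/P_I, F^m\calD^\lambda_{G/P_I})$ inherited from the order filtration of the TDO. The short exact sequences $0 \to F^{m-1}\calD^\lambda_{G/P_I} \to F^m\calD^\lambda_{G/P_I} \to \Sym^m\Theta_{G/P_I} \to 0$, combined with the vanishing $H^i(G/P_I, \Sym^m\Theta_{G/P_I})=0$ for $i \geq 1$ (which is implicit in Proposition \ref{Dcohvan}, since $H^i(T^*G/P_I, \calO_{T^*G/P_I})=0$), yield by induction $\gr_F\Gamma(\calD^\lambda_{G/P_I}) \cong \Gamma(G/P_I, \Sym^\bullet\Theta_{G/P_I}) \cong \Gamma(T^*G/P_I, \calO)$.

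Second, I would filter $M$ by $F^kM := F^k\calU(\fg)\cdot v_0 = F^k\calU(\bar\fn_I)\cdot v_0$ (using PBW and the freeness of $M$ over $\calU(\bar\fn_I)$), so that $\gr M \cong \Sym^{\bullet}\bar\fn_I \otimes \bC_{\lambda-2\rho_{\fn_I}}$. This induces a filtration on $L(M,M)$ by $F^mL(M,M) := \{\varphi \mid \varphi(F^kM) \subset F^{k+m}M \text{ for all } k\}$, which is separated and exhaustive on $\fg$-finite elements. Combining classical invariant theory, Kostant's harmonic decomposition, and the Borho--Brylinski analysis of the moment map $T^*G/P_I \to \fg^*$, I would identify $\gr L(M,M)$ with $\Gamma(T^*G/P_I, \calO)$, again independently of $\lambda$.

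Third, I would verify that $\alpha^\lambda$ is strictly filtered: by Proposition \ref{fiberGVM} the fiber $\Gamma(\calD^\lambda_{G/P_I}(eP_I))$ is canonically identified with $M$, and left multiplication by $D \in F^m\Gamma(\calD^\lambda_{G/P_I})$ shifts the filtration on $M$ by at most $m$. Moreover, the principal symbol of $D$ acts on $\gr M$ by the action predicted by the identification $T^*_{eP_I}(G/P_I) \cong \fn_I^*$. Hence $\gr\alpha^\lambda$ is the natural identity on $\Gamma(T^*G/P_I, \calO)$. Since both filtrations are exhaustive, separated, and have finite-dimensional $\fh$-weight pieces (with only finitely many non-zero filtration jumps in each weight), an isomorphism on associated graded propagates to $\alpha^\lambda$ itself.

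The main obstacle is the identification of $\gr L(M,M)$ with $\Gamma(T^*G/P_I, \calO)$ in the second step. The image of $\calU(\fg) \to L(M,M)$ is generally a proper subalgebra (this is precisely why $a^\lambda$ in Proposition \ref{asurj} need not be surjective), so one must exhibit the extra $\fg$-finite endomorphisms explicitly. I would do this via Frobenius reciprocity to reduce, for each irreducible $G$-module $V_\tau$, the multiplicity computation in $L(M,M)^{[V_\tau]}$ to computing certain $\fp_I$-invariants in $\Sym^\bullet\bar\fn_I \otimes V_\tau^*$, and then match the resulting multiplicities with those in the Borel--Weil decomposition of $\Gamma(T^*G/P_I, \calO)$ furnished by Kostant's theorem.
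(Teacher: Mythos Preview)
The paper does not give its own proof of this proposition; it is merely cited from \cite{BorBry82}. Moreover, the statement almost certainly contains a typo: the conclusion should read ``$\psi^\lambda$ is an isomorphism'' rather than ``$\alpha^\lambda$ is an isomorphism''. Indeed, (i) the immediately preceding proposition (Soergel, Proposition \ref{alphasurj}) already asserts that $\alpha^\lambda$ is \emph{always} an isomorphism, so the present statement would be vacuous as written; (ii) the sentence following the proposition draws ``$\psi^\lambda$ is an isomorphism for full flag varieties'' as a special case; and (iii) Borho--Brylinski's theorem in \cite{BorBry82} is precisely about the surjectivity of $\calU(\fg) \to \Gamma(\calD^\lambda_{G/P_I})$, i.e.\ about $\psi^\lambda$.

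Your proposal has a genuine gap regardless of which map is meant. Step 2, the identification $\gr L(M,M) \cong \Gamma(T^*G/P_I, \calO)$, is the entire content of the matter and is not established by what you wrote. The references to ``Kostant's harmonic decomposition'' and ``Borho--Brylinski analysis of the moment map'' are gestures, not arguments; the multiplicity-matching you sketch via Frobenius reciprocity requires computing $\dim \Hom_\fg(V_\tau \otimes M, M)$, which a priori depends on $\lambda$ and is exactly as hard as the theorem itself. More tellingly, you never say where the birational/normal hypothesis on the moment map enters. If your steps 1--3 worked as written, they would prove that $\alpha^\lambda$ is an isomorphism with no hypothesis at all, i.e.\ you would be reproving Soergel's unconditional result and the moment-map assumption would be idle.

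For the intended statement about $\psi^\lambda$, Borho--Brylinski's argument is structurally different from yours and uses the hypothesis directly. One shows that $\gr\psi^\lambda$ factors as
\[
\Sym(\fg) \twoheadrightarrow \calO(\overline{\mathrm{im}\,\mu}) \longrightarrow \Gamma(T^*G/P_I, \calO_{T^*G/P_I}),
\]
where $\mu: T^*G/P_I \to \fg^*$ is the moment map. Birationality of $\mu$ onto its image together with normality of that image (and properness of $\mu$ over it) force the second arrow to be an isomorphism by a Stein factorization / Zariski main theorem argument; hence $\gr\psi^\lambda$ is surjective, and so is $\psi^\lambda$. Your step 1 is the correct filtration setup for this, but the comparison should be between $\Sym(\fg)$ and $\Gamma(\calO_{T^*G/P_I})$ via the moment map, not between $\Gamma(\calD^\lambda_{G/P_I})$ and $L(M,M)$.
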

As a special case of this proposition, we have that $\psi^{\lambda}$ is isomorphism for full flag varieties. 
In Lemma \ref{lemsurj}, we prove that if $\lambda \in (\fh/\fh_I)^*$ is regular the morphism $\psi^\lambda$ is an isomorphism. 

Finally we state a result due to Kitchen, which states that taking pullbacks to flag variety is compatible with global sections. 
We denote by $p_I$ the quotient morphism $G/B \to G/P_I$. We have a pullback functor $p^!_I:D^b(\calD^\lambda_{G/B}\modu) \to D^b(\calD^\lambda_{G/P_I}\modu)$. Since $\alpha^\lambda: {\rm U}^\lambda \to \Gamma(\calD^\lambda_{G/B})$ is an isomorphism, the homomorphism $\psi^\lambda$ induces a homomorphism $q_I: \Gamma (\calD^\lambda_{G/B}) \to \Gamma (\calD^\lambda_{G/P_I})$.

\begin{prop}[{\cite[Corollary 5.2]{Kit12}}]\label{propKit}
We have an isomorphism of functors
$\bR\Gamma(G/B, -)\circ p^{!}_I \cong q^*_I \circ \bR\Gamma(G/P_I, -):D^{b}(\calD^{\lambda}_{G/P_I}\modu) \to D^b(\Gamma (\calD^\lambda_{G/B})\modu)$. 
\end{prop}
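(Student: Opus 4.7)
The plan is to establish the claimed isomorphism by a direct cohomological computation using the Leray spectral sequence and the projection formula, and then to verify compatibility of the module structures.

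The morphism $p_I: G/B \to G/P_I$ is smooth and projective, with fibers isomorphic to the partial flag variety $P_I/B$ of the Levi subgroup $L_I$. Since $p_I$ is smooth, the paper records that the $\calD$-module pullback $p_I^!\calM$ has the same underlying $\calO_{G/B}$-module as the classical pullback $p_I^*\calM$. Hence, forgetting the twisted $\calD$-module structure, the Leray spectral sequence and the projection formula yield
\[
\bR\Gamma(G/B, p_I^!\calM) \cong \bR\Gamma(G/P_I, \bR p_{I*}\, p_I^*\calM) \cong \bR\Gamma(G/P_I,\, \calM \otimes^{\bL}_{\calO_{G/P_I}} \bR p_{I*}\calO_{G/B}).
\]
The fiber $P_I/B$ is a partial flag variety of $L_I$, so its structure sheaf has cohomology concentrated in degree zero with value $\bC$ (e.g.\ by Borel-Weil-Bott applied to the trivial character). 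By flat base change this gives $\bR p_{I*}\calO_{G/B} \cong \calO_{G/P_I}$, and thus
\[
\bR\Gamma(G/B, p_I^!\calM) \cong \bR\Gamma(G/P_I, \calM)
\]
as complexes of vector spaces, naturally in $\calM$.

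It remains to verify that this natural isomorphism is $\Gamma(\calD^\lambda_{G/B})$-linear, where the right-hand side is viewed as a $\Gamma(\calD^\lambda_{G/B})$-module via pullback along $q_I$. Since $\psi^\lambda: \calU(\fg) \to \Gamma(\calD^\lambda_{G/B})$ is surjective by Beilinson-Bernstein, it suffices to check $\calU(\fg)$-linearity. The $G$-equivariance of the TDO pullback isomorphism $\calD^\lambda_{G/B} \cong p_I^\#\calD^\lambda_{G/P_I}$ implies that the $\fg$-action on $p_I^!\calM$ is obtained from the $\fg$-action on $\calM$ via the canonical map $p_I^{-1}\calD^\lambda_{G/P_I} \to p_I^\#\calD^\lambda_{G/P_I}$, so the two resulting $\calU(\fg)$-actions on the common underlying vector space coincide. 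Since $q_I$ is defined so that the composition $\calU(\fg) \to \Gamma(\calD^\lambda_{G/B}) \xrightarrow{q_I} \Gamma(\calD^\lambda_{G/P_I})$ equals $\psi^\lambda_I$, this is precisely the desired compatibility.

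The main obstacle will be the bookkeeping for the module-structure compatibility: the cohomological computation is routine given the vanishing of higher direct images of $\calO_{G/B}$ under $p_I$, but tracking the $G$-equivariant data through the identifications $\calD^\lambda_{G/B} \cong p_I^\#\calD^\lambda_{G/P_I}$ and $\Gamma(\calD^\lambda_{G/B}) \cong {\rm U}^\lambda$ in order to see that the two $\Gamma(\calD^\lambda_{G/B})$-module structures genuinely agree requires care. Naturality in $\calM$ then follows from the naturality of each of the steps used.
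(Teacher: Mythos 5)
This statement is not proved in the paper at all: it is quoted verbatim from Kitchen \cite[Corollary 5.2]{Kit12} and used as a black box, so there is no argument in the source to compare against. Your cohomological computation is correct and is essentially the expected route: $p_I^!\calM$ and $p_I^*\calM$ agree as $\calO$-modules because $p_I$ is smooth, the Leray spectral sequence and projection formula reduce everything to identifying $\bR p_{I*}\calO_{G/B}$ with $\calO_{G/P_I}$, and this identification holds because each fiber $P_I/B$ is a complete flag variety of $L_I$, whose structure sheaf has cohomology $\bC$ concentrated in degree $0$, together with flat base change and $G$-homogeneity to trivialize $R^0p_{I*}\calO_{G/B}$.

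There is, however, a misstep in your verification of module-structure compatibility. You invoke a ``canonical map $p_I^{-1}\calD^\lambda_{G/P_I}\to p_I^{\#}\calD^\lambda_{G/P_I}$.'' No such homomorphism of sheaves of rings exists: $p_I^{\#}\calD^\lambda_{G/P_I}$ is defined as $\calE\!\textit{nd}^{\textit{fin}}_{p_I^{-1}\calD^\lambda_{G/P_I}}(p_I^{*}\calD^\lambda_{G/P_I})$, where $p_I^{-1}\calD^\lambda_{G/P_I}$ acts on $p_I^{*}\calD^\lambda_{G/P_I}=\calO_{G/B}\otimes_{p_I^{-1}\calO_{G/P_I}}p_I^{-1}\calD^\lambda_{G/P_I}$ on the right; left multiplication by $p_I^{-1}\calD^\lambda_{G/P_I}$ does not even make sense, since $\calO_{G/B}$ is not a $p_I^{-1}\calD^\lambda_{G/P_I}$-module. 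The correct mechanism is the one you gesture at in the preceding clause: $p_I$ is $G$-equivariant, $\calD^\lambda_{G/B}\cong p_I^{\#}\calD^\lambda_{G/P_I}$ is an isomorphism of $G$-equivariant TDOs, and the $\calU(\fg)$-action on global sections of a twisted D-module on a $G$-homogeneous space is given by the infinitesimal $G$-action; hence the unit $\calM\to\bR p_{I*}p_I^{!}\calM$, the identification $\bR p_{I*}\calO_{G/B}\cong\calO_{G/P_I}$, and the ensuing isomorphism of global section complexes are all $\fg$-equivariant by naturality of $G$-equivariant structures, not by any map of sheaves $p_I^{-1}\calD^\lambda_{G/P_I}\to p_I^{\#}\calD^\lambda_{G/P_I}$. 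Once this is fixed, your conclusion via surjectivity of $\psi^\lambda$ and the defining property $q_I\circ\psi^\lambda_{G/B}=\psi^\lambda_I$ of $q_I$ is sound.
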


\[
\xymatrix{
D^{b}(\calD^{\lambda}_{G/P_I}\modu) \ar[rr]^-{\bR\Gamma(G/P_I, -)} \ar[d]_{p^!_I} \ar@{}[rrd]|{\circlearrowright} &  & D^b(\Gamma (\calD^\lambda_{G/P_I})\modu) \ar[d]^{q^*_I} \\
D^{b}(\calD^{\lambda}_{G/B}\modu) \ar[rr]_-{\bR\Gamma(G/B, -)} & & D^b(\Gamma (\calD^\lambda_{G/B})\modu)
}
\]

\section{Radon transforms for partial flag varieties}\label{sectradon}

We define an affine action of the Weyl group on $\fh^*$, which appears many times in this paper. 
 
\begin{defi}\label{staraction}
For $w \in W$ and $\lambda \in \fh^*$, we define $w*\lambda$ by $w*\lambda:= w(\lambda-\rho) +\rho$.
\end{defi}
Note that this action differs from the dot action which is defined in \cite[\S 2.3]{Jan83}.

Let $I \subset \Pi $. 
In this paper we consider only $w \in W$ satisfying $I=wJ$ for some $J \subset \Pi$. 
In this case $p^w_1$ and $p^w_2$ are affine space fibrations. This assumption has a following drawback. 

\begin{lem}\label{lemtwist}
Let $I, J \subset \Pi$ and $w \in W$ satisfy $wJ=I$. 
\begin{enumerate}
\item The pullback $p^{w*}_1: H^*(G/P_J, \bC) \to H^*(\bO_w ,\bC)$ and $p^{w*}_2: H^*(G/P_I, \bC) \to H^*(\bO_w ,\bC)$ are isomorphisms. 
\item Under the identification $H^2(G/P_I, \bC) \cong (\fh/\fh_I)^*$ and $H^2(G/P_J, \bC) \cong (\fh/\fh_J)^*$, the linear map $(p^{w*}_1)^{-1} \circ p^{w*}_2$ coincides with $w^{-1}$. 
\end{enumerate}
\end{lem}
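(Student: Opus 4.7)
My plan is to reduce both parts to statements about $G$-equivariant line bundles on $\bO_w$ and then invoke Proposition \ref{equivO} to translate everything into the language of characters of the stabilizer of a base point.

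For part (1), I would simply use that under Condition $\conda$ the projections $p^w_1$ and $p^w_2$ are affine space fibrations, as recalled just before the lemma. Since affine spaces are contractible in the analytic topology, the Leray spectral sequence for each $p^w_i$ with coefficients in $\bC$ collapses ($R^k p^w_{i,*}\bC$ vanishes for $k > 0$ and $p^w_{i,*}\bC \cong \bC$), so the pullbacks $p^{w*}_i$ are isomorphisms on singular cohomology.

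For part (2), I would first recall that the identification $H^2(G/P_\bullet, \bC) \cong (\fh/\fh_\bullet)^*$ is realized by the first Chern class map $\lambda \mapsto c_1(\calL^\lambda_{G/P_\bullet})$ on characters of $P_\bullet$, extended $\bC$-linearly. Thus by naturality of $c_1$, the claim reduces to showing that for every $\lambda \in X^*(P_I)$,
\[
p^{w*}_2 \calL^\lambda_{G/P_I} \cong p^{w*}_1 \calL^{w^{-1}\lambda}_{G/P_J}
\]
as $G$-equivariant invertible sheaves on $\bO_w \cong G/H_0$, where $H_0 := wP_Jw^{-1} \cap P_I$ is the stabilizer of the base point $(w,e) \in \bO_w$. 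By Proposition \ref{equivO} this is the same as matching the two corresponding characters of $H_0$.

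I would then compute the characters directly. The map $p^w_2$ sends the base point to $eP_I$, so the first pullback corresponds simply to the restriction $\lambda|_{H_0}$, which on $\fh \subset \fh_0$ equals $\lambda$. The map $p^w_1$ sends the base point to $wP_J$, and a direct computation of the $H_0$-action on the fiber of $\calL^\mu_{G/P_J}$ at $wP_J$, using the presentation $\calL^\mu_{G/P_J} = G \times^{P_J} \bC_\mu$, yields the character $h \mapsto \mu(w^{-1}hw)$, which on $\fh$ coincides with $w \cdot d\mu$ in the standard convention $(w\nu)(X) = \nu(\Ad(w^{-1})X)$. Setting $\mu = w^{-1}\lambda$ then forces the two characters to agree. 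The only real subtlety is keeping Weyl group conventions straight and remembering that $wJ = I$ ensures $w^{-1}$ maps $(\fh/\fh_I)^*$ into $(\fh/\fh_J)^*$; once this bookkeeping is in place the argument is purely formal.
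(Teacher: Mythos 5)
Your proposal is correct and follows essentially the same route as the paper: part (1) via contractibility of the affine-space fibers of $p^w_1$, $p^w_2$, and part (2) by identifying $H^2 \cong (\fh/\fh_\bullet)^*$ through first Chern classes of $G$-equivariant line bundles and then matching characters of the stabilizer of a base point of $\bO_w$, which is exactly how the paper obtains $p^{w*}_1\calL^\lambda_{G/P_J}\cong\calL^{w\lambda}_{\bO_w}\cong p^{w*}_2\calL^{w\lambda}_{G/P_I}$. You merely spell out the $\Ad(w^{-1})$-twist on the stabilizer $wP_Jw^{-1}\cap P_I$ more explicitly than the paper does.
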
 

\begin{proof}
1. This follows from the fact that $p^w_1$ and $p^w_2$ are affine space fibration and hence have contractible fibers. 

\noindent 2. Pick $\lambda \in X^*(P_J) \subset (\fh/\fh_J)^*$. 
Then we have an isomorphism $p^{w*}_1\calL^\lambda_{G/P_J} \cong \calL^{w\lambda}_{\bO_w} \cong p^{w*}_2\calL^{w\lambda}_{G/P_I} $. Since $(\fh/\fh_J)^*$ is generated by $X^*(P_J)$ as a $\bC$-vector space, we have an equality $(p^{w*}_1)^{-1} \circ p^{w*}_2 = w^{-1}$. 
\end{proof}

We consider integral transforms arising from $G$-orbits $\bO_w$ of $G/P_J \times G/P_I$ for $w$ satisfying Condition $\conda$. 
\begin{defi}(Intertwining functor or Radon transform)\label{defradon}~\\
For each $w \in W$ satisfying $wJ=I$ and each $\mu \in X^*(P_I)$, we define the \emph{intertwining functor} or the \emph{Radon transform} $R^{w, \mu}_{?}$ for $?=!$ or $+$ associated to $w$ and $\mu$ by
\[
R^{w, \mu}_?(-):= p^w_{1?}(\det\Theta_{p^w_1} \totimes \calL^{\mu}_{\bO_w} \totimes p^{w!}_2 ( - )) : D^b_{\textit hol}(\calD^{\lambda}_{G/P_I}\modu) \to D^b_{\textit hol}(\calD^{w^{-1}*\lambda+w^{-1}\mu}_{G/P_J}\modu). 
\]
\end{defi}
The functor $R^{w,\mu}_!$ is also defined on the category $D^b(\calD^{\lambda}_{G/P_I}\modu)$. 
If $\mu=0$, we omit $\mu$ and denote by $R^{w}_?$. 

The previous lemma and the isomorphism $\det\Theta_{p^w_1} \cong \calL^{-\rho+w\rho}_{\bO_w}$ explain the twist in the codomain of the intertwining functor. 

Intertwining functors are given by kernels on the product $G/P_J \times G/P_I$. 
Let $j_w: \bO_w \hookrightarrow G/P_J \times G/P_I$ be the inclusion. 
We have the following description of the intertwining functor using a kernel. 

\begin{lem}\label{intker}
Let $\calM \in D^b(\calD^\lambda_{G/P_I}\modu)$. We have the following isomorphism for $?= !, *$. 
 \[R^w_?(\calM) \cong p_{1+}( j_{w?}(\det\Theta_{p_1} \totimes \calL^{\mu}_{\bO_w}) \totimes p^{!}_2(\calM))\]
\end{lem}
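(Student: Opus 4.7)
The plan is to factor $p^w_i = p_i \circ j_w$ for $i = 1, 2$, push the pullback $p^{w!}_2$ across $j_w$ via the projection formula, and then absorb $p_{1?} \circ j_{w?}$ into $p_{1+} \circ j_{w?}$ using properness of $p_1$.

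First, functoriality of the D-module operations along the factorizations $p^w_i = p_i \circ j_w$ gives $p^w_{1?} \cong p_{1?} \circ j_{w?}$ (for $? \in \{!, +\}$) and $p^{w!}_2 \cong j^!_w \circ p^!_2$. The projection $p_1 : G/P_J \times G/P_I \to G/P_J$ is projective because its fiber $G/P_I$ is, so Proposition \ref{nc} (ii) yields $p_{1!} \cong p_{1+}$ on holonomic complexes. Thus in both cases $? \in \{!, +\}$,
\[
p^w_{1?} \cong p_{1+} \circ j_{w?}.
\]

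Next I apply the projection formula to the inclusion $j_w$. For $? = +$ this is Proposition \ref{pullmon} (ii): with $\calM' := \det\Theta_{p^w_1} \totimes \calL^{\mu}_{\bO_w}$ on $\bO_w$ and $\calN := p^!_2\calM$ on $G/P_J \times G/P_I$, using $j^!_w(\calN) \cong p^{w!}_2\calM$, one obtains
\[
j_{w+}\bigl(\calM' \totimes p^{w!}_2\calM\bigr) \cong j_{w+}(\calM') \totimes p^!_2\calM.
\]
The $? = !$ analogue follows by dualizing: apply $\bD$, use $\bD \circ \bD \cong \id$ (Proposition \ref{doubledual}), the identity $j_{w!} = \bD \circ j_{w+} \circ \bD$, and the compatibility of $\bD$ with $\totimes$ for non-characteristic pairs (\cite[Proposition 1.2.2]{KasTan96}). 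The required non-characteristic hypothesis is automatic because $p^!_2\calM$ is obtained by pulling back along the smooth morphism $p_2$ (cf.\ Proposition \ref{nc} (i)).

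Substituting $p^{w!}_2\calM \cong j^!_w p^!_2 \calM$ into the right-hand side of Definition \ref{defradon}, applying the projection formula, and then $p_{1+}$, yields the claimed kernel formula. I expect the only real obstacle to be the duality bookkeeping needed for the $? = !$ version of the projection formula; the $? = +$ case is an immediate consequence of the statements recalled in Subsection \ref{sectTDO}.
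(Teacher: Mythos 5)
Your argument matches the paper's: factor $p^w_i = p_i \circ j_w$, apply the projection formula across $j_w$, and use properness of $p_1$ to replace $p_{1!}$ by $p_{1+}$; indeed the paper's own proof is even terser, simply invoking Proposition~\ref{pullmon}(ii) and writing the two-line computation for both values of $?$. One small imprecision: the non-characteristic hypothesis you need in the $?=!$ step is automatic not because $p_2$ alone is smooth, but because $\det\Theta_{p^w_1}\totimes\calL^\mu_{\bO_w}$ is $\calO$-coherent (its characteristic variety is the zero section) and because $p^w_2 = p_2\circ j_w$ is smooth, so that $p^!_2\calM$ is non-characteristic with respect to $j_w$ and $j^!_w$ agrees with $j^+_w$ on it.
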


\begin{proof}
This follows immediately from the projection formula (Proposition \ref{pullmon} (ii)). 
\begin{align*}
R^w_?(\calM)  &\cong p^w_{1?}(\det\Theta_{p^w_1} \totimes \calL^{\mu}_{\bO_w} \totimes p^{w!}_2 ( \calM )) \\
 & \cong p_{1+}(j_{w?}(\det\Theta_{p^w_1} \totimes \calL^{\mu}_{\bO_w} ) \totimes p^{!}_2 (\calM))
\end{align*}
Here $j_{w?}$ is a functor $D^b(\calD^{-\rho+w\rho+\mu}_{\bO_w}\modu) \to D^b(p^{\#}_1\calD^{w^{-1}\lambda}_{G/P_J} \# p^\#_2\calD^{-\lambda -\rho +w\rho +\mu }_{G/P_I}\modu)$. 
Note that the both of $p_1$ and $p_2$ are smooth and proper morphisms. 
\end{proof}

\begin{defi}
Let $w \in W$ and $\lambda \in (\fh/\fh_I)^*$. 
We define the kernel of the intertwining functor by
\[K^{w,\mu}_?  := j_{w?}(\det\Theta_{p^w_1} \totimes \calL^{\mu}_{\bO_w})\in D^b(p^{\#}_1\calD^{w^{-1}\lambda}_{G/P_J} \# p^\#_2\calD^{-\lambda -\rho +w\rho +\mu }_{G/P_I}\modu)\]
 for $?=!, *$.
\end{defi}

For the composition of intertwining functors, the following holds. 
\begin{prop}~\\ \label{compose}
Let $I,J,K \subset \Pi$, $\mu_1 \in X^*(P_I), \mu_2 \in X^*(P_J)$ and $w_1, w_2 \in W$ satisfy $w_2K=J$, $w_1J=I$ and $\ell(w_1w_2)=\ell(w_1)+\ell(w_2)$. Then for $?=+$ and $?=!$, we have
\begin{align*}
R^{w_1w_2, \mu_1 + w_1\mu_2}_? \cong R^{w_2, \mu_2}_? \circ R^{w_1, \mu_1}_?.
\end{align*}
\end{prop}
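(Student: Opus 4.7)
The plan is to recognize the composition $R^{w_2,\mu_2}_?\circ R^{w_1,\mu_1}_?$ as a single integral transform whose kernel is a convolution on the triple product, and then to identify that kernel geometrically via a parabolic Bruhat identity under the length hypothesis.

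Let $X := G/P_K \times G/P_J \times G/P_I$, with projections $\pi_{\alpha\beta}$ to pairs and $\pi_\alpha$ to single factors. Using Lemma \ref{intker} to express each $R^{w_i,\mu_i}_?$ as an integral transform with kernel $K^{w_i,\mu_i}_?$, applying base change (Proposition \ref{bc}) for the cartesian square
\[
\xymatrix{
X \ar[r]^-{\pi_{JI}} \ar[d]_{\pi_{KJ}} & G/P_J \times G/P_I \ar[d]^{\pi^{JI}_1} \\
G/P_K \times G/P_J \ar[r]^-{\pi^{KJ}_2} & G/P_J
}
\]
followed by the projection formula (Proposition \ref{pullmon}) yields
\[R^{w_2,\mu_2}_?\!\circ R^{w_1,\mu_1}_?(\calM) \cong \pi^{KI}_{1+}\!\bigl(\calK \totimes \pi^{KI!}_2\calM\bigr), \qquad \calK := \pi_{KI+}\!\bigl(\pi_{KJ}^!K^{w_2,\mu_2}_? \totimes \pi_{JI}^!K^{w_1,\mu_1}_?\bigr).\]
By Lemma \ref{intker} it then suffices to prove $\calK \cong K^{w_1w_2,\,\mu_1+w_1\mu_2}_?$.

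The geometric heart of the argument is the identification $Z := \bO_{w_2} \times_{G/P_J} \bO_{w_1} \cong \bO_{w_1w_2}$ as $G$-subvarieties of $X$, specializing to a closed immersion $Z \hookrightarrow G/P_K \times G/P_I$ under $\pi_{KI}$. Smoothness of the projections $p^{w_2}_2, p^{w_1}_1$ guarantees that $Z$ is smooth of dimension $\dim G/P_K + \ell(w_1) + \ell(w_2)$, which by the length assumption equals $\dim \bO_{w_1w_2}$; $G$-equivariance together with a check at the basepoint $(w_2, e, w_1^{-1})$ places the image in $\bO_{w_1w_2}$; and the standard parabolic Bruhat identity $P_Iw_1P_J \cdot P_Jw_2P_K = P_Iw_1w_2P_K$, which holds precisely when $\ell(w_1w_2) = \ell(w_1) + \ell(w_2)$, shows that $Z \to \bO_{w_1w_2}$ is bijective on closed points and hence an isomorphism. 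Applying base change once more along the cartesian squares that express $\pi_{JI}^{-1}(\bO_{w_1})$ and $\pi_{KJ}^{-1}(\bO_{w_2})$ as pullbacks of $j_{w_i}$, and using that $\pi_{JI}$, $\pi_{KJ}$ are smooth, one rewrites $\pi_{\bullet\bullet}^!K^{w_i,\mu_i}_?$ as $\tilde j_{i?}$ of the $!$-pullback of the twist; their tensor product lives on $Z \cong \bO_{w_1w_2}$, and under this identification $\pi_{KI}|_Z$ becomes $j_{w_1w_2}$, so $\calK$ reduces to the corresponding twist on $\bO_{w_1w_2}$ pushed forward by $j_{w_1w_2,?}$.

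It then remains to verify an identity of $G$-equivariant line bundles on $\bO_{w_1w_2}$, namely
\[\tilde\pi_{JI}^*(\det\Theta_{p^{w_1}_1} \otimes \calL^{\mu_1}_{\bO_{w_1}}) \otimes \tilde\pi_{KJ}^*(\det\Theta_{p^{w_2}_1} \otimes \calL^{\mu_2}_{\bO_{w_2}}) \cong \det\Theta_{p^{w_1w_2}_1} \otimes \calL^{\mu_1+w_1\mu_2}_{\bO_{w_1w_2}}.\]
The determinant factors match via the short exact sequence of relative tangent sheaves for the tower of smooth affine-space fibrations $\bO_{w_1w_2} \cong Z \to \bO_{w_2} \to G/P_K$; the line bundle factors are compared by Proposition \ref{equivO} at the basepoint, where pulling $\calL^{\mu_1}_{\bO_{w_1}}$ back preserves the character $\mu_1$ while pulling $\calL^{\mu_2}_{\bO_{w_2}}$ back along $\tilde\pi_{KJ}$ introduces a Weyl twist by $w_1$, coming from the fact that $p^{w_1}_1\colon g \mapsto gw_1$ relates $X^*(P_J)$ to $X^*(P_I)$ by $\nu \mapsto w_1\nu$. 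This yields $\calK \cong K^{w_1w_2,\,\mu_1+w_1\mu_2}_?$. The $?=+$ case follows either by rerunning the same argument with $j_{w+}$ in place of $j_{w!}$ (formally identical since the relevant maps are smooth and the pushforward is along affine-space fibrations) or by conjugating with the duality functor $\bD$ using Propositions \ref{doubledual} and \ref{nc}. The main obstacle in this plan is the careful bookkeeping of the twist character $\mu_1 + w_1\mu_2$, whose $w_1$ factor is dictated entirely by the right-multiplication-by-$w$ feature of $p^w_1$; once this is untangled the remainder is a formal application of base change and the projection formula.
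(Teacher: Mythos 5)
Your proof follows essentially the same route as the paper's: reduce the composition to a convolution of kernels on the triple product $G/P_K\times G/P_J\times G/P_I$, identify $\bO_{w_2}\times_{G/P_J}\bO_{w_1}\cong\bO_{w_1w_2}$ using the length additivity, and match the twist line bundles on $\bO_{w_1w_2}$. The only difference is that you expand the cartesian-square claim — which the paper dispatches in one line citing $\ell(w_1w_2)=\ell(w_1)+\ell(w_2)$ — into an explicit dimension count, basepoint check, and Bruhat product identity, and you verify the determinant factors via a short exact sequence of relative tangent sheaves rather than by invoking $\det\Theta_{p^w_1}\cong\calL^{-\rho+w\rho}_{\bO_w}$ directly; both are sound.
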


\begin{proof}
Let $q_1 :\bO_{w_1w_2} \to \bO_{w_2} $ and $q_2: \bO_{w_1w_2} \to \bO_{w_1}$ be natural morphisms. 
We have the following diagram. 
\[
\xymatrix{
          &         &  \bO_{w_1w_2}  \ar[dl]^{q_1} \ar[dr]_{q_2} \ar@/_25pt/[ddll]_{p^{w_1w_2}_1} \ar@/^25pt/[ddrr]^{p^{w_1w_2}_2} \ar@{}[dd]|{\rotatebox{45}{$\square$}} &    &    \\
          &  \bO_{w_2} \ar[dl]_{p^{w_2}_1} \ar[dr]^{p^{w_2}_2} &   & \bO_{w_1} \ar[dl]_{p^{w_1}_1} \ar[dr]^{p^{w_1}_2} & \\
  G/P_K  &   &  G/P_J  &   &   G/P_I \\
}
\]
The square is cartesian because of the equality $\ell(w_1w_2)=\ell(w_1)+\ell(w_2)$. 

We have $\det \Theta_{p^{w_1}_1} \cong \calL^{-\rho+w_1\rho}_{\bO_{w_1}}$ and $\det \Theta_{p^{w_2}_1} \cong \calL^{-\rho+w_2\rho}_{\bO_{w_2}}$. From this we obtain 
\[q^{*}_1(\det \Theta_{p^{w_2}_1} \otimes \calL^{\mu_2}_{\bO_{w_2}}) \otimes q^{*}_2(\det \Theta_{p^{w_1}_1} \otimes \calL^{\mu_1}_{\bO_{w_1}}) 
\cong \calL^{-\rho + w_1w_2\rho + w_1\mu_2 +\mu_1}_{\bO_{w_1w_2}} 
\cong \det \Theta_{p^{w_1w_2}_1} \otimes \calL^{w_1\mu_2 +\mu_1}_{\bO_{w_1w_2}}, 
\]
which by base change gives an isomorphism $K^{w_2, \mu_2}_? \ast K^{w_1, \mu_1}_? \cong K^{w_1w_2, \mu_1 + w_1\mu_2}_? $.
This isomorphism gives $R^{w_1w_2, \mu_1 + w_1\mu_2}_? \cong R^{w_2, \mu_2}_? \circ R^{w_1, \mu_1}_?$. Here we denote by $p_{12}, p_{23}, p_{13}$ the projection from $G/P_K \times G/P_J \times G/P_I$ to the product of two of the three factors and define the convolution of kernels by $K^{w_2, \mu_2}_? \ast K^{w_1, \mu_1}_? := p_{13+}(p^!_{12}(K^{w_2, \mu_2}_?) \totimes p^!_{23}(K^{w_1,\mu_1}_?))$
\end{proof}
This proposition and Proposition \ref{BH} due to Brink and Howlett allow to study the intertwining functor by the reduction to the maximal parabolic cases.

Intertwining functors for $w$ satisfying $wJ=I$ is an equivalence of categories. This is one of the main result in this paper. 

\begin{thm}\label{thmequiv}
The intertwining functors $R^{w, \mu}_+$ and $R^{w^{-1}, -w^{-1}\mu}_!$ are mutually inverse equivalences. 
\end{thm}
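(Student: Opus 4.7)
The approach is to combine a length reduction via the Brink--Howlett decomposition (Proposition \ref{BH}) with a direct computation of the convolution of kernels in the elementary case $w=v[\alpha,I]$.

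First, by Proposition \ref{BH} one writes $w=v[\alpha_1,I_1]\cdots v[\alpha_r,I_r]$ with $\ell$ additive, and by Proposition \ref{compose} both $R^{w,\mu}_+$ and $R^{w^{-1},-w^{-1}\mu}_!$ decompose as iterated compositions of intertwining functors for the individual factors $v[\alpha_i,I_i]$ and their inverses, for an appropriate splitting $\mu=\sum_i v[\alpha_1,I_1]\cdots v[\alpha_{i-1},I_{i-1}]\mu_i$. A direct computation with the $*$-action shows that the induced splittings of $\mu$ and of $-w^{-1}\mu$ interlock so that $R^{w^{-1},-w^{-1}\mu}_!\circ R^{w,\mu}_+$ telescopes into adjacent pairs $R^{v[\alpha_i,I_i]^{-1},-v[\alpha_i,I_i]^{-1}\mu_i}_!\circ R^{v[\alpha_i,I_i],\mu_i}_+$. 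Thus the theorem follows once it is established in each elementary case $w=v[\alpha,I]$.

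For the elementary case I would show $R^{w^{-1},-w^{-1}\mu}_!\circ R^{w,\mu}_+\cong\id$ by computing the convolution kernel $K^{w^{-1},-w^{-1}\mu}_!\ast K^{w,\mu}_+$ on $G/P_I\times G/P_I$. By the base change isomorphism (Proposition \ref{bc}) and the projection formula (Proposition \ref{pullmon}), this convolution is the pushforward from the fiber product $\bO_{w^{-1}}\times_{G/P_J}\bO_w$ of a twisted invertible sheaf assembled from $\det\Theta_{p^w_1}$, $\det\Theta_{p^{w^{-1}}_1}$, $\calL^\mu_{\bO_w}$ and $\calL^{-w^{-1}\mu}_{\bO_{w^{-1}}}$. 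The key geometric input is that $w=v[\alpha,I]$ has a representative in the Levi $L_{I\cup\{\alpha\}}$, so $\bO_w$ is an open dense $G$-orbit in $G/P_J\times_{G/P_{I\cup\{\alpha\}}}G/P_I$ (as a dimension count confirms), and similarly for $\bO_{w^{-1}}$. Consequently $\bO_{w^{-1}}\times_{G/P_J}\bO_w$ is open in $G/P_I\times_{G/P_{I\cup\{\alpha\}}}G/P_I$, which carries the diagonal $G/P_I$ as a closed subvariety.

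Analyzing the open--closed distinguished triangle associated with this inclusion, the statement reduces to verifying that the twist $\det\Theta_{p^w_1}\totimes\calL^\mu_{\bO_w}$ is tuned precisely so that the contribution from the open complement of the diagonal vanishes while the contribution from the diagonal collapses to the structure sheaf $\Delta_+\calO_{G/P_I}$ of the diagonal, which is the kernel of $\id$. This is the main obstacle of the proof, and it reduces to a cohomology vanishing statement along the projective-space fibers of $G/P_I\to G/P_{I\cup\{\alpha\}}$, in the spirit of the special case treated by Marastoni \cite[Theorem 1.1]{Mar13}. The inverse composition $R^{w,\mu}_+\circ R^{w^{-1},-w^{-1}\mu}_!\cong\id$ then follows either by a symmetric argument or, more economically, by applying the duality functor $\bD$ of Proposition \ref{doubledual} to exchange the roles of $+$ and $!$.
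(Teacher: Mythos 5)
Your reduction to the elementary case $w=v[\alpha,I]$ via Proposition \ref{BH} and the composition law \ref{compose} is correct and is also how the paper proceeds; the telescoping of the $\mu$-decomposition is a nice way to make that reduction explicit. Your observation that $\bO_w$ sits as an open dense $G$-orbit inside the fiber product $G/P_J\times_{G/P_{I\cup\{\alpha\}}}G/P_I$, and that the convolution support therefore lives in $G/P_I\times_{G/P_{I\cup\{\alpha\}}}G/P_I$, is geometrically correct and a cleaner way of organizing the elementary case than the paper's, which first proves a lemma in the maximal-parabolic setting ($\Pi\setminus I=\{\alpha\}$, so the fiber product \emph{is} the whole product) and then reduces the general $v[\alpha,J]$-case to it by restricting to the open $\bar P_{I\cup\{\alpha'\}}$-charts $L_{I\cup\{\alpha'\}}/P^{I\cup\{\alpha'\}}_I\times\bar U_{I\cup\{\alpha'\}}$ and gluing.

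However, the proposal has a genuine gap at exactly the place it declares to be ``the main obstacle.'' You need two separate statements: (a) that the convolution kernel $K^{w^{-1},-w^{-1}\mu}_!\ast K^{w,\mu}_+$ vanishes off the diagonal, and (b) that its $\Delta^!$-restriction to the diagonal is $\calO_{G/P_I}[\dim G/P_I]$. Neither is a ``cohomology vanishing statement along the projective-space fibers of $G/P_I\to G/P_{I\cup\{\alpha\}}$,'' and indeed those fibers are maximal-parabolic flag varieties $L_{I\cup\{\alpha\}}/P^{I\cup\{\alpha\}}_I$ of the Levi, not projective spaces in general. The paper's proof of (a) is not a Bott-type computation: after base change and projection formula the question reduces to showing $\bR\Gamma\bigl(j_{1!}\bC_{U_1\cap U_2}\bigr)\cong 0$ for a pair of shifted open Bruhat cells $U_1,U_2$ in the target flag variety, and this is proved by a Riemann--Hilbert argument together with the observation that the closed complement $U_1\setminus(U_1\cap U_2)$, being of the form $C\cap gY$, is contracted to a point by a $\bG_m$-action through a dominant regular coweight of the maximal torus. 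This contractibility lemma is the crux, and your proposal neither states nor proves any substitute for it. Likewise (b) requires a direct computation identifying $\Delta^!$ of the convolution with the structure sheaf, which amounts to checking that the twists $\det\Theta_{p^{v^{-1}}_1}\totimes\calL^{-v^{-1}\mu}_{\bO_{v^{-1}}}$ and $(\tilde\tau|_{\bO_{v^{-1}}})^*\bigl(\det\Theta_{p^v_1}\totimes\calL^\mu_{\bO_v}\bigr)$ are mutually inverse and that the remaining pushforward along an affine-space fibration is the identity; this is a separate verification, not subsumed in a ``vanishing.'' Finally, a minor caution: the remark that the reverse composition follows by applying $\bD$ ``to exchange $+$ and $!$'' is not immediate, because $\bD$ also conjugates the twisted ring $\calD^\lambda$ to $\calD^{\lambda,-\#}$ (so it changes the parameter, not just the decoration); the safe statement, and the one the paper uses, is that the reverse composition is proved by the symmetric argument with the roles of $I$ and $J$ interchanged.
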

This theorem is a generalization of the result of Marastoni \cite[Theorem 1.1]{Mar13}.

We prove this theorem in two steps. First we prove this theorem for maximal parabolic case, i.e., the case when set $\Pi \setminus I$ consists of the unique element $\alpha$. 
In this case, $w$ satisfying Condition $\conda$ is the identity of $W$ or $w=w^{I}_0 w^{\Pi}_0$. We set $v:= w^{I}_0 w^{\Pi}_0$ and $J:=v^{-1}I \subset \Pi$. The $G$-orbit $\bO_v$ is open in $G/P_J \times G/P_I$. 

\begin{lem}\label{lemequiv}
Assume that $G$ is a simple algebraic group and $\Pi \setminus I =\{\alpha\}$. Let $v:=w^{I}_0 w^{\Pi}_0$ and $J:=v^{-1}I$. Let $\lambda \in (\fh/\fh_I)^*$ and $\mu \in X^*(P_I)$

Then the intertwining functors $R^{v,\mu}_+$ and $R^{v^{-1}, -v^{-1}\mu}_!$ are mutually inverse equivalences. 

\end{lem}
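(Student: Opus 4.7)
The strategy is to show that both $R^{v^{-1},-v^{-1}\mu}_! \circ R^{v,\mu}_+$ and $R^{v,\mu}_+ \circ R^{v^{-1},-v^{-1}\mu}_!$ are isomorphic to the identity by explicit convolution of kernels, thereby extending Marastoni's argument (which handles $\lambda = 0,\ \mu = \rho - v\rho$) to arbitrary parameters. Using Lemma \ref{intker} together with the base change isomorphism (Proposition \ref{bc}) and the projection formula (Proposition \ref{pullmon}), one rewrites $R^{v^{-1},-v^{-1}\mu}_! \circ R^{v,\mu}_+$ as the integral transform on $G/P_I \times G/P_I$ whose kernel is the convolution
\[
K^{v^{-1},-v^{-1}\mu}_! \ast K^{v,\mu}_+ \;=\; p_{13+}\bigl(p_{12}^!\,K^{v^{-1},-v^{-1}\mu}_! \totimes p_{23}^!\,K^{v,\mu}_+\bigr),
\]
computed on the triple product $G/P_I \times G/P_J \times G/P_I$. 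Note that Proposition \ref{compose} itself does not apply because $\ell(vv^{-1}) \neq \ell(v) + \ell(v^{-1})$, so the convolution must be evaluated by a direct geometric computation.

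The essential input from the maximal-parabolic hypothesis $\Pi \setminus I = \{\alpha\}$ is that $v = w^I_0 w^\Pi_0$ is the minimal-length representative of the unique non-trivial double coset in $W_I \backslash W / W_J$, so $\bO_v \subset G/P_J \times G/P_I$ and $\bO_{v^{-1}} \subset G/P_I \times G/P_J$ are both open $G$-orbits and the inclusions $j_v$, $j_{v^{-1}}$ are open immersions. Applying Proposition \ref{bc} to $j_v$ and $j_{v^{-1}}$ together with the projection formula for open immersions, the tensor product $p_{12}^!\,K^{v^{-1},-v^{-1}\mu}_! \totimes p_{23}^!\,K^{v,\mu}_+$ can be rewritten as an extension from the open subset
\[
U \;:=\; p_{12}^{-1}(\bO_{v^{-1}}) \cap p_{23}^{-1}(\bO_v) \;\subset\; G/P_I \times G/P_J \times G/P_I
\]
of a specific line bundle $\calL_U$ produced by pulling back and tensoring $\det\Theta_{p^{v^{-1}}_1} \otimes \calL^{-v^{-1}\mu}_{\bO_{v^{-1}}}$ and $\det\Theta_{p^v_1} \otimes \calL^\mu_{\bO_v}$.

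The heart of the argument is to identify the geometry of $U$ and the line bundle $\calL_U$. Using the parameterizations $\bO_{v^{-1}} = G \cdot (eP_I, \dot v^{-1} P_J)$ and $\bO_v = G \cdot (\dot v P_J, eP_I)$ together with the relation $\dot v L_J \dot v^{-1} = L_I$ (from $vJ = I$), one verifies that $p_{13}|_U \colon U \to G/P_I \times G/P_I$ factors through the diagonal $\Delta$ and is an affine-space fibration of relative dimension $\ell(v)$ over $\Delta$. Combining this with $\det\Theta_{p^v_1} \otimes \calL^\mu_{\bO_v} \cong p_2^{v*}\calL^{v\rho - \rho + \mu}_{G/P_I}$ (and the analogue for $v^{-1}$, where the twist $-v^{-1}\mu$ is precisely what is needed to make things match), one checks that $\calL_U$ is exactly the relative dualizing sheaf of $p_{13}|_U$. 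Therefore $p_{13+}\calL_U \cong \Delta_+ \calO_{G/P_I}$, which is the identity kernel on $G/P_I \times G/P_I$, and the composition is the identity functor on $D^b_{\textit hol}(\calD^\lambda_{G/P_I}\modu)$. The reverse composition is handled by the same argument after interchanging $(I, v, \mu) \leftrightarrow (J, v^{-1}, -v^{-1}\mu)$, using $(v^{-1})^{-1} = v$ and $-v(-v^{-1}\mu) = \mu$.

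The principal obstacle is this final step: one must track the interplay between $\det\Theta_{p^v_1}$, $\calL^\mu$, $\det\Theta_{p^{v^{-1}}_1}$, $\calL^{-v^{-1}\mu}$ and the relative canonical sheaves of the projections $p_{12}, p_{23}, p_{13}$, and verify that all twists combine to give exactly the relative dualizing sheaf of $p_{13}|_U$, leaving no residual twist after $p_{13+}$. The formula $\det\Theta_{p^v_1} \cong p^{v*}_2 \calL^{v\rho - \rho}_{G/P_I}$ from Section \ref{sectflag}, together with the group-theoretic identity $\dot v L_J \dot v^{-1} = L_I$, is the key input that forces this cancellation to go through.
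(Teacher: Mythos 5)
The overall strategy—reduce to showing the convolved kernel $K^{v^{-1},-v^{-1}\mu}_! \ast K^{v,\mu}_+$ is the identity kernel $\Delta_+\calO_{G/P_I}$, via base change and the projection formula—matches the paper, as does your observation that Proposition~\ref{compose} doesn't apply here. However, there is a genuine gap in your treatment of the off-diagonal behavior, which is where the real work lies.

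You claim that $p_{13}|_U \colon U \to G/P_I \times G/P_I$ factors through the diagonal $\Delta$. This is false. Write $U = p_{12}^{-1}(\bO_{v^{-1}}) \cap p_{23}^{-1}(\bO_v)$. For a point $(x_1, x_2) \in G/P_I \times G/P_I$ with $x_1 \neq x_2$, the fiber of $p_{13}|_U$ over $(x_1,x_2)$ is the intersection of two open subsets $U_1 \cap U_2 \subset G/P_J$ (the loci of $y$ opposite to $x_2$ and opposite to $x_1$ respectively); this is generically a dense open subset of $G/P_J$, not empty. Already for $G = SL_2$, $I = J = \emptyset$: the fiber over $(x_1, x_2)$ is $\bP^1 \setminus \{x_1, x_2\} \cong \bC^*$. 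So the off-diagonal contribution does \emph{not} vanish for geometric reasons; one must actually prove that the D-module pushforward over those nonempty fibers vanishes.

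This is precisely what the paper does, and it is the nontrivial cohomological step that your sketch elides. The paper shows that after the base change reductions, the fiber computation at $(x_1,x_2)$ off the diagonal reduces to $a_{U_1+}(j_{1!}\calO_{U_1\cap U_2})$ where $U_1$ is an open Bruhat cell (an affine space) and $j_1 \colon U_1 \cap U_2 \hookrightarrow U_1$ an open immersion. Passing through the Riemann--Hilbert correspondence, vanishing follows from the distinguished triangle
\[
\bR\Gamma(j_{1!}\bC_{U_1\cap U_2}) \to \bR\Gamma(\bC_{U_1}) \to \bR\Gamma(\bC_Z) \xrightarrow{+1}
\]
where $Z = U_1 \setminus (U_1 \cap U_2)$, combined with a separate lemma that $Z$ (the intersection of the open Bruhat cell with a translate of the Schubert divisor) is contractible, so the second map is an isomorphism $\bC \to \bC$. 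This contractibility lemma, and the invocation of Riemann--Hilbert, are indispensable ingredients with no analogue in your sketch; the affine-fibration picture you describe only accounts for the restriction to the diagonal (where the fiber of $p_{13}$ really is an open Bruhat cell, hence an affine space, and the twist cancellation you outline does work). Without the off-diagonal vanishing argument, the proof is incomplete.
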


\begin{proof}
We shall prove the isomorphism $R^{v^{-1}, -v^{-1}\mu}_! \circ R^{v,\mu}_+ \cong {\id}$. The isomorphism $R^{v,\mu}_+ \circ R^{v^{-1}, -v^{-1}\mu}_! \cong {\id}$ is proved similarly. 

We consider following diagram. 
We denote by $p_1$ and $p_2$ (resp. $p'_1$ and $p'_2$, $p''_1$ and $p''_2$) the first and second projection from $G/P_J  \times G/P_I$ (resp. $G/P_I \times  G/P_J$, $G/P_I \times G/P_I$). We denote by $p_{12}, p_{23}, p_{13}$ the projection from $G/P_I \times G/P_J \times G/P_I$ to the product of two of three the factors. These morphisms are all smooth and proper morphisms.

\[
\xymatrix{
          &         &   G/P_I \times  G/P_I \ar@/_25pt/[llddd]_{p^{''}_1} \ar@/^25pt/[rrddd]^{p^{''}_2}  &        &    \\
          &         &  G/P_I \times G/P_J  \times  G/P_I \ar[u]_{p_{13}} \ar[dl]_{p_{12}} \ar[dr]^{p_{23}} &    &    \\
          &  G/P_I \times  G/P_J \ar[dl]_{p'_1} \ar[dr]^{p'_2} &   & G/P_J  \times G/P_I \ar[dl]_{p_1} \ar[dr]^{p_2} & \\
  G/P_I  &  \bO_{v^{-1}} \ar[l]_{ p^{{v}^{-1}}_1} \ar[r]^{p^{v^{-1}}_2 } \ar[u]_{j_{v^{-1}}} &  G/P_J  &  \bO_v \ar[l]_{p^{v}_1} \ar[r]^{p^{v}_2} \ar[u]_{j_v} &   G/P_I \\
}
\]

Using Lemma \ref{intker} the kernel which gives $R^{v^{-1}, -v^{-1}\mu}_! \circ R^{v,\mu}_+$ is calculated using base change as follows.

\begin{align}
R^{v^{-1}, -v^{-1}\mu}_! \circ R^{v,\mu}_+(\calM) & \cong p'_{1!}(K^{v^{-1}, -v^{-1}\mu}_! \totimes p'^!_2 \circ p_{1+} (K^{v,\mu}_+ \totimes p^!_2 (\calM)         )) \label{isom01}  \\
                                  & \cong p'_{1!}(K^{v^{-1}, -v^{-1}\mu}_! \totimes p_{12+} \circ p^!_{23}(K^{v,\mu}_+ \totimes (p_2 \circ p_{23})^!(\calM) )) \label{isom02} \\
    & \cong (p'_{1} \circ p_{12})_! (p^!_{12}(K^{v^{-1}, -v^{-1}\mu}_!) \totimes p^!_{23}(K^{v,\mu}_+) \totimes (p_2 \circ p_{23})^+ (\calM) ) \label{isom03} \\
       & \cong p''_{1+} (p_{13+}(p^!_{12}(K^{v^{-1}, -v^{-1}\mu}_!) \totimes p^!_{23}(K^{v,\mu}_+)) \totimes p''^!_2(\calM)) \label{isom04}
\end{align}
The isomorphism (\ref{isom02}) follows from the base change isomorphism (\ref{bc}). The isomorphism (\ref{isom03}) and (\ref{isom04}) follows from the projection formula (Proposition \ref{pullmon}, (ii)). We interchanged $*$ and $!$ for smooth and proper morphisms. 

Thus we see that the composition of intertwining functors are given by the convolution $K^{v^{-1}, -v^{-1}\mu}_! \ast K^{v,\mu}_+ := p_{13+}(p^!_{12}(K^{v^{-1}, -v^{-1}\mu}_!) \totimes p^!_{23}(K^{v,\mu}_+))$. Let $\Delta :G/P_I \to G/P_I \times G/P_I$ be the diagonal immersion. 
It is enough to show that there there is an isomorphism $K^{v^{-1}, -v^{-1}\mu}_! \ast K^{v,\mu}_+ \cong \Delta_+(\calO_{G/P_I \times G/P_I})$, since the latter kernel gives the identity functor.
To construct this isomorphism it is enough to prove the following two isomorphisms. 

\begin{align}
\left.(K^{v^{-1}, -v^{-1}\mu}_! \ast K^{v,\mu}_+)\right\rvert_{G/P_I \times G/P_I \setminus \Delta(G/P_I)} \cong 0 \label{eq1}\\
\Delta^!(K^{v^{-1}, -v^{-1}\mu}_! \ast K^{v,\mu}_+) \cong \calO_{G/P_I}[\dim G/P_I] \label{eq2}
\end{align}

\noindent{\it Proof of (\ref{eq1})}

Let $x_1,x_2$ be two distinct points of $G/P_I$. We define two open subsets of $G/P_J$ by $U_1:=p_1(p_2^{-1}(x_1) \cap \bO_v)$ and $U_2 :=p'_2(p'^{-1}_1(x_2) \cap \bO_{v^{-1}})==p_1(p_2^{-1}(x_2) \cap \bO_v)$. 
We denote by $s_1$ and $s_2$ the closed immersion of $U_1$ and $U_2$ into $\bO_v$ and $\bO_{v^{-1}}$, compatible with $p_{23}\circ \tilde x$ and $p_{12}\circ \tilde x$ and by $i_1$ and $i_2$ the open immersion of $U_1$ and $U_2$ into $G/P_J$. 

We consider following diagrams. 
We denote by $x$ the morphism $\{\star\} \to G/P_I \times G/P_I$ which sends $\star$ to $(x_1,x_2)$ and by $\tilde x$ the morphism $G/P_J \to G/P_I \times G/P_J \times G/P_I$ which sends $y \in G/P_J$ to $(x_1, y, x_2)$. 
\[
\xymatrix{
 y \ar@{|->}[d]  & G/P_J \ar@{}[l]|{\in} \ar[r]^{a_{G/P_J}} \ar[d]^{\tilde{x}} & \{\star\} \ar@{}[r]|{\ni} \ar[d]^{x} & \star \ar@{|->}[d]\\
(x_1, y ,x_2)  & G/P_I \times G/P_J \times G/P_I \ar@{}[l]|-{\in} \ar[r]^-{p_{13}} & G/P_I \times G/P_I  \ar@{}[r]|{\ni}&  (x_1, x_2)
}
\]
\[
\xymatrix{
 U_2 \ar[r]^{s_2} \ar[d]^{i_2}& \bO_{v^{-1}} \ar[d]^{j_{v^{-1}}} & \bO_{v} \ar[d]^{j_v}& U_1 \ar[d]^{i_1} \ar[l]_{s_1}\\
 G/P_J \ar[r]^-{p_{12} \circ \tilde{x}} & G/P_I \times G/P_J &  G/P_J \times G/P_I & G/P_J \ar[l]_-{p_{23} \circ \tilde{x}}
}
\]
We denote by $j_1$ and $j_2$ the open immersion of $U_1 \cap U_2$ into $U_1$ and $U_2$. 
\[
\xymatrix{
U_1 \cap U_2 \ar[r]^{j_1} \ar[d]_{j_2} & U_1 \ar[d]^{i_1} \\
U_2 \ar[r]_{i_2} & G/P_J
}
\]

It is enough to show the isomorphism $x^!(K^{w^{-1}}_! \ast K^{w}_+) \cong 0$. 
\begin{align}
x^!(K^{v^{-1}, -v^{-1}\mu}_! \ast K^{v,\mu}_+) & \cong x^!\circ p_{13+}( p^!_{12}(K^{v^{-1}, -v^{-1}\mu}_!) \totimes p^!_{23}(K^{v,\mu}_+))\label{isom11}\\
& \cong a_{G/P_J+}\circ \tilde{x}^! (p^!_{12}(K^{v^{-1}, -v^{-1}\mu}_!) \totimes p^!_{23}(K^{v,\mu}_+)) \label{isom12}\\
& \cong a_{G/P_J+}((p_{12} \circ \tilde{x})^!(K^{v^{-1}, -v^{-1}\mu}_!) \totimes (p_{23} \circ \tilde{x})^!(K^{v,\mu}_+) ) \label{isom13}\\
& \cong a_{G/P_J+}(i_{2!}\circ s^!_2( \det\Theta_{p^{\prime}_1} \totimes \calL^{-v^{-1}\mu}_{\bO_{v^{-1}}}) \totimes i_{1+}\circ s_{1}^!(\det\Theta_{p_1} \totimes \calL^{\mu}_{\bO_v})) \label{isom14}\\
& \cong a_{G/P_J+}(i_{2!}(\calO_{U_2}) \totimes i_{1+}(\calO_{U_1}) ) \label{isom15}\\
& \cong a_{G/P_J+} \circ i_{1+} \circ i_1^! \circ i_{2!}(\calO_{U_2}) \label{isom16}\\
& \cong a_{U_1+}(j_{1!}(\calO_{U_1 \cap U_2}))\label{isom17}
\end{align}
The isomorphism (\ref{isom12}) follows from the base change, (\ref{isom13}) follows from the fact that $\tilde x$ is a monoidal functor (Proposition \ref{pullmon} (i)) and (\ref{isom14}) follows from the base change. 
The isomorphism (\ref{isom15}) is a consequence of the fact that the locally free sheaves $\Theta_{p'_1}$, $\Theta_{p_1}$ and invertible sheaves $\calL^{-v^{-1}\mu}_{\bO_{v^{-1}}}$ and $\calL^{\mu}_{\bO_v}$ are trivial on affine spaces $U_1$ and $U_2$. 
The isomorphism (\ref{isom16}) follows from the projection formula. 
The isomorphism (\ref{isom17}) follows from that we have $i^!_1 \cong i^+_1$ because $i_1$ is an open immersion, and that by the base change theorem we have $i^+_1 \circ i_{2!} \cong j_{1!} \circ j^+_2$.

The last term is a (non-twisted) regular holonomic D-module. We use the compatibility of six operations of D-modules on smooth algebraic varieties and six operations of constructible sheaves on associated complex manifolds under the de Rham functor $\DR(-):=\bR\calHom_{\calD_X}(\calO_X, -)$ (known as the Riemann-Hilbert correspondence). 

By the compatibility of the direct image functor and the de Rham functor \cite[\S 14.5.(1)]{Bor87}, we have 
\[a_{U_1+}j_{1!}\calO_{U_1 \cap U_2} \cong \bR\Gamma \DR(j_{1!}\calO_{U_1 \cap U_2}) \cong \bR\Gamma j_{1!}(\bC_{U_1 \cap U_2}).\]
Here for an algebraic variety $X$, we denote by $\bC_X$ the constant sheaf on associated complex manifold $X^{\textit{an}}$. Let $Z := U_1 \setminus (U_1 \cap U_2)$ be a closed subset of $U_1$ and $i_Z : Z \hookrightarrow U_1$ be the closed immersion. 

We have the following distinguished triangle of complexes of vector spaces.

\[ \bR\Gamma ( j_{1!}\bC_{U_1 \cap U_2}) \to \bR\Gamma( \bC_{U_1}) \to \bR\Gamma( i_{Z*}\bC_Z) \overset{+1}\to  \]
Since $U_1$ is an affine space the second term in this distinguished triangle is isomorphic to $\bC$ concentrated in degree 0. By the lemma below, the third term in this distinguished triangle is isomorphic to $\bC$ concentrated in degree 0 and the morphism is nonzero. From this we obtain $\bR\Gamma ( j_{1!}\bC_{U_1 \cap U_2}) \cong 0$.

\begin{lem}
Let $G$ be a semisimple algebraic group over $\bC$ and $P$ be a parabolic subgroup containing a Borel subgroup $B$. Let $C$ be the unique open $B$-orbit in $G/P$ and $Y$ be its complement. Then for any $g \in G$, the closed subvariety $C \cap gY$ of $C$ is contractible. 
\end{lem}

\begin{proof}
Since $C$ and $Y$ are $B$-stable, it is enough to consider the case when $g$ is a representative of some Weyl group element $w$. The subvariety $wY$ of $G/P_J$ is $T$-stable. Since $C$ contracts to a point by $\bG_m$-action induced by a dominant regular coweight of $T$, the closed $T$-stable subset $C \cap wY$ also contracts to a point. 
\end{proof}

\noindent{\it Proof of (\ref{eq2})}

We consider following diagrams. 

We denote by $\tau: G/P_I \times G/P_I \to G/P_I \times G/P_I$ and by $\tilde \tau : G/P_I \times G/P_J \to G/P_J \times G/P_I$ the permutation and by $\tilde \Delta$ and by $\tilde \Delta'$ the product of identity and $\Delta$. 
\[
\xymatrix{
& G/P_I \ar[d]^{\Delta}& \\
& G/P_I \times G/P_I \ar@(d,r)[]^{\tau} & \\
& G/P_I \times G/P_J \times G/P_I \ar[u]_{p_{13}} \\
G/P_I \times G/P_J \ar[ur]^{{\tilde \Delta}^{\prime}} \ar[uuur]^{p^{\prime}_1} \ar[rr]^{\tilde \tau}& & G/P_J \times G/P_I \ar[ul]_{\tilde \Delta} \ar[uuul]_{p_2}
}
\]

\[
\xymatrix{
\bO_v \ar[d]_{j_{v}} & \bO_{v^{-1}} \ar[l]^{\tilde{\tau} \vert_{\bO_{v^{-1}}}}_{\sim} \ar[d]^{j_{v^{-1}}} \\
 G/P_J \times G/P_I & G/P_I \times G/P_J \ar[l]^{\tilde \tau}_{\sim} 
}
\]

We have 
\begin{align}
\Delta^!(K^{v^{-1}, -v^{-1}\mu}_! \ast K^{v,\mu}_+) & \cong \Delta^! \circ p_{13+}( p^!_{12}(K^{v^{-1}, -v^{-1}\mu}_!) \totimes p^!_{23}(K^{v,\mu}_+) )\label{isom21}\\
& \cong p^{\prime}_{1+} \circ \widetilde{\Delta}'^{!}(p^!_{12}(K^{v^{-1}, -v^{-1}\mu}_!) \totimes p^!_{23}(K^{v, \mu}_+)) \label{isom22}\\
& \cong p^{\prime}_{1+} (\widetilde{\Delta}'^{!} \circ p^!_{12}(K^{v^{-1}, -v^{-1}\mu}_!) \totimes \widetilde{\Delta}'^{!} \circ p^!_{23}(K^{v,\mu}_+)) \label{isom23}\\
& \cong p^{\prime}_{1+}(K^{v^{-1}, -v^{-1}\mu}_! \totimes \tilde{\tau}^!(K^{v,\mu}_+)) \label{isom24}\\
& \cong p^{\prime}_{1+} \circ {j_{v^{-1}}}_+(\det\Theta_{p^{v^{-1}}_1} \totimes \calL^{-v^{-1}\mu}_{\bO_v^{-1}} \totimes  (\tilde\tau\vert_{\bO_{v^{-1}}})^*\det\Theta_{p^{v}_1} \totimes \calL^{v^{-1}\mu}_{\bO_v^{-1}}) \label{isom25}\\
& \cong \calO_{G/P_I}[\dim G/P_I] \label{isom26}
\end{align}

The isomorphism (\ref{isom22}) follows from the base change. The isomorphism (\ref{isom23}) follows from the fact that !-pullback is monoidal. The isomorphism (\ref{isom25}) follows from the projection formula. The isomorphism (\ref{isom26}) follows from the fact that $\det\Theta_{p^{v^{-1}}_1}$ and $(\tilde\tau\vert_{\bO_{v^{-1}}})^*\det\Theta_{p^{v}_1}$ are mutually inverse invertible sheaves, that $\calL^{-v^{-1}\mu}_{\bO_v^{-1}}$ and $\calL^{v^{-1}\mu}_{\bO_v^{-1}}$ are mutually inverse and the fact that $p^{v^{-1}}_1=p'_1 \circ j_{v^{-1}}$ is an affine space fibration. 
\end{proof}

\noindent{\it Proof of Theorem \ref{thmequiv}}\\
We shall prove the isomorphism $R^{w^{-1}, -w^{-1}\mu}_! \circ R^{w,\mu}_+ \cong {\id}$. The isomorphism $R^{w,\mu}_+ \circ R^{w^{-1}, -w^{-1}\mu}_! \cong {\id}$ is proved similarly. 

By Proposition \ref{BH} and Proposition \ref{compose}, it is enough to prove the theorem for $w=v[\alpha, J]:= w^{J \cup \{ \alpha \}}_0w^{J}_0$ for some $\alpha \in \Pi$ and $I=v[\alpha, J]J$.  
We assume this. 

We denote by $\alpha'$ the element of $\Pi$ such that $I \cup \{\alpha'\} = J \cup \{\alpha \}$. 
We have the following diagram. 
\[
 G/P_J \overset{p^w_1}\leftarrow \bO_{w} \overset{p^w_2}\rightarrow G/P_I 
\]

We consider the $\bar P_{J \cup \{\alpha \}}$-orbit of $eP_I$ and $eP_J$. These orbits are isomorphic to $L_{I \cup \{\alpha'\} }/P^{I \cup \{\alpha'\}}_{I} \times \bar U_{I \cup \{\alpha'\}}$ and $L_{J \cup \{\alpha\} }/P^{J \cup \{\alpha\}}_{J} \times \bar U_{J \cup \{\alpha\}}$ as algebraic varieties respectively. The pullback of these orbits coincide and isomorphic to $ \bO^{L_{I \cup \{\alpha'\} }}_{w} \times \bar U_{I \cup \{\alpha'\}}$, where $\bO^{L_{I \cup \{\alpha'\} }}_{w}$ is $\bO_{w}$ for $L_{I \cup \{\alpha'\} }$. 

By Lemma \ref{lemequiv}, we have an isomorphism $R^{w^{-1}, -w^{-1}\mu}_! \circ R^{w,\mu}_+(\calM) \cong \calM$ on $L_{I \cup \{\alpha'\} }/P^{I \cup \{\alpha'\}}_{I} \times \bar U_{I \cup \{\alpha'\}}$. Take any $x \in G/P_I$. Take the parabolic subgroup of $G$ corresponding to $x$ and take $B$, $\Pi$,$\dots$ compatibly. Then we have an isomorphism $R^{w^{-1}, -w^{-1}\mu}_! \circ R^{w,\mu}_+(\calM) \cong \calM$ near $x$. This completes the proof of the theorem.


\section{Intertwining functors and global sections}\label{sectsect}

\subsection{Global sections}

In this subsection we prove general properties of the global section functor $\bR\Gamma: D^-(\calD^\lambda_{G/P_I}\modu) \to D^-(\Gamma(\calD^\lambda_{G/P_I})\modu)$ and $\bL\Delta_I:D^-(\Gamma(\calD^\lambda_{G/P_I})\modu) \to D^-(\calD^\lambda_{G/P_I}\modu)$ in the case of partial flag varieties and for not necessarily antidominant $\lambda$ using results cited in \S \ref{TDOflag}. In this section we consider bounded above complexes because we do not know whether the algebra $\Gamma(\calD^\lambda_{G/P_I})$ is of finite global dimension.

\begin{lem} \label{lemsurj}
Assume that $\lambda$ is regular. Then $\psi^{\lambda}:{\rm U}^\lambda_I:=\calU(\fg)/I_{\fp_I}(\lambda-2\rho_{\fn_I}) \to \Gamma(\calD^\lambda_{G/P_I})$ is an isomorphism. 
\end{lem}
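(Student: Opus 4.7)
The injectivity of $\psi^\lambda$ is automatic: the preceding proposition identifies the kernel of $\psi^\lambda:\calU(\fg)\to\Gamma(\calD^\lambda_{G/P_I})$ as $I_{\fp_I}(\lambda-2\rho_{\fn_I})$, which is exactly the ideal we quotient out in defining ${\rm U}^\lambda_I$. The substance of the lemma is therefore surjectivity.

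My plan is to apply Kitchen's compatibility (Proposition \ref{propKit}) to $\calM:=\calD^\lambda_{G/P_I}$, viewed as a left module over itself. Since $p_I^!\calD^\lambda_{G/P_I}=\calD^\lambda_{G/B}$ and higher cohomology of TDOs on flag varieties vanishes (Proposition \ref{Dcohvan}), the Kitchen isomorphism evaluated at this $\calM$ reduces to
\[
{\rm U}^\lambda \;=\; \Gamma(G/B,\calD^\lambda_{G/B}) \;\cong\; q_I^*\,\Gamma(\calD^\lambda_{G/P_I})
\]
as left ${\rm U}^\lambda$-modules, where on the left I have used that $\alpha^\lambda$ is an isomorphism on the full flag variety. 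Both sides are cyclic ${\rm U}^\lambda$-modules with canonical generator $1$, and by naturality of Kitchen's iso in $\calM$ (evaluated at the identity endomorphism of $\calM$) the identification sends $1\mapsto 1$. The abstract isomorphism is therefore realized by the ring map $q_I$ itself, so $q_I:{\rm U}^\lambda\to\Gamma(\calD^\lambda_{G/P_I})$ is an isomorphism.

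Since $q_I$ factors as ${\rm U}^\lambda\twoheadrightarrow{\rm U}^\lambda_I\xrightarrow{\psi^\lambda}\Gamma(\calD^\lambda_{G/P_I})$, both arrows must be isomorphisms; in particular $\psi^\lambda$ is an isomorphism, as desired. As a by-product I also obtain the equality $I(\lambda-2\rho)=I_{\fp_I}(\lambda-2\rho_{\fn_I})$ for regular $\lambda$. The regularity hypothesis enters here through the Beilinson--Bernstein equivalence (Theorem \ref{BDequiv}) that underpins Kitchen's proposition. The main technical delicacy is the naturality step identifying the abstract ${\rm U}^\lambda$-module iso with $q_I$; this amounts to tracing the unit section through both sides of Kitchen's functorial comparison.
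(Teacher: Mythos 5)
Your plan hinges on the identity $p_I^!\calD^\lambda_{G/P_I}=\calD^\lambda_{G/B}$, and that identity is false. What holds is the TDO-level statement $p_I^\#\calD^\lambda_{G/P_I}\cong\calD^\lambda_{G/B}$; the \emph{module} pullback $p_I^!\calD^\lambda_{G/P_I}$ is, by its definition (and since $p_I$ is smooth), the $\calO$-module pullback $p_I^*\calD^\lambda_{G/P_I}$, which as a left $\calD^\lambda_{G/B}$-module is the transfer bimodule, i.e.\ the quotient of $\calD^\lambda_{G/B}$ by the left ideal generated by the relative tangent sheaf $\Theta_{p_I}$. It is strictly smaller than $\calD^\lambda_{G/B}$ whenever $I\neq\emptyset$. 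In fact, since $\bR p_{I*}\calO_{G/B}\cong\calO_{G/P_I}$ (the fibers $P_I/B$ are rational projective varieties), the projection formula gives $\bR\Gamma(G/B,\,p_I^!\calD^\lambda_{G/P_I})\cong\Gamma(\calD^\lambda_{G/P_I})$, not ${\rm U}^\lambda$. Substituting $\calM=\calD^\lambda_{G/P_I}$ into Kitchen's isomorphism therefore produces $\Gamma(\calD^\lambda_{G/P_I})\cong q_I^*\,\Gamma(\calD^\lambda_{G/P_I})$ as ${\rm U}^\lambda$-modules, a tautology that carries no information about whether $q_I$ is surjective. The circularity is intrinsic: to place ${\rm U}^\lambda$ on the left side of Kitchen's comparison you would need a $\calD^\lambda_{G/P_I}$-module whose derived global sections on $G/P_I$ have underlying space ${\rm U}^\lambda$, and exhibiting such a module is essentially equivalent to the lemma itself.

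The paper closes the gap by a multiplicity count rather than a unit-section argument. It invokes the known antidominant case (Bien, or Jantzen's surjectivity of $a^\lambda$ together with Soergel's isomorphism $\alpha^\lambda$), the observation that the degeneration to $\gr\calD^\lambda_{G/P_I}$ gives $\Gamma(\calD^\lambda_{G/P_I})\cong\Gamma(\calO_{T^*G/P_I})$ as adjoint $G$-modules for every $\lambda$, and Jantzen's identity $I_{\fp_I}(\lambda-2\rho_{\fn_I})=I_{\fp_J}(w^{-1}*\lambda-2\rho_{\fn_J})$ to transport ${\rm U}^\lambda_I$ to an antidominant parameter. Matching finite $G$-multiplicities on the two sides of the already-injective $\psi^\lambda$ then forces it to be an isomorphism. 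Any repair of your approach would have to replace the Kitchen step by some independent control on the size of $\Gamma(\calD^\lambda_{G/P_I})$, which is exactly what the associated-graded comparison supplies.
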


\begin{proof}
When $\lambda$ is antidominant, this is proved by Bien \cite[Proposition I.5.6]{Bie90}. This is also proved by combining Proposition \ref{asurj} and Proposition \ref{alphasurj}. 

By Proposition \ref{Dcohvan}, we have an isomorphism $\Gamma(\calD^{\lambda}_{G/P_I}) \cong \Gamma(\gr \calD^{\lambda}_{G/P_I}) \cong \Gamma (\calO_{T^*G/P_I}) $ as $G$-module for any $\lambda$. Hence the multiplicity of each finite dimensional $G$-module in $\Gamma(\calD^{\lambda}_{G/P_I})$ is finite and independent of $\lambda$. 

For general regular $\lambda$, pick $w\in W$ such that $I=wJ$ and $w^{-1}*\lambda$ is antidominant. 
Since $\psi^{\lambda}$ is injective, it is enough to show that both sides have the same finite multiplicity. 
By the result of Jantzen (Proposition \ref{sameideal}) and the equality $\rho -w\rho = \sum_{\alpha \in \Delta^+, w^{-1}\alpha < 0} \alpha = \rho_{\fn_I} - w \rho_{\fn_J}$, we see that equality  $I_{\fp_I}(\lambda-2\rho_{\fn_I}) = I_{\fp_J}(w^{-1}*\lambda-2\rho_{\fn_J})$ holds. 
Since $w^{-1}*\lambda$ is dominant, this implies $\calU(\fg)/I_{\fp_I}(\lambda-2\rho_{\fn_I}) \cong \calU(\fg)/I_{\fp_J}(w^{-1}*\lambda-2\rho_{\fn_J}) \cong \Gamma(\calO_{T^*G/P})$ as $G$-modules and hence they have the same finite multiplicity for any finite dimensional representation of $G$. Hence we see that $\calU(\fg)/I_{\fp_I}(\lambda-2\rho_{\fn_I})$ and $\Gamma(\calD^{\lambda}_{G/P_I})$ have the same finite multiplicity. 
\end{proof}

To prove a localization theorem for partial flag varieties, we need following two lemmas. 

\begin{lem}\label{counit}
The counit $\eta : \bR\Gamma \circ \bL\Delta_I \to \id$ is an isomorphism. 
\end{lem}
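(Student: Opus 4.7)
The plan is to reduce to free modules over $A := \Gamma(\calD^\lambda_{G/P_I})$ and exploit the cohomology vanishing of Proposition \ref{Dcohvan}. Set $\calD := \calD^\lambda_{G/P_I}$. First I would check the claim on the rank one free module $A$: by construction $\Delta_I(A) = \calD \otimes_A A = \calD$, which is its own flat replacement, so $\bL\Delta_I(A) \cong \calD$. Proposition \ref{Dcohvan} then gives $\bR\Gamma(\calD) \cong A$ concentrated in degree zero, and the natural morphism in question is visibly the identity on $A$.

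Next I would extend this to arbitrary free modules $A^{\oplus S}$. The functor $\Delta_I$ trivially commutes with direct sums, and $\bR\Gamma$ commutes with arbitrary direct sums of quasi-coherent sheaves on the noetherian scheme $G/P_I$ (because $H^i(G/P_I,-)$ commutes with filtered colimits and the relevant higher groups vanish on $\calD$). Hence $\bR\Gamma(\bL\Delta_I(A^{\oplus S})) \cong A^{\oplus S}$, with the comparison map again the identity.

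Finally, for a general $M \in D^-(A\modu)$ I would take a bounded-above resolution $P^\bullet \to M$ by free $A$-modules, which exists since we are in $D^-$. Then $\bL\Delta_I(M)$ is represented by the complex $\calD \otimes_A P^\bullet$, each term of which is a (possibly infinite) direct sum of copies of $\calD$ and hence $\Gamma$-acyclic by the previous step. Therefore $\bR\Gamma \circ \bL\Delta_I(M)$ is computed termwise and equals $A \otimes_A P^\bullet = P^\bullet \cong M$; by naturality of the construction this identification agrees with the canonical morphism appearing in the statement, so the latter is a quasi-isomorphism.

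The principal point on which the argument turns is the cohomology vanishing of Proposition \ref{Dcohvan} together with the compatibility of $\bR\Gamma$ with direct sums; once these are in hand, the rest is a formal resolution argument. The only genuine bookkeeping obstacle is to verify that the termwise isomorphism obtained from the free resolution really is induced by the adjunction morphism rather than some ad hoc map, but this is immediate from the functoriality of the counit/unit on the free generators.
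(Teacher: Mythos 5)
Your proof is correct and follows essentially the same route as the paper: take a bounded-above free resolution, observe that $\Delta_I$ carries it to a complex of free $\calD^\lambda_{G/P_I}$-modules, and invoke Proposition \ref{Dcohvan} to see these are $\Gamma$-acyclic so that $\bR\Gamma$ is computed termwise. Your version is somewhat more detailed (explicitly handling the rank-one case, direct sums, and the compatibility of $\bR\Gamma$ with direct sums of quasi-coherent sheaves), but the underlying argument is identical.
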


\begin{proof}
Let $M \in D^-(\Gamma(\calD^{\lambda}_{G/P_I})\modu)$.   
Take a free resolution $L$ of $M$. By Proposition \ref{Dcohvan} we have $\bR\Gamma \circ \bL\Delta_I (M) \cong \bR\Gamma \circ \Delta_I(L) \overset{\eta(L)}\longrightarrow L \cong M$. It is enough to show that $\Gamma \circ \Delta_I(L) \overset{\eta(L)}\longrightarrow L \cong M$ is an isomorphism. Since $L$ is a complex consisting of free $\Gamma(\calD^\lambda_{G/P_I})$-modules $\Delta_I(L)$ consists of free $\calD^{\lambda}_{G/P_I}$-modules. From this we deduce that $\eta(L)$ is an isomorphism. 
\end{proof}

\begin{lem}\label{faithful}
Assume that $\lambda$ is regular. 
Then the functor $\bR\Gamma$ is faithful. 
\end{lem}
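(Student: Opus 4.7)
Plan: my approach is to reduce the claim to the Beilinson-Bernstein equivalence on the full flag variety $G/B$ via Kitchen's compatibility (Proposition \ref{propKit}), and then to establish faithfulness of the pullback $p^!_I$ directly.

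Let $f : \calM \to \calM'$ be a morphism in $D^-(\calD^\lambda_{G/P_I}\modu)$ with $\bR\Gamma(G/P_I, f) = 0$. Applying the exact pullback functor $q^*_I$ along the ring homomorphism $q_I : \Gamma(\calD^\lambda_{G/B}) \to \Gamma(\calD^\lambda_{G/P_I})$ and invoking Proposition \ref{propKit} gives
\[ \bR\Gamma(G/B, p^!_I f) \;\cong\; q^*_I \,\bR\Gamma(G/P_I, f) \;=\; 0. \]
Since $\lambda$ is regular, the Beilinson-Bernstein theorem makes $\bR\Gamma(G/B, -)$ an equivalence of derived categories on the full flag variety, in particular a faithful functor; therefore $p^!_I f = 0$ in $D^-(\calD^\lambda_{G/B}\modu)$.

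It then remains to show that $p^!_I$ is itself faithful, for which I would exhibit a natural isomorphism $p_{I+} \circ p^!_I \cong \id$. The morphism $p_I : G/B \to G/P_I$ is smooth and proper, with fibers isomorphic to the flag variety $L_I/(L_I \cap B)$ of the Levi. These fibers satisfy $H^{>0}(\calO) = 0$, so $\bR p_{I*}\calO_{G/B} \cong \calO_{G/P_I}$. Combining this cohomological triviality with the projection formula of Proposition \ref{pullmon}(ii) (and Grothendieck-Serre duality on the fibers to handle the twist by $\omega_{p_I}$ appearing in the definition of $p_{I+}$) yields the desired identification. Applying $p_{I+}$ to the equation $p^!_I f = 0$ then produces $f = 0$, completing the proof.

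The principal technical obstacle is verifying $p_{I+} \circ p^!_I \cong \id$ cleanly in the twisted D-module setting: while the $\calO$-module analogue $\bR p_{I*} \circ p^*_I \cong \id$ is immediate from the vanishing $\bR p_{I*}\calO_{G/B} \cong \calO_{G/P_I}$, the D-module version requires careful bookkeeping of the relative canonical $\omega_{p_I}$ and the shift by relative dimension present in the definitions of $p_+$ and $p^!$. A minor secondary point is that Proposition \ref{propKit} and the Beilinson-Bernstein equivalence are stated for $D^b$, and one must extend them to $D^-$ via compatibility with truncations, which is routine.
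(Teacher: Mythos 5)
Your overall strategy matches the paper's: reduce to the full flag variety via Kitchen's Proposition \ref{propKit} and Theorem \ref{BDequiv}, and then establish faithfulness of $p^!_I$ by analyzing $p_{I+}\circ p^!_I$. The first half of your argument (that $p^!_I f = 0$ follows from $\bR\Gamma(G/P_I, f)=0$) is correct and is exactly what the paper does.

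However, the final step contains a genuine error. You claim $p_{I+}\circ p^!_I \cong \id$, and you justify this by the vanishing $H^{>0}(F,\calO_F)=0$ for the fiber $F\cong L_I/(L_I\cap B)$ together with the projection formula. This argument conflates the $\calO$-module pushforward $\bR p_{I*}$ with the D-module pushforward $p_{I+}$. For a smooth proper morphism $p$, the D-module pushforward $p_+ p^! \calM$ decomposes according to the \emph{de Rham} cohomology of the fibers (this is the content of the decomposition theorem / Künneth for D-module direct images), not the coherent cohomology. The fiber $F$ is a flag variety of positive dimension $d$ whenever $I\ne\emptyset$, so $H^\bullet_{\mathrm{DR}}(F)$ has total dimension $|W_I|>1$, concentrated in even degrees $0,2,\dots,2d$. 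Consequently $p_{I+}\circ p^!_I$ is a direct sum of shifted copies of the identity, one for each element of $W_I$ (with shifts by twice the length), and is \emph{not} isomorphic to $\id$. The vanishing of $H^{>0}(\calO_F)$ is simply the wrong cohomology group here, and Grothendieck--Serre duality on the fibers does not repair this.

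The conclusion is nevertheless salvageable, and the paper states the correct version: $\id$ appears as a \emph{direct summand} of $p_{I+}\circ p^!_I$ (this is the content of \cite[Lemma 3.5.4]{BeiGinSoe96}, adapted to the twisted D-module setting). Having $\id$ as a direct summand is all you need: if $p^!_I f = 0$ then $p_{I+}p^!_I f = 0$, hence the $\id$-component of this morphism vanishes, hence $f=0$. You should replace the isomorphism claim with the weaker (and true) direct-summand statement and cite the decomposition theorem / BGS.
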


\begin{proof}
We use the result of Kitchen (Proposition \ref{propKit}). 
The functor $\bR\Gamma(G/B,-)$ is an equivalence (Theorem \ref{BDequiv}). We can prove that the functor $p_{I+} \circ p^{!}_I$ has id as a direct summand in the same way as in \cite[Lemma 3.5.4]{BeiGinSoe96}. This implies that the functor $p^!_I$ is faithful. 
Since the composition functors $\bR\Gamma(G/B,-) \circ p^{!}_I \cong q^*_I \circ \bR\Gamma$ are faithful, we conclude that $\bR\Gamma: D^-(\calD^\lambda_{G/P_I}\modu) \to D^-(\Gamma(\calD^\lambda_{G/P_I})\modu)$ is faithful. 
\end{proof}

We now prove a localization theorem for $\calD^\lambda_{G/P_I}$-modules for not necessarily antidominant $\lambda$. 

\begin{prop}\label{propequiv}
Assume that $\lambda$ is regular. Then the functor $\bR\Gamma$ is an equivalence of categories.  
An inverse functor is given by $\bL\Delta_I$. 
\end{prop}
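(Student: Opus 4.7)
The plan is to derive Proposition \ref{propequiv} from Lemmas \ref{counit} and \ref{faithful} by the standard adjunction argument: an adjoint pair whose unit is an isomorphism and whose right adjoint is faithful is automatically an equivalence.

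First I would observe that $(\bL\Delta_I, \bR\Gamma)$ is an adjoint pair on the derived categories, with unit $\eta : \id \to \bR\Gamma \circ \bL\Delta_I$ and counit $\epsilon : \bL\Delta_I \circ \bR\Gamma \to \id$. By Lemma \ref{counit} the unit $\eta$ is an isomorphism of functors, which already shows that every object $M$ of $D^-(\Gamma(\calD^\lambda_{G/P_I})\modu)$ lies in the essential image of $\bR\Gamma$ (take $\bL\Delta_I(M)$). So the only thing left is to show that $\bR\Gamma$ is fully faithful, which is equivalent to showing that the counit $\epsilon$ is an isomorphism.

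To check this I would fix $\calN \in D^-(\calD^\lambda_{G/P_I}\modu)$ and embed the counit in a distinguished triangle
\[
\bL\Delta_I \bR\Gamma(\calN) \xrightarrow{\epsilon_{\calN}} \calN \to \calC \xrightarrow{+1}.
\]
Applying $\bR\Gamma$ and using the triangle identity $\bR\Gamma(\epsilon_{\calN}) \circ \eta_{\bR\Gamma(\calN)} = \id_{\bR\Gamma(\calN)}$ together with Lemma \ref{counit}, the morphism $\bR\Gamma(\epsilon_{\calN})$ becomes an isomorphism, so $\bR\Gamma(\calC) = 0$. By Lemma \ref{faithful}, the functor $\bR\Gamma$ is faithful, and a faithful functor reflects the zero object: from $\bR\Gamma(\id_{\calC}) = \id_{\bR\Gamma(\calC)} = 0 = \bR\Gamma(0_{\calC})$ one concludes $\id_{\calC} = 0_{\calC}$, hence $\calC = 0$. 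Therefore $\epsilon_{\calN}$ is an isomorphism, and so $\epsilon$ is an isomorphism of functors.

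Combining the two statements, $\bR\Gamma$ and $\bL\Delta_I$ are mutually inverse equivalences, which is precisely the claim. The main obstacle is really the already-done work in Lemmas \ref{counit} and \ref{faithful} (the latter crucially uses Kitchen's compatibility in Proposition \ref{propKit} together with the Beilinson--Bernstein equivalence on $G/B$); once these are in hand, the deduction above is purely formal. A minor point to be careful about is that we work with bounded-above derived categories, since $\Gamma(\calD^\lambda_{G/P_I})$ is not known to have finite global dimension, but all of the above arguments take place within $D^-$ and pose no extra issue.
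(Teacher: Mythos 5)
Your proposal is correct and follows essentially the same path as the paper: use Lemma \ref{counit} (unit/counit $\bR\Gamma\circ\bL\Delta_I \cong \id$) together with Lemma \ref{faithful}, pass to the mapping cone of the counit, and conclude from faithfulness that the cone vanishes. The only cosmetic difference is that you state the arrow direction of $\epsilon$ and the triangle identity $\bR\Gamma(\epsilon_{\calN})\circ\eta_{\bR\Gamma(\calN)}=\id$ precisely, where the paper's write-up has the arrow direction reversed and abbreviates the identity to an equality, but the argument is the same.
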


\begin{proof}
By Lemma \ref{counit}, $\eta$ is an isomorphism. We prove that $\epsilon$ is an isomorphism. 

Let $\calM \in D^b(\calD^{\lambda}_{G/P}\modu)$. 
Consider the distinguished triangle 
\[ \calM \stackrel{\epsilon(\calM)}{\longrightarrow} \bL\Delta_I\circ\bR\Gamma(\calM) \to C_{\epsilon(M)} \stackrel{+1}{\longrightarrow}, \]
where $C_{\epsilon(\calM)}$ is the mapping cone of the morphism $\epsilon(\calM)$. 

Apply $\bR\Gamma$ to this triangle.  We then obtain a distinguished triangle 

\[ \bR\Gamma(\calM) \stackrel{\bR\Gamma(\epsilon(\calM))}{\longrightarrow} \bR\Gamma\circ \bL\Delta_I\circ\bR\Gamma(\calM) \to \bR\Gamma(C_{\epsilon(M)}) \stackrel{+1}{\longrightarrow}. \]
Since $\bL\Delta_I$ is a left adjoint of $\bR\Gamma$, we have $\bR\Gamma(\epsilon(\calM))=\eta(\bR\Gamma(\calM)).$
Since $\eta$ is an isomorphism, we have $\bR\Gamma(C_{\epsilon(\calM)})=0.$ By Lemma \ref{faithful}, we have $C_{\epsilon(\calM)}=0$, which is equivalent to the statement that $\epsilon(\calM)$ is an isomorphism. 
\end{proof}

By Lemma \ref{lemsurj}, this proposition yields an equivalence $D^-(\calD^\lambda_{G/P}\modu) \cong D^-({\rm U}^\lambda_I\modu)$. 

\subsection{Global sections and intertwining functors}

In this subsection we study how the space of global sections behaves under intertwining functors. In this section we treat only $R^w_?$, i.e., set $\mu=0$. 

Let $\lambda \in (\fh/\fh_I)^*$. 
We have functors $\Gamma: \calD^\lambda_{G/P_I}\modu \to \Gamma(\calD^\lambda_{G/P_I})\modu$ and $\Gamma: \calD^{w^{-1}*\lambda}_{G/P_J}\modu \to \Gamma(\calD^{w^{-1}*\lambda}_{G/P_J})\modu$. The algebras $\Gamma(\calD^\lambda_{G/P_I})$ and $\Gamma(\calD^{w^{-1}*\lambda}_{G/P_J})$ are a priori not comparable. 
Here we consider their restriction to the quotient of enveloping algebra using $\psi^\lambda$ and $\psi^{w^{-1}*\lambda}$ in \S \ref{TDOflag}. We denote by $\Gamma^{\lambda}_I: \calD^\lambda_{G/P_I}\modu \to {\rm U}^\lambda_I\modu$ the composite $\psi^{\lambda*} \circ \Gamma$. 
As we have seen in the proof of Lemma \ref{lemsurj}, the codomains of functors $\bR\Gamma^\lambda_I$ and $\bR\Gamma^{w^{-1}*\lambda}_J \circ R^{w}_{+}$ coincide. 
The subject of this section is comparison of the functors $\bR\Gamma^{w^{-1}*\lambda}_J\circ R^w_+$, $\bR\Gamma^{w^{-1}*\lambda}_J \circ R^w_!$ and $\bR\Gamma^\lambda_I$.

We construct a morphism of functors $\bR\Gamma^\lambda_I \to \bR\Gamma^{w^{-1}*\lambda}_J \circ R^w_+$. 

Let $\calM \in D^b_{\textit hol}(\calD^{\lambda}_{G/P_I}\modu)$. 
\begin{align*}
R^{w}_{+}(\calM) & =  p_{1+}^{w}\big( \det(\Theta_{p^w_1}) \totimes p_2^{w!}(\calM)\big) \\
& = \bR p_{1*}^{w}\bigg(\big(p_1^{w*}\calD^{w^{-1}*\lambda, \textit{op}}_{G/P_J} \totimes \det(\Omega_{p^{w}_1})\big) \otimes^{\bL}_{\calD^{\lambda-\rho+w\rho}_{\bO_w}} \big(\det(\Theta_{p^w_1}) \totimes p^{w!}_{2}(\calM) \big) \bigg) \\
& \cong \bR p_{1*}^{w}\big( p_1^{w*}\calD^{w^{-1}*\lambda, \textit{op}}_{G/P_J} \otimes^{\bL}_{\calD^{\lambda}_{\bO_w}} p^{w!}_2(\calM) \big)
\end{align*}

Since $\calD^{w^{-1}*\lambda, \textit{op}}$ is a sheaf of rings, it has the section 1. Its pullback $p_1^{w!}\calD^{w^{-1}*\lambda, \textit{op}}_{G/P_J}$ also has a section induced from $1$. 
This section induces a morphism $p^{w!}_2\calM \to p^{w!}_{1}(\calD^{w^{-1}*\lambda, \textit{op}}) \otimes^{\bL}_{\calD^{\lambda}_{\bO_w}} p^{w!}_2\calM$.

We have the following sequence of morphisms of complex of vector spaces. 
\begin{align*}
 \bR\Gamma(\calM):= \bR\Gamma(G/P_I, \calM) \to \bR\Gamma(\bO_w, p^{w*}_2\calM) = \bR\Gamma(\bO_w, p^{w!}_2\calM) \cong \bR\Gamma(G/P_J, p^{w}_{1*} \circ p^{w!}_2\calM) \\
\to \bR\Gamma\big(G/P_J, p^w_{1*}((p^{w!}_{1}\calD^{w^{-1}*\lambda, \textit{op}})\otimes^{\bL}_{\calD^{\lambda}_{\bO_w}} p_2^{w!}\calM) \big) 
\cong \bR\Gamma(R^{w}_+\calM) 
\end{align*}
We denote by $I^w_+(\calM)$ the homomorphism given by the composition of these homomorphisms. 
Each of these maps is compatible with $\fg$-action. Thus we obtain a morphism of functors $I^w_+:\bR\Gamma^\lambda_I \to \bR\Gamma^{w^{-1}*\lambda}_J \circ R^w_+$. 
Since the functor $R^{w^{-1}}_!$ is inverse to $R^w_+$, we have $\bR\Gamma^{\lambda}_I\circ R^{w^{-1}}_! \to \bR\Gamma^{w^{-1}*\lambda}_J \circ R^{w}_+ \circ R^{w^{-1}}_! \cong \bR\Gamma^{w^{-1}*\lambda}_J$. 

Summarizing the above argument, we obtain the following proposition.

\begin{prop}\label{intmor}
We have natural morphism of functors $I^w_+: \bR\Gamma^{\lambda}_I \to \bR\Gamma^{w^{-1}*\lambda}_J \circ R^w_+$ 
and $I^w_!:\bR\Gamma^{\lambda}_I \circ R^{w^{-1}}_! \to \bR\Gamma^{w^{-1}*\lambda}_J$. 
\end{prop}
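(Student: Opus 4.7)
The plan is to construct $I^w_+$ as an explicit composite of canonical morphisms and then obtain $I^w_!$ formally from $I^w_+$ by composing with the equivalence of Theorem~\ref{thmequiv}. The reason the source and target of both morphisms live in the same category is that, by Proposition~\ref{sameideal}, the quotients ${\rm U}^\lambda_I$ and ${\rm U}^{w^{-1}*\lambda}_J$ of $\calU(\fg)$ coincide, as was already observed in the proof of Lemma~\ref{lemsurj}.

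For $I^w_+$, the first step is to rewrite
\[
R^w_+(\calM) \;\cong\; \bR p^w_{1*}\bigl(p_1^{w*}\calD^{w^{-1}*\lambda,\textit{op}}_{G/P_J} \otimes^{\bL}_{\calD^\lambda_{\bO_w}} p_2^{w!}\calM\bigr),
\]
by absorbing the twist $\det(\Theta_{p^w_1})$ inserted in the definition of $R^w_+$ against the $\det(\Omega_{p^w_1})$ factor hidden in the definition of $p^w_{1+}$. I would then assemble $I^w_+(\calM)$ as the composition of three canonical morphisms: (i) the adjunction pullback $\bR\Gamma(G/P_I,\calM) \to \bR\Gamma(\bO_w, p_2^{w!}\calM)$, using that $p_2^{w*} \cong p_2^{w!}$ as $\calO$-modules since $p^w_2$ is smooth (Proposition~\ref{nc}(i)); (ii) the identification $\bR\Gamma(\bO_w,-) \cong \bR\Gamma(G/P_J, \bR p^w_{1*}(-))$; and (iii) the map induced by the unit section $1$ of the sheaf of rings $\calD^{w^{-1}*\lambda,\textit{op}}_{G/P_J}$, producing $p_2^{w!}\calM \to p_1^{w*}\calD^{w^{-1}*\lambda,\textit{op}}_{G/P_J}\otimes^{\bL}_{\calD^\lambda_{\bO_w}}p_2^{w!}\calM$.

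Naturality of $I^w_+$ in $\calM$ is immediate from the naturality of each ingredient. To confirm that the composite actually lands in the target module category rather than merely in complexes of vector spaces, I would check $\calU(\fg)$-equivariance of each of the three constituent morphisms: the pullback of sections is equivariant because $\fg$ acts on global sections via the structure maps $\fg\to\Gamma(\Theta)$ which are compatible with any $G$-equivariant morphism, and the insertion of $1$ clearly commutes with the $\fg$-action coming from $\psi^{w^{-1}*\lambda}$.

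For $I^w_!$, the plan is to apply the just-constructed $I^w_+$ to the object $R^{w^{-1}}_!\calN$, producing
\[
\bR\Gamma^\lambda_I \circ R^{w^{-1}}_!(\calN) \;\longrightarrow\; \bR\Gamma^{w^{-1}*\lambda}_J \circ R^w_+ \circ R^{w^{-1}}_!(\calN) \;\cong\; \bR\Gamma^{w^{-1}*\lambda}_J(\calN),
\]
where the final isomorphism uses that $R^w_+$ and $R^{w^{-1}}_!$ are mutually inverse equivalences (Theorem~\ref{thmequiv} with $\mu=0$). The main obstacle I anticipate is really just careful bookkeeping of the twists, ensuring that the twisted differential operator sheaves align on both sides of every derived tensor product so each step sits in the intended category; beyond that, the construction is formal once Theorem~\ref{thmequiv} is in hand.
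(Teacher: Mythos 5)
Your construction is the same as the paper's: rewrite $R^w_+$ by absorbing $\det(\Theta_{p^w_1})$ against $\det(\Omega_{p^w_1})$, then build $I^w_+$ as (pullback of sections along $p^w_2$) $\circ$ (identification $f^!\cong f^*$ on $\calO$-modules) $\circ$ (pushforward identification along $p^w_1$) $\circ$ (insertion of the unit section $1$), check $\fg$-equivariance, and obtain $I^w_!$ formally from $I^w_+$ via Theorem~\ref{thmequiv}. One small slip: the isomorphism $p_2^{w!}\calM\cong p_2^{w*}\calM$ of $\calO$-modules is the remark following the definition of $f^!$, not Proposition~\ref{nc}(i) (which compares $f^+$ with $f^!$), but this does not affect the argument.
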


In the following we study when the morphism $I^w_+$ is an isomorphism. 

We first study the case where $\fp_I$ is a maximal parabolic subalgebra of $\fg$. 
The set $\Pi \setminus I$ consists of the unique element $\alpha$ and $(\fh/\fh_I)^*$ is a vector space of dimension one spanned by the fundamental weight $\varpi_\alpha$. 
In this case, $w$ is either identity of $W$ or $w=w^{I}_0 w^{\Pi}_0$. We set $v:= w^{I}_0 w^{\Pi}_0$ and $J:=v^{-1}I$. The $G$-orbit $\bO_v$ is open in $G/P_J \times G/P_I$. We have $\rho -v\rho=2\rho_{\fn_I}. $

\begin{lem}\label{RofD}
Assume that $G$ is a simple algebraic group and $\Pi \setminus I$ consists of one element. Let $v:=w^{I}_0 w^{\Pi}_0$ and $J:=v^{-1}I$. 

If $M^\fg_{\fp_J}(v^{-1}\lambda)$ is irreducible, then we have an isomorphism $\calD^{v^{-1}*\lambda}_{G/P_J} \cong R^v_+(\calD^{\lambda}_{G/P_I})$. 
\end{lem}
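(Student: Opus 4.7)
The plan is to apply the equivalence from Proposition \ref{wequivequiv} between weakly $G$-equivariant $\calD^{v^{-1}*\lambda}_{G/P_J}$-modules and twisted $(\fg, P_J)$-modules of twist $v^{-1}*\lambda$, thereby reducing matters to comparing the $\calO$-fibers at $eP_J$. Both $\calD^{v^{-1}*\lambda}_{G/P_J}$ and $R^v_+(\calD^\lambda_{G/P_I})$ carry weakly $G$-equivariant structures (the latter because $R^v_+$ is built from pullback, pushforward, and tensoring along $G$-equivariant data), so the comparison is a priori well-posed. By Proposition \ref{fiberGVM} the right-hand fiber is $M^\fg_{\fp_J}(v^{-1}*\lambda - 2\rho_{\fn_J})$, and in the maximal parabolic case at hand a direct weight computation --- using $v\rho_J = \rho_I$ together with $v\rho = \rho_I - \rho_{\fn_I}$ (valid because $W_I$ fixes $\rho_{\fn_I}$) to deduce $v^{-1}\rho_{\fn_I} = -\rho_{\fn_J}$ and hence $\rho - v^{-1}\rho = 2\rho_{\fn_J}$ --- shows that $v^{-1}*\lambda - 2\rho_{\fn_J} = v^{-1}\lambda$. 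Thus the fiber to be matched is exactly $M^\fg_{\fp_J}(v^{-1}\lambda)$, the generalized Verma module assumed irreducible.

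For the fiber of $R^v_+(\calD^\lambda_{G/P_I})$ at $eP_J$ I would use base change along $\{eP_J\} \hookrightarrow G/P_J$. The fiber $F_J := (p^v_1)^{-1}(eP_J) \subset \bO_v$ maps isomorphically via $p^v_2$ onto the big $\bar U_I$-cell in $G/P_I$, which is an affine space of dimension $\ell(v)$; on it the TDO $\calD^\lambda_{G/P_I}$ is untwisted because $H^2(F_J, \sigma^{\geq 1}\Omega^\bullet_{F_J})$ vanishes, letting us rewrite the relevant pushforward explicitly. Combining this trivialisation with the canonical section $1 \in \calD^\lambda_{G/P_I}$ and an equivariant trivialisation of $\det \Theta_{p^v_1}$ along $F_J$, I would construct a $\fg$-equivariant homomorphism $\varphi : M^\fg_{\fp_J}(v^{-1}\lambda) \to R^v_+(\calD^\lambda_{G/P_I})(eP_J)$; compatibility of the twist and of the $P_J$-action reduces to the weight identity already established.

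The main obstacle is to promote $\varphi$ to an isomorphism, together with the verification that $R^v_+(\calD^\lambda_{G/P_I})$ is concentrated in cohomological degree zero. Nonvanishing of $\varphi$ on the highest-weight line combined with the irreducibility hypothesis gives injectivity immediately. Surjectivity, and the concentration in degree zero, should follow from a formal-character comparison on the maximal torus: the right-hand side has the standard Verma character, while the left-hand side is computed via the $\bar U_J$-action on $F_J$ together with the shift coming from $p^v_{1+}$. The delicate bookkeeping of this cohomological shift and of the twist by $\det \Theta_{p^v_1} \cong \calL^{v\rho-\rho}_{\bO_v}$, along the non-proper fibration $p^v_1$, is where I expect the bulk of the technical difficulty to lie.
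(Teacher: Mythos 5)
Your proposal takes essentially the same route as the paper: reduce to a comparison of $\calO$-fibers at $eP_J$ via Proposition \ref{wequivequiv}, identify $\calD^{v^{-1}*\lambda}_{G/P_J}(eP_J)$ with $M^\fg_{\fp_J}(v^{-1}\lambda)$ using Proposition \ref{fiberGVM} and the weight identity $v^{-1}*\lambda-2\rho_{\fn_J}=v^{-1}\lambda$, compute $R^v_+(\calD^\lambda_{G/P_I})(eP_J)$ by base change along the affine fiber of $p^v_1$, and conclude from irreducibility together with a formal-character comparison. One correction to your geometric setup: $p^v_2$ sends $(p^v_1)^{-1}(eP_J)$ not onto $\bar U_I\cdot eP_I$ but onto the open $B$-orbit $C_{v^{-1}}=Bv^{-1}P_I/P_I$, whose distinguished point is $v^{-1}P_I$ rather than $eP_I$; these are distinct affine open cells, and the paper's computation that the section $1$ has $\fh$-weight $v^{-1}w^I_0(\lambda-\rho)-\rho=v^{-1}\lambda$ is anchored at $v^{-1}P_I$ — working at $eP_I$ would instead hand you a $\fp_I$-semi-invariant of weight $\lambda-2\rho_{\fn_I}$, which is not the $\fp_J$-highest-weight vector of weight $v^{-1}\lambda$ that your map $\varphi$ requires.
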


\begin{proof}
Since both are weakly $G$-equivariant $\calD^{v^{-1}*\lambda}_{G/P_J}$-modules, by Proposition \ref{wequivequiv} it is enough to check that their fibers are isomorphic to each other at the point $eP_J$. 

By Proposition \ref{fiberGVM}, we have an isomorphism $\calD^{v^{-1}*\lambda}_{P_J}(eP_J) \cong M^\fg_{\fp_J}(v^{-1}*\lambda-2\rho_{\fn_J})=M^\fg_{\fp_J}(v^{-1}\lambda)$. 

We consider the following diagram.
\[
\xymatrix{
eP_J \ar[d]_{i_e} & eP_J \times C_{v^{-1}} \ar[d]^{i_{ev^{-1}}} \ar[l]_p& \\
 G/P_J & \bO_v \ar[l]^{p^v_1} \ar[r]_{p^v_2} & G/P_I
}
\]
Taking a fiber at $eP_J$ is equivalent to applying $i^!_e$. 
We have
\begin{align}
R^{v}_+\calD^\lambda_{G/P_I}(eP_J) & \cong i^{!}_e p^{v+}_1 ( \det \Theta_{p^v_!} \totimes p^{v!}_2 \calD^\lambda_{G/P_I}) \label{isom31} \\
 & \cong p_+ i^!_{ev^{-1}} ( \det \Theta_{p^v_!} \totimes p^{v!}_2 \calD^\lambda_{G/P_I} )  \label{isom32}\\
 & \cong p_+ ( \det\Theta_{C_{v^{-1}}} \totimes i^!_{v^{-1}} \calD^\lambda_{G/P_I} ) \label{isom33} \\
 & = p_* ( \det\Omega_{C_{v^{-1}}} \otimes^\bL_{i^{\#}_{v^{-1}}\calD^{\lambda -\rho + v\rho}_{G/P_I}} (\det\Theta_{C_{v^{-1}}} \totimes i^!_{v^{-1}} \calD^\lambda_{G/P_I}) )  \label{isom34}  \\
 & \cong \Gamma(C_{v^{-1}}, \calO_{C_{v^{-1}}}). \label{isom35}
\end{align}
The isomorphism (\ref{isom32}) follows from the base change and the isomorphism (\ref{isom33}) follows from monoidal property of pullback. The isomorphism (\ref{isom35}) follows from the fact that $\det\Omega_{C_{v^{-1}}}$ and $\det\Theta_{C_{v^{-1}}}$ are mutually dual invertible sheaves. 

In the last term, the action of $\fg$ on $\calO_{C_{v^{-1}}}$ is via $i^{\#}_{v^{-1}}\calD^{\lambda}_{G/P_I}$. 
This $\fg$-module is $\fp_J$-finite. The section $1$ is of weight $v^{-1}w^I_0(\lambda -\rho ) -\rho = v^{-1}\lambda$ and the character of this module coincide with that of $M^{\fg}_{\fp_J}(v^{-1}\lambda) $. By the assumption  $M^\fg_{\fp_J}(v^{-1}\lambda)$ is irreducible and thus it is isomorphic to $M^\fg_{\fp_J}(v^{-1}\lambda)$. 
\end{proof}

Now we consider general $G$ and $I \subset \Pi$. 

Let $I, J \subset \Pi$ and $w \in W$ satisfy $I=wJ$. We fix $\alpha_1, \ldots, \alpha_r$ in Proposition \ref{BH} and let 
$I_0= I = v[\alpha_1, I_1]I_1,I_1 = v[\alpha_2, I_2]I_2, \ldots , I_{r-1} = v[\alpha_r, I_r]I_r , I_r=J$. 
By Proposition \ref{compose} we have an isomorphisms of functors $R^w_+ \cong R^{v[\alpha_r, I_r]}_+ \circ \cdots \circ R^{v[\alpha_1, I_1]}_+$. 

\begin{thm}\label{thmisom}
Let $\lambda \in (\fh/\fh_I)^*$. Let $\lambda_0 :=\lambda$ and $\lambda_i := v[\alpha_i, I_i]^{-1}*\lambda_{i-1}$. 
Assume that $\lambda$ is regular and for each $i$ the generalized Verma module $M^{\fl_{I_i \cup \{\alpha_i\}}}_{\fp^{I_i \cup \{\alpha_i\}}_{I_i}}({v[\alpha_i,I_i]^{-1}\lambda_{i-1}})$ is irreducible, then the morphism $I^w_+: \bR\Gamma^{\lambda}_I \to \bR\Gamma^{w^{-1}*\lambda}_J \circ R^w_{+}$ and $I^w_!:\bR\Gamma^{\lambda}_I \circ R^w_! \to \bR\Gamma^{w^{-1}*\lambda}_J$ are isomorphisms of functors. 
\end{thm}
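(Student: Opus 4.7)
The plan is to reduce the theorem to the elementary Brink--Howlett steps via Propositions \ref{BH} and \ref{compose}, then prove each elementary step by combining an analog of Lemma \ref{RofD} with the localization theorem (Proposition \ref{propequiv}). Since $R^w_+$ and $R^{w^{-1}}_!$ are mutually inverse by Theorem \ref{thmequiv}, the construction of $I^w_!$ in Proposition \ref{intmor} identifies it with $I^w_+$ applied after $R^{w^{-1}}_!$, so it suffices to show $I^w_+$ is an isomorphism. Using $R^w_+ \cong R^{v[\alpha_r, I_r]}_+ \circ \cdots \circ R^{v[\alpha_1, I_1]}_+$ (Proposition \ref{compose}), the transformation $I^w_+$ decomposes as a composition of elementary $I^{v[\alpha_i, I_i]}_+$ evaluated at the intermediate weights $\lambda_{i-1}$, all of which remain regular under the $*$-action. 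Everything thus reduces to the case $w = v[\alpha, I']$ under the hypothesis that the Levi-GVM $M^{\fl_{I' \cup \{\alpha\}}}_{\fp^{I' \cup \{\alpha\}}_{I'}}(v^{-1}\lambda)$ is irreducible.

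For this elementary case I first prove an analog of Lemma \ref{RofD} under the weaker Levi-irreducibility hypothesis: $R^v_+(\calD^\lambda_{G/P_I}) \cong \calD^{v^{-1}*\lambda}_{G/P_J}$, where $I = vI'$ and $J = I'$. As in Lemma \ref{RofD}, both sides are weakly $G$-equivariant, so by Proposition \ref{wequivequiv} it suffices to match fibers at $eP_J$. The fiber of $R^v_+(\calD^\lambda_{G/P_I})$ at $eP_J$ is computed by base change exactly as in the proof of Lemma \ref{RofD} to be a $\fg$-module generated by a vector of weight $v^{-1}\lambda$ with character matching $M^\fg_{\fp_J}(v^{-1}*\lambda - 2\rho_{\fn_J})$. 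Induction by stages realizes the latter as $\calU(\fg) \otimes_{\calU(\fp_{I' \cup \{\alpha\}})} M^{\fl_{I' \cup \{\alpha\}}}_{\fp^{I' \cup \{\alpha\}}_{I'}}(v^{-1}\lambda)$ with the nilradical $\fn_{I' \cup \{\alpha\}}$ acting trivially on the Levi factor. The cyclic $\fg$-submodule of the fiber generated by the highest-weight vector is a quotient of the $\fg$-GVM; the kernel of this quotient intersects the canonically embedded Levi-GVM in a proper $\fl_{I' \cup \{\alpha\}}$-submodule, which must vanish by the Levi-irreducibility hypothesis, and character matching then promotes this to an isomorphism between the $\fg$-GVM and the fiber. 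With $R^v_+(\calD^\lambda_{G/P_I}) \cong \calD^{v^{-1}*\lambda}_{G/P_J}$ established, $I^v_+(\calD^\lambda_{G/P_I})$ becomes a $\fg$-equivariant map ${\rm U}^\lambda_I \to {\rm U}^{v^{-1}*\lambda}_J$, both in degree zero by Proposition \ref{Dcohvan} and Lemma \ref{lemsurj} and equal as quotients of $\calU(\fg)$ by Proposition \ref{sameideal} (cf.\ the computation in Lemma \ref{lemsurj}). Tracing the construction of $I^v_+$ shows it is the canonical identification.

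Finally, this isomorphism on the generator $\calD^\lambda_{G/P_I}$ propagates to all of $D^-(\calD^\lambda_{G/P_I}\modu)$ via the localization theorem (Proposition \ref{propequiv}): every $\calM$ is quasi-isomorphic to $\bL\Delta_I$ of a free resolution of $\bR\Gamma^\lambda_I(\calM)$, so lies in the triangulated subcategory generated by $\calD^\lambda_{G/P_I}$ under direct sums. Both $\bR\Gamma^\lambda_I$ and $\bR\Gamma^{v^{-1}*\lambda}_J \circ R^v_+$ preserve distinguished triangles and commute with direct sums (the former since $G/P_I$ is proper, the latter because $R^v_+$ is a composition of $p^{w!}_2$, a tensor with an invertible sheaf, and the proper pushforward $p^w_{1+}$), so the isomorphism extends to all $\calM$. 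The principal obstacle is the Levi-irreducibility step in the middle paragraph: translating irreducibility of a module over the Levi subalgebra into an exact identification of the $\fg$-module fiber requires the induction-by-stages realization together with the observation that a proper $\fg$-submodule of the $\fg$-GVM must meet the canonically embedded Levi-GVM in a proper $\fl_{I' \cup \{\alpha\}}$-submodule.
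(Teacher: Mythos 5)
Your overall architecture mirrors the paper's: reduce to Brink--Howlett elementary steps, establish $R^{w}_+\calD^{\lambda}_{G/P_I}\cong\calD^{w^{-1}*\lambda}_{G/P_J}$, identify $I^w_+$ on the generator with the canonical identity ${\rm U}^\lambda_I\to{\rm U}^{w^{-1}*\lambda}_J$, and propagate through $D^-$ via the localization theorem. The divergence, and the problem, is in how you prove the key isomorphism of TDO's for an elementary step under the Levi-irreducibility hypothesis.

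The paper does \emph{not} compute the fiber of $R^{v}_+\calD^\lambda_{G/P_I}$ for the whole group directly. It restricts to the $\bar P_{I_i\cup\{\alpha_i\}}$-stable open cell $L_{I_i\cup\{\alpha_i\}}/P^{I_i\cup\{\alpha_i\}}_{I_i}\times\bar U_{I_i\cup\{\alpha_i\}}$, over which both TDO's and the whole Radon diagram factor as external products $(\text{Levi flag variety part})\boxtimes(\text{affine space part})$; the Radon transform acts only in the Levi factor, so Lemma~\ref{RofD} applied to the (essentially simple) Levi group $L_{I_i\cup\{\alpha_i\}}$ gives the isomorphism on the cell, and $G$-weak-equivariance plus Proposition~\ref{wequivequiv} globalizes it. This is exactly where the theorem's Levi hypothesis is consumed, and it does so on the Levi flag variety, where Lemma~\ref{RofD} is applicable verbatim.

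Your proposal instead tries to run the fiber computation for $G$ itself and bridge to the Levi via representation theory, and this bridge does not close. Two distinct readings of your middle paragraph, both problematic: (i) you assert that the fiber is \emph{generated} as a $\fg$-module by the section of weight $v^{-1}\lambda$; this is not established in Lemma~\ref{RofD}, whose proof instead uses irreducibility of $M^\fg_{\fp_J}(v^{-1}\lambda)$ together with character matching, and in the present setting that GVM need not be irreducible; or (ii) you let $V=M^\fg_{\fp_J}(v^{-1}\lambda)/K$ be the cyclic submodule and argue $K\cap(\text{Levi-GVM})=0$ forces $K=0$. The latter implication is false: for $\fg=\fs\fl_3$ with $I'=\emptyset$ and Levi $\fl_{\{\alpha\}}$, a singular vector $f_\beta v_+$ in the $\fg$-Verma module generates a nonzero $\fg$-submodule whose weights all have strictly positive $\beta$-component, hence it meets $\calU(\bC f_\alpha)v_+$ only in $0$, even though the $\fs\fl_2$-Verma (the Levi-GVM) is irreducible. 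Your closing remark, that ``a proper $\fg$-submodule of the $\fg$-GVM must meet the canonically embedded Levi-GVM in a proper $\fl_{I'\cup\{\alpha\}}$-submodule,'' is only tautologically true if ``proper'' is allowed to be zero, and that weaker statement is not sufficient. What is actually needed is that the fiber is free over $\calU(\bar\fn_{I'\cup\{\alpha\}})$ with cofactor the Levi fiber, which is precisely what the external-product decomposition on the open cell supplies and which your argument lacks. A secondary, smaller point: you decompose the natural transformation $I^w_+$ itself into elementary pieces $I^{v[\alpha_i,I_i]}_+$; this is plausible but not verified, and the paper sidesteps it by checking $I^w_+(\calD^\lambda_{G/P_I})$ directly from Proposition~\ref{Dcohvan}, Lemma~\ref{lemsurj}, and Proposition~\ref{sameideal} once $R^w_+\calD^\lambda_{G/P_I}\cong\calD^{w^{-1}*\lambda}_{G/P_J}$ is in hand.
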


Note that the each of the generalized Verma modules in the theorem is a tensor product of a generalize Verma module for some simple Lie algebra induced from a maximal parabolic subalgebra and a one dimensional representation. He, Kubo and Zierau give in \cite{HeKubZie??} a complete list of reducible parameters for such generalized Verma modules. Thus given $\lambda \in (\fh/\fh_I)^*$, we can determine whether $\lambda$ satisfies the assumption of the theorem by explicit computation. 

\begin{proof}
Since the functor $R^{w^{-1}}_!$ is an inverse of $R^w_+$, 
it is enough to show that $I^w_+: \bR\Gamma^{\lambda}_I \to \bR\Gamma^{w^{-1}*\lambda}_J \circ R^w_{+}$ is an isomorphism. 

We first prove that  $R^w_+\calD^{\lambda}_{G/P_I}$ is isomorphic to $\calD^{w^{-1}*\lambda}_{G/P_J}$. 

We use an argument similar to the one in Theorem \ref{thmequiv}.

Let $i$ be an integer satisfying $1 \leq i \leq r$. 
Over the open subvariety  $L_{I_i \cup \{\alpha_i\}}/P^{I_i \cup \{\alpha_i\}}_{I_i} \times \bar{U}_{I_{i} \cup \{ \alpha_i\}} $ of $G/P_{I_{i}}$, the diagram of the Radon transform $R^{v[\alpha_i,I_i]}_+$ is isomorphic to 
\begin{align*}
L_{I_i \cup \{\alpha_i\}}/P^{I_i \cup \{\alpha_i\}}_{I_i} \times \bar{U}_{I_i \cup \{\alpha_i\}} \leftarrow (p^{v[\alpha_i, I_i]}_1)^{-1}(L_{I_i \cup \{\alpha_i\}}/P^{I_i \cup \{\alpha_i\}}_{I_i} \times \bar{U}_{I_i \cup \{\alpha_i\}}) 
=\\
(p^{v[\alpha_i, I_i]}_2)^{-1}(L_{I_{i-1}\cup \{\alpha'_i\}}/P^{_{I_{i-1}\cup \{\alpha'_i\}}}_{I_{i-1}} \times \bar{U}_{I_{i-1}\cup \{\alpha'_i\}}) \rightarrow L_{I_{i-1}\cup \{\alpha'_i\}}/P^{_{I_{i-1}\cup \{\alpha'_i\}}}_{I_{i-1}} \times \bar{U}_{I_{i-1}\cup \{\alpha'_i\}}.
\end{align*}
Here $\alpha'_i$ is the simple root such that $\{\alpha'_i\} = ( I_i \cup \{\alpha_i \}) \setminus I_{i-1}$ holds. 

We have the following isomorphism of TDO's. 
\[\calD^{\lambda_{i-1}}_{G/P_{I_{i-1}}} \vert_{L_{I_{i-1}\cup \{\alpha'_i\}}/P^{_{I_{i-1}\cup \{\alpha'_i\}}}_{I_{i-1}} \times \bar{U}_{I_{i-1}\cup \{\alpha'_i\}}}  \cong \calD^{\langle \lambda_{i-1},\check\alpha'_i \rangle \varpi_{\alpha'_{i}}}_{ L_{I_{i-1} \cup \{\alpha'_{i}\}}/P^{I_{i-1} \cup \{\alpha'_{i}\}}_{I_{i-1}}} \boxtimes \calD_{\bar{U}_{I_{i-1} \cup \{\alpha'_i\}}}\]
\[\calD^{\lambda_i}_{G/P_{I_i}} \vert_{L_{I_i \cup \{\alpha_i\}}/P^{I_i \cup \{\alpha_i\}}_{I_i} \times \bar{U}_{I_i \cup \{\alpha_i\}}} \cong \calD^{\langle \lambda_i, \check\alpha_i \rangle\varpi_{\alpha_i}}_{L_{I_i \cup \{\alpha_i\}}/P^{I_i \cup \{\alpha_i\}}_{I_i}} \boxtimes \calD_{\bar{U}_{I_i \cup \{\alpha_i\}}}\]
Applying the intertwining functor, we obtain 
\[ R^{v[\alpha_i, I_i]}_+\calD^{\lambda_{i-1}}_{G/P_{I_{i-1}}}  \vert_{L_{I_{i-1}\cup \{\alpha'_i\}}/P^{_{I_{i-1}\cup \{\alpha'_i\}}}_{I_{i-1}} \times \bar{U}_{I_{i-1}\cup \{\alpha'_i\}}}  \cong  R^{v[\alpha_i, I_i]}_+\calD^{\langle \lambda_{i-1},\check\alpha'_i \rangle \varpi_{\alpha'_{i}}}_{ L_{I_{i-1} \cup \{\alpha'_{i}\}}/P^{I_{i-1} \cup \{\alpha'_{i}\}}_{I_{i-1}}} \boxtimes \calD_{\bar{U}_{I_{i-1} \cup \{\alpha'_i\}}}.\]
By Lemma \ref{RofD}, we have an isomorphism 
$R^{v[\alpha_i, I_i]}_+\calD^{\lambda_{i-1}}_{G/P_{I_{i-1}}} \cong \calD^{\lambda_{i}}_{G/P_{I_{i}}}$ on the open subset of $G/P_{I_i}$. By the weak equivariance of both sides and Proposition \ref{wequivequiv}, we see that they are isomorphic to each other on whole $G/P_{I_i}$.

Let $\calM \in D^b(\calD^{\lambda}_{G/P_I}\modu)$. 
Take a free resolution $M$ of $\bR\Gamma(\calM)$ in $D^-(\Gamma(\calD^{\lambda}_{G/P_I})\modu)$. 
Then by Proposition \ref{propequiv}, we have an isomorphism $\Delta_I(M) \cong \calM$ in $D^-(\calD^{\lambda}_{G/P_I}\modu)$. 
The morphism $I^w_+(\calD^\lambda_{G/P_I}) : \bR\Gamma^\lambda_I \calD^\lambda_{G/P_I} \to \bR\Gamma^{w^{-1}*\lambda}_J \circ R^w_{+}\calD^\lambda_{G/P_I} \cong \bR\Gamma^{w^{-1}*\lambda}_J\calD^{w^{-1}*\lambda}_{G/P_J}$ is an isomorphism by Proposition \ref{Dcohvan}, Lemma \ref{lemsurj} and Proposition \ref{sameideal}.
This implies that $I^w_+(\Delta_I(M))$ is an isomorphism. We conclude that $I^w_+(\calM) : \bR\Gamma^{\lambda}_I \calM \cong  \bR\Gamma^{\lambda}_I \circ \Delta_I(M) \to \bR\Gamma^{w^{-1}*\lambda}_J \circ R^w_{+} \circ\Delta_I(M) \cong \bR\Gamma^{w^{-1}*\lambda}_J \circ R^w_{+} \calM$ is an isomorphism. 
\end{proof}

\end{document}